\def\cleardoublepage{\clearpage\if@twoside \ifodd\c@page\else%
         \hbox{}%
     \thispagestyle{empty}
     \newpage%
     \if@twocolumn\hbox{}\newpage\fi\fi\fi}
\let\cleardoublepage\clearpage
\newtheorem{thm}{Theorem}[section]
\newtheorem{lem}[thm]{Lemma}
\newtheorem{pro}[thm]{Proposition}
\newtheorem{den}[thm]{Definition}
\newtheorem{oss}[thm]{Remark}
\numberwithin{equation}{section}
\newcommand{\m}{\mathsf{m}}
\begin{document}

\title[]{Blow-up and global existence \\ for the porous medium equation with reaction \\ on a class of Cartan-Hadamard manifolds}

\author {Gabriele Grillo} \author[Matteo Muratori]{Matteo Muratori}
\author{Fabio Punzo}

\address {Gabriele Grillo, Matteo Muratori, Fabio Punzo: Dipartimento di Matematica, Politecnico di Milano, Via Bonardi 9, 20133 Milano, Italy}
\email{gabriele.grillo@polimi.it}
\email{matteo.muratori@polimi.it}
\email{fabio.punzo@polimi.it}

\subjclass[2010]{Primary: 35R01. Secondary: 35K65, 58J35, 35A01, 35A02, 35B44}
\keywords{Porous medium equation; Cartan-Hadamard manifolds; large data; a priori estimates; blow-up.}

%
%
%
%


\maketitle

\scriptsize
\noindent{\bf Abstract}. We consider the porous medium equation with power-type reaction terms $u^p$ on negatively curved Riemannian manifolds, and solutions corresponding to bounded, nonnegative and compactly supported data. If $p>m$, small data give rise to global-in-time solutions while solutions associated to large data blow up in finite time. If $p<m$, large data blow up at worst in infinite time, and under the stronger restriction $p\in(1,(1+m)/2]$ all data give rise to solutions existing globally in time, whereas solutions corresponding to large data blow up in infinite time. The results are in several aspects significantly different from the Euclidean ones, as has to be expected since negative curvature is known to give rise to faster diffusion properties of the porous medium equation.

\normalsize


\section{Introduction}
We consider solutions to the nonlinear evolution problem
\begin{equation}\label{eq1}
\begin{cases}
u_t \,=\, \Delta (u^m)+ u^p & \textrm{in}\;\; M\times (0,T)\,, \\
u \,=\, u_0 & \textrm{in}\;\; M\times \{0\}\,,
\end{cases}
\end{equation}
where $M$ is an $N$-dimensional complete, simply connected Riemannian manifold with nonpositive sectional curvatures (namely a \emph{Cartan-Hadamard manifold}) and $\Delta$ is the Laplace-Beltrami
operator on $M$, $m>1, p>1, T\in (0, \infty]$. The assumption $m>1$ corresponds to consider the \it slow diffusion \rm case, see \cite{Vaz07}. The initial datum $u_0$ will be always assumed to be nonnegative and bounded. We shall always suppose, without further comment, that $N\ge2$.

We shall concentrate on the situation in which the curvature bounds
\[
\textrm{Ric}_o(x)\leq - (N-1)  h^2 \qquad\textrm{or}\qquad \textrm{Ric}_o(x)\geq - (N-1)  k^2
\]
hold for some $h,k>0$, where $\textrm{Ric}_o(x)$ is the
\emph{Ricci curvature} at $x$ in the radial direction
$\frac{\partial}{\partial r}$. In particular a global negative curvature condition like the one  above,  involving sectional curvatures,  implies that the spectrum of $-\Delta$ on $M$ is bounded away from zero, see \cite{M}, this being in sharp contrast with the Euclidean setting and resembling, to some extent, the case of a \it bounded \rm Euclidean domain. In fact, the basic example we have in mind is the \it hyperbolic space \rm $\mathbb{H}^n_h$, namely the complete, simply connected manifold with sectional curvatures everywhere equal to $-h^2$. It is known that, on $\mathbb{H}^n_h$, Brownian motion, associated to $-\Delta$ by a standard procedure, has an expected speed which is \it linear \rm for large times (see e.g. \cite[Cor. 5.7.3]{D}), hence diffusion occurs at a much \it faster \rm rate than the one typical of the Euclidean situation.

The behaviour of solutions to the \it porous medium \rm or \it fast diffusion \rm equations
\begin{equation}\label{pme}
u_t \,=\, \Delta (u^m) \quad \textrm{in}\;\; M\times (0,T)\,.
\end{equation}
has been the subject of recent results (see e.g. \cite{GMhyp}, \cite{GM2}, \cite{GMPbd}, \cite{GMPrm}, \cite{GMV}, \cite{Pu1}, \cite{Pu2}, \cite{PuAA},  \cite{VazH}). That even \it nonlinear \rm diffusion gives rise to speedup phenomena can be seen at least in two different ways, see \cite{VazH} for $M=\mathbb H^N$, and \cite{GMV} for extensions on some more general manifolds. First, the $L^\infty$ norm of a solution corresponding to a compactly supported datum obeys the law $\|u(t)\|_\infty\asymp \left(\frac{\log t}t\right)^{1/(m-1)}$ as $t\to+\infty$, which is \it faster \rm than the corresponding Euclidean bound, the latter being given in term of a power $t^{-\alpha(N,m)}$ with $\alpha(N,m)=(m-1+(2/N))^{-1}<1/(m-1)$. Besides, the volume $\mathsf{V}(t)$ of the support of the solution $u(t)$ satisfies $\mathsf{V}(t)\asymp t^{1/(m-1)}$ while in the Euclidean situation one has $\mathsf{V}(t)\asymp t^{\beta(N,m)}$ with $\beta(N,m)=n(2+n(m-1))^{-1}=<1/(m-1)$.

The behaviour of solutions to problem \eqref{eq1} is therefore determined by competing phenomena: the diffusion pattern associated to $-\Delta$, the reaction due to the power source, and the (slow, but faster than in the Euclidean case) diffusion properties of the porous medium equation $u_t=\Delta(u^m)$. In fact, in the case of linear diffusion ($m=1$) it is known (see \cite{BPT}, \cite{WY}, \cite{WY2}, \cite{Pu3}) that, when $M=\mathbb{H}^N$, for all $p>1$ and sufficiently small nonnegative data there exists a global in time solution. The situation is different in $\mathbb R^N$: indeed, blowup occurs for all nontrivial nonnegative data when $p\le1+2/N$, while global existence prevails for $p>1+ 2/N$ (for more specific results, see e.g. \cite{CFG}, \cite{DL}, \cite{F}, \cite{FI}, \cite{H}, \cite{L}, \cite{Q}, \cite{S}, \cite{W}, \cite{Y}.)

To understand more precisely the differences between the Euclidean results and the ones proved in the present paper, let us first summarize qualitatively some of the former ones, quoting from \cite{SGKM}. For subsequent, more detailed results see e.g. \cite{GV}, \cite{MQV}, \cite{Vaz1} and references quoted therein.

\smallskip

\noindent \bf The case $M=\mathbb{R}^N$\rm. We suppose that the initial datum is \it nonnegative, nontrivial and compactly supported\rm. In this case we have:
\begin{itemize}
\item (\cite[Th. 1, p. 216]{SGKM}) For any $p>1$, all sufficiently large data give rise to solutions blowing up in finite time;
\item (\cite[Th. 2, p. 217]{SGKM}) if $p\in\left(1,m+\frac2N\right)$, \it all \rm data give rise to solutions blowing up in finite time;
\item (\cite[Th. 3, p. 220]{SGKM}) if $p>m+\frac2N$, all sufficiently small data give rise to solutions existing globally in time.

\end{itemize}
Let us mention that further nonexistence results for quasilinear parabolic equations, also involving $p$-Laplace type operators, have been obtained in \cite{MP}, \cite{MP2}, \cite{PT} (see also \cite{MMP} for the case of Riemannian manifolds). Moreover, in \cite{Sacks} problem
\[\begin{cases}
u_t \,=\, \Delta (u^m)+ \lambda u^p & \textrm{in}\;\; \Omega\times (0,T)\,, \\
u \,=\, 0 & \textrm{on}\;\; \partial \Omega\times (0,T)\,,\\
u \,=\, u_0 & \textrm{in}\;\; \Omega\times \{0\}\,,
\end{cases}
\]
where $\Omega$ is a bounded domain of $\mathbb R^N$ and $\lambda$ is a positive parameter, has been studied. Let $\lambda_1(\Omega)$ be the first eigenvalue of $-\Delta$ in $\Omega$, completed with homogeneous Dirichlet boundary conditions. It is shown that (see \cite[Theorem 1.3]{Sacks}) there exists a global solution for any $u_0\in L^q(\Omega), q>1$, and for any $\lambda\leq \lambda_1(\Omega).$ In addition, when $p>m$, or $p=m$ and $\lambda>\lambda_1(\Omega)$ (see \cite[Section 4]{Sacks}), then depending on the initial datum $u_0$, solutions may or may not exist for all times. Analogous results can also be found in \cite[Chapter VII]{SGKM}.

Existence of global solutions and blow-up in finite time for problem \eqref{eq1} have been studied in \cite{Z}, if the volume of geodesic balls grows as a power of the radius $R$, namely as $R^{\alpha}$ with $\alpha\geq 2$; this occurs, in particular, when Ricci curvature is nonnegative. However, we should note that such a condition tipically is not satisfied in our setting, in which the volume of geodesic balls can grow exponentially or faster with respect to the radius.
In particular, in \cite{Z} it is proved that if $m<p\leq m+2/{\alpha}$, then problem \eqref{eq1} does not have global (nontrivial) solutions. Instead, if $\alpha=N, p>m+2/N$, under a suitable assumption on the metric tensor, there exists a global solution of \eqref{eq1}, for some $u_0.$ Such results extend some of those in \cite{SGKM} to general Riemannian manifolds.

The situation on negatively curved manifolds is significantly different, as we now briefly summarize by singling out qualitatively some of our results.

\smallskip

\noindent \bf The case of a Cartan-Hadamard manifold $M$\rm. We suppose that the initial datum is \it nonnegative, nontrivial and compactly supported\rm. In this case we have:
\begin{itemize}
\item (see Theorems \ref{super p>m}, \ref{eigen p>m}) If $p>m$ and upper curvature bounds (see \eqref{H3}) hold, all sufficiently small data give rise to solutions existing globally in time. Besides, a class of sufficiently small data shows propagation properties \it identical \rm to the ones valid for the unforced porous medium equation \eqref{pme}. Moreover, small data non necessarily with compact support and possibly with arbitrarily large $L^p$ norms ($p\ge1$) give rise to solutions existing globally in time if $p>m$, and also if $p=m$ and a suitable curvature bound holds;
\item (see Theorem \ref{sub p>m}) If $p>m$ and lower curvature bounds (see \eqref{H2}) hold, all sufficiently large data give rise to solutions blowing up in finite time;
\item (see Theorem \ref{super p<m}). If $p\in\left(1,\frac{1+m}{2}\right]$ and upper curvature bounds (see \eqref{H3}) hold, all data exist globally in time;
\item (see Theorem \ref{sub p<m}). If $p\in(1,m)$ and lower curvature bounds (see \eqref{H2}) hold, all sufficiently large data give rise to solutions blowing up at worst in infinite time.

\end{itemize}

Thus the overall picture is considerably different from the Euclidean one, on the one hand since the main critical exponent turns out to be $p=m$, on the other hand since a completely new phenomenon, namely existence of solutions blowing up in infinite time, appears when $p\le(1+m)/2$. We do not know, and leave these as  challenging open problems, whether for $p\in\left(\frac{1+m}{2},m\right)$ solutions corresponding to small initial data exist for all time and if solutions corresponding to large data blow up in finite or infinite time.

The paper is organized as follow. Section 2 contains some geometric preliminaries, the relevant notation, a concise discussion of Laplacian comparison in Riemannian geometry, and, finally, a brief discussion of local existence of solution to \eqref{eq1} and comparison principles. Section 3 contains the statements of our main results. Section 4 contains two general auxiliary lemmas, that will be repeatedly used in the construction of the barriers we need in the proofs of our main results. Such proofs are contained in Section 5.

\section{Preliminaries}
\subsection{Notations from Riemannian geometry}
Let $M$ be a complete noncompact Riemannian manifold. Let $\Delta$
denote the Laplace-Beltrami operator, $\nabla$ the Riemannian
gradient and $d\mathcal{V}$ the Riemannian volume element on $M$.


We consider \emph{Cartan-Hadamard} manifolds, i.e.~complete, noncompact, simply connected Riemannian manifolds with nonpositive
sectional curvatures everywhere. Observe that on Cartan-Hadamard manifolds the \emph{cut locus} of any point $o$ is empty \cite{Grig,Grig3}.
Hence, for any $x\in M\setminus \{o\}$ one can define its {\it polar coordinates} with pole at $o$, namely $r(x) := d(x, o)$ and $\theta\in \mathbb S^{N-1}$. If we denote by $B_R$ the Riemannian ball of radius $R$ centred at $o$ and $ S_R:=\partial B_R $, there holds
\[
\textrm{meas}(S_R)\,=\, \int_{\mathbb S^{N-1}}A(R, \theta) \, d\theta^1d \theta^2 \ldots d\theta^{N-1}\,,
\]
for a specific positive function $A$ which is related to the metric tensor, \cite[Sect. 3]{Grig}. Moreover, it is direct to see that the Laplace-Beltrami operator in polar coordinates has the form
\begin{equation}\label{eq3}
\Delta \,=\, \frac{\partial^2}{\partial r^2} + \m (r, \theta) \, \frac{\partial}{\partial r} + \Delta_{S_{r}} \, ,
\end{equation}
where $\m(r, \theta):=\frac{\partial }{\partial r}(\log A)$ and $ \Delta_{S_{r}} $ is the Laplace-Beltrami operator on $ S_{r} $.  Thanks to \eqref{eq3}, we have that
$$ \m(r,\theta) =\Delta r(x)\quad \textrm{for every}\,\, x\equiv (r, \theta)\in M\setminus\{o\}\,.$$

Let $$\mathcal A:=\left\{f\in C^\infty((0,\infty))\cap C^1([0,\infty)): \, f'(0)=1, \, f(0)=0, \, f>0 \ \textrm{in}\;\, (0,\infty)\right\} .$$ We say that $M$ is a {\it spherically symmetric manifold} or a {\it model manifold} if the Riemannian metric is given by
\[
ds^2 \,=\, d r^2+\psi(r)^2 \, d\theta^2,
\]
where $d\theta^2$ is the standard metric on $\mathbb S^{N-1}$ and $\psi\in \mathcal A$. In this case, we shall write $M\equiv M_\psi$; furthermore, we have $A(r,\theta)=\psi(r)^{N-1} \, \eta(\theta) $ for a suitable angular function $\eta$, so that
\[
\Delta \,=\, \frac{\partial^2}{\partial r^2} + (N-1) \, \frac{\psi'}{\psi} \, \frac{\partial}{\partial r} + \frac1{\psi^2} \, \Delta_{\mathbb S^{N-1}} \, .
\]
Note that $\psi(r)=r$ corresponds to $M=\mathbb R^N$, while $\psi(r)=\sinh r$ corresponds to $ M=\mathbb H^N $, namely the $N$-dimensional hyperbolic space.

For any $x\in M\setminus\{o\}$, we denote by $\textrm{Ric}_o(x)$ the
\emph{Ricci curvature} at $x$ in the radial direction
$\frac{\partial}{\partial r}$. Let $\omega$ be any pair of tangent
vectors from $T_x M$ having the form $\big(\frac{\partial}{\partial r}
, V \big)$, where $V$ is a unit vector orthogonal to
$\frac{\partial}{\partial r}$. We denote by $\textrm{K}_{\omega}(x)$ the \emph{sectional curvature} at $x\in M$ of the $2$-section determined by $\omega$.

\subsection{Laplacian comparison}

Let us recall some crucial Laplacian comparison results. It is by now classical (see e.g.~\cite{GW} and \cite[Section
15]{Grig}) that if
\[
\textrm{Ric}_{o}(x) \geq -(N-1) \, k^2 \quad \textrm{for all } x\equiv(r,\theta)\in M\setminus\{o\}
\]
for some $k>0$, then
\[
\m(r, \theta) \leq (N-1) k \coth(kr) \quad \textrm{for all}\;\; r>0, \theta \in \mathbb S^{N-1}\,.
\]
So, in particular,
\begin{equation}\label{eq12}
\m(r, \theta) \leq (N-1) k \coth(k) \quad \textrm{for all}\;\; r\geq 1, \theta \in \mathbb S^{N-1}\,,
\end{equation}
since $r\mapsto \coth r$ is decreasing. On the other hand, if
\begin{equation}\label{eq6}
\textrm{Ric}_{o}(x)\leq - (N-1)h^2\quad \textrm{for all } x \equiv (r,\theta)\in M\setminus\{o\}
\end{equation}
for some $h>0$, then (see \cite [Theorem 2.15]{Xin})
\begin{equation}\label{eq13}
\m(r, \theta) \geq (N-1) h\,\coth(hr)\ge (N-1) h  \quad \textrm{for all}\;\; r >0, \theta \in \mathbb S^{N-1}\,
\end{equation}
(the second inequality is merely due to the fact that $\coth(r)\geq 1$ for all $r>0$). Let us observe that the latter implication is based upon the assumption that $M$ is a Cartan-Hadamard manifold.
Indeed, on general Riemannian manifolds with a pole $o$, namely with a point $o\in M$ having empty cut locus, inequality \eqref{eq13} is valid, provided that
\[\textrm{K}_{\omega}(x)\leq -h^2\quad \textrm{for all } x \equiv (r,\theta)\in M\setminus\{o\}\,.
\]
Clearly, \eqref{eq6} is a weaker condition than the previous one concerning the sectional curvature.

In the special case of a model manifold $M_\psi$, for any $x\equiv(r, \theta)\in M_\psi\setminus\{o\}$ we have
\[
\textrm{K}_{\omega}(x)=-\frac{\psi''(r)}{\psi(r)} \, , \quad \textrm{Ric}_{o}(x)=-(N-1) \, \frac{\psi''(r)}{\psi(r)} \, .
\]
In particular, as $\psi\in \mathcal A$, the condition $ \psi''\geq 0 $ in $(0, \infty)$ is necessary and sufficient for $ M_\psi $ to be a Cartan-Hadamard manifold. Finally, note that for any Cartan-Hadamard manifold we have $\textrm{K}_{\omega}(x)\leq 0$, therefore the Laplace comparison theorem easily gives that
\[
\m(r, \theta) \geq \frac{N-1}{r} \quad \textrm{for any } x \equiv (r, \theta) \in M \setminus \{o\}\,.
\]

\subsection{Main assumptions and consequences}
Throughout the paper we shall work under the following assumption:

\begin{equation} \label{H1}
M \ \textrm{is a Cartan-Hadamard manifold of dimension $N\ge2$}.
\end{equation}

\medskip Besides, one or both the following curvature bounds will be required:
\begin{equation} \label{H2}
\textrm{Ric}_o(x)\geq - (N-1)  k^2 \ \textrm{for some } k>0\, ;
\end{equation}
\begin{equation} \label{H3}
\textrm{Ric}_o(x) \leq -  (N-1)h^2 \ \textrm{for some } h>0\, .
\end{equation}

\subsection{Local existence and comparison}\label{existence}

In this brief section we first give a precise meaning to the concept of solution to \eqref{eq1} we shall deal with, and then establish some elementary existence results and comparison principles, which are essential to be able to exploit all of the barrier functions we provide below.

\begin{den}\label{def-sol-vw}
Let $ u_0 \in L^\infty(M) $, with $ u_0 \ge 0 $. Let $T>0$ and $ p,m>1 $. We say that a nonnegative function $ u \in L^\infty(M \times (0,S)) $ (for all $ S<T $) is a (very weak) solution to problem \eqref{eq1} if it satisfies
\begin{equation}\label{eq-vw}
-\int_M \int_0^T u \, \varphi_t \, dt \, d\mathcal{V} = \int_M u_0(x) \, \varphi(x,0) \, d\mathcal{V}(x) + \int_M \int_0^T \left( u^m \, \Delta \varphi + u^p \, \varphi \right) dt \, d\mathcal{V}
\end{equation}
for all nonnegative $ \varphi \in C^\infty_c(M\times[0,T)) $.

Similarly, we say that a nonnegative function $ u \in L^\infty(M \times (0,S)) $ (for all $ S<T $) is a (very weak) subsolution [supersolution] to problem \eqref{eq1} if it satisfies \eqref{eq-vw} with ``$=$'' replaced by ``$\le$'' [``$ \ge $''].
\end{den}

In order to give a (rather standard) local existence result, let us briefly discuss about \emph{minimal solutions}. To this end, we first need to introduce the auxiliary problems (let $ R>0 $)
\begin{equation}\label{approx-BR}
\begin{cases}
u_t = \Delta\!\left( u^m \right) + u^p & \text{in } B_R \times (0,T) \, , \\
u = 0 & \text{on } \partial B_R \times (0,T) \, , \\
u(\cdot,0) = u_0 & \text{in } B_R \, .
\end{cases}
\end{equation}
\begin{den}
Let $ u_0 \in L^\infty(B_R) $, with $ u_0 \ge 0 $. Let $T>0$ and $ p,m>1 $. We say that a nonnegative function $ u \in L^\infty(B_R \times (0,S)) $ (for all $ S<T $) is a (very weak) solution to problem \eqref{approx-BR} if it satisfies
\begin{equation}\label{eq-vwR}
-\int_{B_R} \int_0^T u \, \varphi_t \, dt \, d\mathcal{V} = \int_{B_R} u_0(x) \, \varphi(x,0) \, d\mathcal{V}(x) + \int_{B_R} \int_0^T \left( u^m \, \Delta \varphi + u^p \, \varphi \right) dt \, d\mathcal{V}
\end{equation}
for all nonnegative $ \varphi \in C^\infty(\overline{B}_R\times[0,T])$ with $\varphi=0$ on $\partial B_R$ for all $t\in[0,T]$ and at $t=T$.

Similarly, we say that a nonnegative function $ u \in L^\infty(B_R \times (0,S)) $ (for all $ S<T $) is a (very weak) subsolution [supersolution] to problem \eqref{approx-BR} if it satisfies \eqref{eq-vwR} with ``$=$'' replaced by ``$\le$'' [``$ \ge $''].
\end{den}
Note that problem \eqref{approx-BR} admits a nonnegative solution $ u_R \in L^\infty(B_R \times (0,S))$, for all $S< T_R$, where $T_R$ is the maximal existence time, i.e. $\|u_R(t)\|_{\infty}\to +\infty$ as $t\to T_R^-$; moreover, for problem \eqref{approx-BR} the comparison principle between sub-- and supersolutions holds (see \cite{ACP}). Observe that $ T_R\geq T$ for any $R>0$, where $T$ can be quantified depending on the initial datum $ u_0 $ by simple comparison with the solution of the associated ODE:
\[
\begin{cases}
x^\prime = x^p \, , \\
x(0)=\| u_0 \|_\infty \, ,
\end{cases}
\]
that is
\begin{equation}\label{approx-BR-1}
u_R(x,t) \le \frac{\| u_0 \|_\infty}{\left( 1-(p-1) \| u_0 \|_\infty^{p-1} \, t \right)^{\frac{1}{p-1}}} \qquad \Longrightarrow \qquad T_R \ge T:= \frac{1}{(p-1) \| u_0 \|_\infty^{p-1}} \, .
\end{equation}
Moreover, such a solution is unique. In particular, $ u_{R} \le u_{R+1} $, so that the family $ u_R $ is monotone increasing and, thanks to the upper bound \eqref{approx-BR-1}, it converges as $ R \to \infty $ to some solution $ u $ to \eqref{eq1}. Such a solution is necessary smaller than any other solution, due again to comparison on balls (see also Proposition \ref{thm:cp1} below). In this sense it is referred to as \emph{minimal}. We can define the \emph{maximal existence time} $T$ of $ u $ as the supremum over all $ S > 0 $ for which $\lim_{R\to\infty} u_R \in L^\infty(M \times (0,S)) $: note that $u$ does solve \eqref{eq1}, at least in the sense of Definition \ref{def-sol-vw}, up to such time.

\smallskip

As a reference for these results, we quote e.g.~\cite{ACP}, where in fact the authors mainly discuss the one-dimensional Euclidean case,  for more general nonlinearities. However, their arguments are easily adaptable to our framework as well. We omit details. See also \cite{PZ1,PZ2} for similar techniques applied to a related (but substantially different) problem in general Euclidean space.

\smallskip

In agreement with the above discussion, we can state the following existence result.

\begin{pro}[Existence]\label{thm:ex}
Let $ u_0 \in L^\infty(M) $, with $ u_0 \ge 0 $. Then there exists a solution to problem \eqref{eq1}, in the sense of Definition \ref{def-sol-vw}, with $$ T=\frac{1}{(p-1) \| u_0 \|_\infty^{p-1}} \, , $$
which is obtained as a monotone limit of the solutions to the approximate problems \eqref{approx-BR}. Moreover, such a solution is \emph{minimal}, in the sense that any other solution is larger.
\end{pro}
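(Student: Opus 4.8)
The plan is to realize the desired solution as the increasing limit of the Dirichlet solutions $u_R$ on the balls $B_R$, and then to verify that this limit both solves \eqref{eq1} in the very weak sense of Definition \ref{def-sol-vw} up to time $T$ and is minimal. Most of the building blocks have already been prepared in the discussion preceding the statement, so the task is mainly to assemble them carefully and to isolate the one genuinely analytic step.

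First I would record the three inputs. For each fixed $R$, problem \eqref{approx-BR} admits a unique nonnegative solution $u_R$ on a maximal interval $(0,T_R)$, and on $B_R$ the comparison principle between sub- and supersolutions holds; both facts are borrowed from \cite{ACP}. The spatially constant function $t \mapsto \| u_0 \|_\infty (1-(p-1)\| u_0 \|_\infty^{p-1} t)^{-1/(p-1)}$ solves the companion ODE $x'=x^p$ and, having vanishing Laplacian and being nonnegative on $\partial B_R$, is a supersolution of \eqref{approx-BR}; comparison therefore yields the pointwise bound \eqref{approx-BR-1}, and in particular $T_R \ge T := ((p-1)\| u_0 \|_\infty^{p-1})^{-1}$ uniformly in $R$. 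Next I would establish monotonicity: extending $u_R$ by zero outside $B_R$ gives a subsolution of the problem on $B_{R+1}$ with the same initial datum and boundary value $0 = u_{R+1}$ on $\partial B_{R+1}$, so comparison on $B_{R+1}$ forces $u_R \le u_{R+1}$. The family $\{u_R\}$ is thus pointwise nondecreasing and, by \eqref{approx-BR-1}, dominated on $M\times(0,S)$ for every $S<T$ by a function in $L^\infty$; hence $u := \lim_{R\to\infty} u_R$ is well defined, belongs to $L^\infty(M\times(0,S))$ for all $S<T$, and obeys the same ODE bound.

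To show that $u$ solves \eqref{eq1}, I would fix a nonnegative $\varphi \in C^\infty_c(M\times[0,T))$. Because $\varphi$ has compact support in space and vanishes near $t=T$, there is $S<T$ with $\operatorname{supp}\varphi \subset B_R \times [0,S]$ for all large $R$, so $\varphi$ is admissible in the weak formulation \eqref{eq-vwR} for $u_R$. The only delicate point is the passage to the limit $R\to\infty$ in the nonlinear terms of \eqref{eq-vwR}: the convergences $u_R\to u$, $u_R^m\to u^m$ and $u_R^p\to u^p$ hold pointwise and monotonically, while the ODE bound furnishes, on the fixed compact set $\operatorname{supp}\varphi$ (where $1-(p-1)\|u_0\|_\infty^{p-1}t$ stays bounded away from zero since $S<T$), a constant dominating function. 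Dominated convergence then turns \eqref{eq-vwR} into \eqref{eq-vw}. This is where I expect the main, if modest, obstacle to lie, namely in keeping the nonlinear terms uniformly controlled up to but not including the blow-up time $T$; the uniform ODE bound is precisely what makes this control available.

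Finally, minimality would follow from the same comparison principle. If $v$ is any nonnegative solution to \eqref{eq1}, then its restriction to $B_R$ is a supersolution of \eqref{approx-BR} with $v\ge 0$ on $\partial B_R$ and the same initial datum, whence $u_R \le v$ on $B_R$ by comparison (cf.\ Proposition \ref{thm:cp1}); letting $R\to\infty$ gives $u\le v$. This shows that the constructed solution, which by definition has maximal existence time at least $T$, is minimal, and completes the argument.
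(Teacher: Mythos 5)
Your proposal is correct and follows essentially the same route as the paper, which proves this proposition only through the discussion preceding it: comparison on balls from \cite{ACP}, the ODE supersolution bound \eqref{approx-BR-1} giving $T_R \ge T$, monotonicity $u_R \le u_{R+1}$, the monotone limit, and minimality via comparison with any other solution restricted to $B_R$. Your write-up merely makes explicit the steps the paper delegates to that discussion (notably the extension-by-zero argument for monotonicity and the dominated-convergence passage to the limit in the very weak formulation), which is a faithful completion rather than a different approach.
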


By taking advantage of the construction of the minimal solution, we can readily prove a fundamental comparison theorem.

\begin{pro}[Comparison with supersolutions]\label{thm:cp1}
Let $ u_0 \in L^\infty(M) $, with $ u_0 \ge 0 $. Let $ \overline{u} $ be a supersolution to \eqref{eq1} (for some $T>0$), according to Definition \ref{def-sol-vw}. Then, if $u$ is the minimal solution provided by Proposition \ref{thm:ex}, there holds
\begin{equation}\label{eq-cp1}
u \le \overline{u} \qquad \text{in } M \times (0,T) \, .
\end{equation}
In particular, if the supersolution exists at least up to time $ T $, then also $u$ does, i.e.~the maximal existence time for $ u $ is at least $T$.
\end{pro}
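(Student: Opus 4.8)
The plan is to reduce the global comparison on $M$ to the comparison principle on balls, which is available for the approximate problems \eqref{approx-BR} by \cite{ACP}, and then to pass to the limit exploiting the very construction of the minimal solution in Proposition \ref{thm:ex}. Accordingly, I would fix $R>0$ and first argue that the restriction $\overline{u}|_{B_R}$ is a supersolution to the Dirichlet problem \eqref{approx-BR}, then invoke comparison on $B_R$ to get $u_R\le\overline{u}$, and finally let $R\to\infty$.

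To show that $\overline{u}|_{B_R}$ is a supersolution to \eqref{approx-BR}, note that the differential inequality in the interior is inherited directly from the defining inequality \eqref{eq-vw}, since every nonnegative test function with compact support in the open ball $B_R$ is admissible in the $M$-formulation. The boundary and initial orderings are equally clear: the boundary datum of \eqref{approx-BR} is $u=0$, and $\overline{u}\ge0$ gives $\overline{u}\ge0$ on $\partial B_R$, while the initial datum $u_0$ is the same for $\overline{u}$ and for $u_R$. The only delicate point is that the test functions admitted in \eqref{eq-vwR} are merely required to vanish \emph{on} $\partial B_R$, not in a full neighbourhood of it, so they cannot be inserted verbatim into \eqref{eq-vw}.

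To handle this I would extend such a test function $\varphi$ by zero outside $B_R$ and observe that its distributional Laplacian on $M$ equals $(\Delta\varphi)\mathbf{1}_{B_R}$ plus a nonnegative surface measure concentrated on $\partial B_R$, the coefficient being the jump of the normal derivative $-\partial_\nu\varphi\ge0$ (indeed $\varphi\ge0$ inside and vanishes on the boundary, so the outward derivative $\partial_\nu\varphi\le0$). Testing the supersolution inequality \eqref{eq-vw} against a smooth approximation of this zero-extension and passing to the limit, the extra surface contribution $\int_0^T\!\int_{\partial B_R}\overline{u}^m(-\partial_\nu\varphi)$ appears on the right-hand side with a favourable sign, thanks to $\overline{u}\ge0$; discarding it only strengthens the inequality. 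Hence \eqref{eq-vwR} holds with ``$\ge$'', i.e.~$\overline{u}|_{B_R}$ is a supersolution to \eqref{approx-BR}.

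With this in hand, the comparison principle for \eqref{approx-BR} from \cite{ACP} yields $u_R\le\overline{u}$ in $B_R\times(0,T)$ for every $R$; in particular the bound $u_R\le\|\overline{u}\|_{L^\infty}$ prevents $u_R$ from blowing up before $T$, so each $u_R$ is defined up to $T$. Since the minimal solution is the monotone limit $u=\lim_{R\to\infty}u_R$, letting $R\to\infty$ gives \eqref{eq-cp1}. The statement on existence times then follows at once: if $\overline{u}\in L^\infty(M\times(0,S))$ for all $S<T$, then $u\le\overline{u}$ forces $u$ to stay bounded on the same intervals, so the maximal existence time of $u$ is at least $T$. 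I expect the main obstacle to be precisely the rigorous justification of the restriction step---controlling the boundary term and the smooth approximation of the zero-extension of $\varphi$---whereas the passage $R\to\infty$ is immediate by monotonicity.
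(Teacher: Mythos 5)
Your proposal is correct and follows essentially the same route as the paper: the paper's proof likewise observes that $\overline{u}$ is a supersolution to \eqref{approx-BR} for each $R$, applies the comparison principle on balls from \cite{ACP} to get $u_R\le\overline{u}$, and lets $R\to\infty$ using the monotone construction of the minimal solution. The only difference is that the paper dismisses the restriction step as ``clear,'' whereas you rigorously justify it via the zero-extension of the test function and the sign of the resulting boundary term---a legitimate filling-in of the detail rather than a different argument.
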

\begin{proof}
It is enough to apply the above-mentioned comparison results in $ B_R $: since $ \overline{u} $ is clearly also a supersolution to \eqref{approx-BR}, for each $ R>0 $, we have
\begin{equation}\label{eq-cp1-R}
u_R \le \overline{u} \qquad \text{in } B_R \times (0,T) \, .
\end{equation}
By passing to the limit as $ R \to \infty $ in \eqref{eq-cp1-R} we obtain \eqref{eq-cp1}, which trivially ensures that $ u $ does exist at least up to $T$, by the definition of maximal existence time.
\end{proof}

We also have a similar result for subsolutions.

\begin{pro}[Comparison with subsolutions]\label{thm:cp2}
Let $ u_0 \in L^\infty(M) $, with $ u_0 \ge 0 $. Let $ u $ be a solution (for some $T \equiv T_1 > 0$) and $ \underline{u} $ be a subsolution (for some $T \equiv T_2 > 0$) to \eqref{eq1}, according to Definition \ref{def-sol-vw}. Suppose that $ \underline{u} $ has the following additional property:
\[
\operatorname{supp} \underline{u} \vert_{M \times [0,S]} \ \ \text{is \emph{compact} for all } S < T_2 \, .
\]
Then
\begin{equation}\label{eq-cp2}
u \ge \underline{u} \qquad \text{in } M \times (0, T_1 \wedge T_2 ) \, .
\end{equation}
\end{pro}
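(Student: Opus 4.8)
The plan is to reduce the comparison on the whole manifold to the comparison principle on balls recalled in Section~\ref{existence} (from \cite{ACP}), exploiting the compact support of $\underline{u}$ to control the behaviour at $\partial B_R$, and to interpose the minimal solution of Proposition~\ref{thm:ex} in order to avoid comparing $u$ with a ball solution directly (which would be the ``wrong'', harder direction). Fix $S < T_1 \wedge T_2$; it suffices to prove $\underline{u} \le u$ on $M \times (0,S)$ and then let $S \uparrow T_1 \wedge T_2$. Denote by $u_{\min} = \lim_{R\to\infty} u_R$ the minimal solution emanating from $u_0$. Since $u$ is in particular a supersolution, Proposition~\ref{thm:cp1} gives $u_{\min} \le u$ and, because $u \in L^\infty(M\times(0,S))$, ensures that $u_{\min}$ (hence each $u_R \le u_{\min}$) exists and is bounded up to time $S$; in particular $T_R > S$ for all $R$.

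The core step is to show that $\underline{u}$, restricted to a sufficiently large ball, is a subsolution of the Dirichlet problem \eqref{approx-BR}. I choose $R$ so large that $\operatorname{supp}\underline{u}\vert_{M\times[0,S]}$ is compactly contained in $B_R$, and pick a cutoff $\chi \in C^\infty_c(B_R)$ with $0\le\chi\le1$ and $\chi \equiv 1$ on a neighbourhood of $\operatorname{supp}\underline{u}\vert_{M\times[0,S]}$. Given an admissible nonnegative test function $\varphi$ for \eqref{approx-BR} (smooth on $\overline{B}_R\times[0,S]$, vanishing on $\partial B_R$ and near $t=S$), the product $\chi\varphi$ extends by zero to a function in $C^\infty_c(M\times[0,S))$ and is thus admissible in the very weak formulation \eqref{eq-vw} for $\underline{u}$. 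On $\operatorname{supp}\underline{u}$ one has $\chi\equiv1$, so that $(\chi\varphi)_t=\varphi_t$ and $\Delta(\chi\varphi)=\Delta\varphi$ there; since $\underline{u},\underline{u}^m,\underline{u}^p$ vanish outside that set, every space-time integral against $\chi\varphi$ equals the corresponding ball integral against $\varphi$. For the initial term I use $0\le\chi\le1$, $\varphi\ge0$ and $u_0\ge0$ to bound $\int_{B_R} u_0\,\chi\varphi(\cdot,0)\,d\mathcal V \le \int_{B_R} u_0\,\varphi(\cdot,0)\,d\mathcal V$; as we are proving a ``$\le$'' inequality, enlarging the right-hand side is harmless. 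This yields that $\underline{u}\vert_{B_R}$ satisfies \eqref{eq-vwR} with ``$\le$'', i.e.~it is a subsolution of \eqref{approx-BR}, and it manifestly vanishes on $\partial B_R$.

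With this in hand I apply the comparison principle on $B_R$ of \cite{ACP} to the subsolution $\underline{u}$ and the solution $u_R$, which share the initial datum $u_0$ and satisfy $\underline{u}=0\le u_R$ on $\partial B_R$; this gives $\underline{u}\le u_R$ on $B_R\times(0,S)$. Chaining the inequalities, on $B_R$ we obtain $\underline{u}\le u_R\le u_{\min}\le u$, while outside $B_R$ we have $\underline{u}=0\le u$ trivially; hence $\underline{u}\le u$ on $M\times(0,S)$. Letting $S\uparrow T_1\wedge T_2$ produces \eqref{eq-cp2}.

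The main obstacle is precisely the boundary behaviour in this transfer step: a test function for the Dirichlet problem on $B_R$, extended by zero, is only Lipschitz across $\partial B_R$ and so cannot be used directly in the formulation on $M$. The compact support of $\underline{u}$ is exactly what allows the cutoff $\chi$ to remove this difficulty without altering any integral over $\operatorname{supp}\underline{u}$, while the nonnegativity of $u_0$ and $\varphi$ steers the initial term in the correct direction. A secondary point to monitor is that $u_R$ must exist up to time $S$; this is secured \emph{a priori} by $u_R\le u_{\min}\le u$ together with $u\in L^\infty(M\times(0,S))$, rather than by the crude lower bound $T_R\ge 1/((p-1)\|u_0\|_\infty^{p-1})$ of \eqref{approx-BR-1}.
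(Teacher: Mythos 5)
Your proof is correct, and its skeleton is the same as the paper's: fix $S < T_1 \wedge T_2$, take $R$ so large that $\operatorname{supp}\underline{u}\vert_{M\times[0,S]} \subset B_R$, apply the comparison principle on balls from \cite{ACP}, and then let $R\to\infty$ and $S \uparrow T_1\wedge T_2$. The difference lies in how the solution $u$ enters. The paper compares $u$ and $\underline{u}$ directly on $B_R$, asserting that $u$ restricted to $B_R$ is a supersolution of the Dirichlet problem \eqref{approx-BR} while $\underline{u}$ is a subsolution; you instead compare $\underline{u}$ only with the ball solutions $u_R$ and then chain $\underline{u}\le u_R\le u_{\min}\le u$, invoking the monotone construction of the minimal solution together with Proposition \ref{thm:cp1}. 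Your route has two merits: the cutoff argument with $\chi$ makes fully explicit why the compactly supported $\underline{u}$ is an admissible Dirichlet subsolution (a step the paper leaves implicit, and the only place where compact support is really used), and it pushes the other delicate transfer --- that a nonnegative solution on $M$ restricts to a supersolution of the Dirichlet problem on $B_R$, where zero-extended boundary test functions are only Lipschitz across $\partial B_R$ --- into a result already established. Be aware, though, that this technicality is not actually eliminated: the proof of Proposition \ref{thm:cp1} that you cite relies on exactly that restriction property applied to $\overline{u}=u$, so your argument outsources it rather than avoids it; as a use of an earlier proposition as a black box, this is perfectly legitimate. Your continuation argument guaranteeing $T_R > S$ (via $u_R \le u_{\min} \le u \in L^\infty(M\times(0,S))$) is also sound, and indeed sharper than the crude lower bound \eqref{approx-BR-1}.
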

\begin{proof}
Again, it is sufficient to apply comparison on balls: we fix any $ S < T_1 \wedge T_2 $ and observe that, if $ R $ is so large that $ \operatorname{supp} \underline{u} \rfloor_{M \times [0,S]} \subset B_R $, then $ u $ and $ \underline{u} $ are a supersolution and a subsolution, respectively, to \eqref{approx-BR}, whence
$$
u \ge \underline{u} \qquad \text{in } B_R \times (0,S) \, .
$$
Inequality \eqref{eq-cp2} then just follows by letting $ R \to \infty $ and using the arbitrariness of $S$.
\end{proof}

In the sequel, by ``solution'' to \eqref{eq1} we shall tacitly mean the minimal one, according to Proposition \ref{thm:ex}, to which therefore the crucial comparison results provided by Propositions \ref{thm:cp1}--\ref{thm:cp2} are directly applicable.

\medskip

\begin{oss} \rm It can be shown that if, for some $C>0,$
\[\operatorname{Ric}_o(x) \geq - C(1+ r(x))^2 \quad \textrm{for all}\;\; x\in M\,, \]
then the comparison principle between {\it any} bounded sub-- and supersolution holds for problem \eqref{eq1}; consequently, problem \eqref{eq1} admits a unique solution in $L^\infty(M\times (0, T))$. These results follow by combining the arguments of \cite{GMPbd} (where the Cauchy problem for \eqref{pme} has been dealt with) and those in \cite{ACP}, in order to consider the source term $u^p$. In this direction, let us mention that such a hypothesis on the Ricci curvature to get uniqueness is quite natural; see, e.g., \cite{I,I2,IM} for the linear case, $m=1$, without source terms. We omit the details, since a general comparison principle for problem \eqref{eq1} is not the main concern of the present paper; in fact, in order to prove our results, we do not need it, but  it is sufficient to use the comparison principle in the form of Propositions \ref{thm:cp1}, \ref{thm:cp2}.
\end{oss}


\section{Statements of the main results}

Our first main result concerns global existence of solutions for sufficiently small, compactly supported data, in the case $p>m$. Besides, such small data show propagation properties \it identical \rm to the ones valid for the unforced porous medium equation \eqref{pme}. Hereafter, given a compactly supported datum $u_0$, we define $\mathsf{R}(t)$ to be the radius of the smallest ball that contains the support of the solution at time $t$.


\begin{thm}\label{super p>m}
Let assumptions \eqref{H1}, \eqref{H3} be satisfied. Let $u_0\in L^{\infty}(M), u_0\geq 0$ with $\operatorname{supp}\, u_0\subset B_{R_0}$ for some $R_0>0$. Suppose that $p > m$ and that $\|u_0\|_\infty$ is sufficiently small in a sense to be made more precise below. Then problem \eqref{eq1} (with $T=\infty$) has a global in time solution $u(t)$. Besides, the bound
\[
u(x,t)\le C \zeta(t) \left[ 1- \frac r a \eta(t) \right]_+^{\frac 1{m-1}}\,\qquad \forall t\ge0,\ \forall x\in M
\]
holds for the following choices of functions $\eta, \zeta$, of the constants $C>0, a>0$, and of the initial data $u_0$:
\begin{enumerate}[(i)]
\item \[\zeta(t):=(\tau+t)^{-\frac1{m-1}}[\log(\tau +t)]^{\frac{\beta}{m-1}}\,, \quad \eta(t):=[\log(\tau +t)]^{-\beta} \]
for any given $\beta \geq 1$. The constants $a= a_1(h,N,m,R_0)$, $\tau=\tau_1(m,\beta,p,a)$ must be large enough and $C^{m-1}=c\,a^2$ for a suitable $c=c(\beta,m)$. Finally, one requires that $\|u_0\|_\infty\le C_1(h,N, m,\beta,p, R_0)$ is sufficiently small.

As a consequence one has, for the class of data considered and all $t\ge0$:
\begin{equation}\label{fb}
\mathsf{R}(t)\le a\,[\log(\tau +t)]^{\beta},\qquad \|u(t)\|_\infty\le C\,(\tau+t)^{-\frac1{m-1}}[\log(\tau +t)]^{\frac{\beta}{m-1}};
\end{equation}
\medskip
\item \[\zeta(t):=(\tau+t)^{-\alpha}\,, \quad \eta(t):= (\tau +t)^{-\beta} \]
with
\[
\frac 1{p-1} < \alpha <\frac 1{m-1},  \qquad \beta =1-\alpha(m-1)\,.
\]
The constants $a= a_2(h,N,m,R_0)$, $\tau=\tau_2(m,\alpha,p,a)$ must be large enough and $C^{m-1}=c\,a^2$ for a suitable $c=c(\alpha,m)$. Finally, one requires that $\|u_0\|_\infty\le C_2(h,N, m,\beta,p, R_0)$ is sufficiently small, with $C_2>C_1$ given in item \it(i).

As a consequence one has, for the class of data considered and all $t\ge0$:
$$
\mathsf{R}(t)\le a\,(\tau +t)^{\beta},\qquad \|u(t)\|_\infty\le C\,(\tau+t)^{-\alpha}.
$$

\end{enumerate}
\end{thm}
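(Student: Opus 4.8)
The plan is to invoke the comparison principle of Proposition~\ref{thm:cp1}: for each of the two choices of $(\zeta,\eta)$ it suffices to exhibit a supersolution of the announced form
\[
\overline u(x,t)=C\,\zeta(t)\,y^{\frac1{m-1}}\,,\qquad y:=\Big[1-\tfrac ra\,\eta(t)\Big]_+\,,
\]
that dominates $u_0$ at $t=0$; then global existence, the support bound $\mathsf R(t)\le a/\eta(t)$ and the sup-bound $\|\overline u(t)\|_\infty=C\zeta(t)$ of \eqref{fb} are immediate from the explicit shape. The exponent $\tfrac1{m-1}$ is chosen so that $(\overline u^{\,m})_r$ stays continuous across the free boundary $\{y=0\}$, whence the pointwise differential inequality computed in $\{y>0\}$ is enough to make $\overline u$ a supersolution in the sense of Definition~\ref{def-sol-vw} (a standard fact for Barenblatt-type barriers). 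The first, geometric, step is to discard the full Laplacian: since $\overline u$ is radial and nonincreasing in $r$ we have $(\overline u^{\,m})_r\le0$, so combining with the lower bound $\m(r,\theta)\ge(N-1)h$ from \eqref{eq13} (this is where \eqref{H3} enters) gives
\[
\Delta(\overline u^{\,m})=(\overline u^{\,m})_{rr}+\m(r,\theta)\,(\overline u^{\,m})_r\le(\overline u^{\,m})_{rr}+(N-1)h\,(\overline u^{\,m})_r\,,
\]
and it remains to check the one-dimensional inequality with \emph{constant} drift $(N-1)h$, i.e.\ $\overline u_t-(\overline u^{\,m})_{rr}-(N-1)h\,(\overline u^{\,m})_r-\overline u^{\,p}\ge0$.

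I would then differentiate and collect everything by powers of $y$, using $r=\tfrac a\eta(1-y)$ to rewrite the $\eta'$-part of $\overline u_t$. Exactly three powers appear, $y^{\frac{2-m}{m-1}}$, $y^{\frac1{m-1}}$ and $y^{\frac p{m-1}}$, whose exponents are strictly increasing because $m>1$ and $p>1$. Writing the left-hand side as $A\,y^{\frac{2-m}{m-1}}+B\,y^{\frac1{m-1}}-C^p\zeta^p\,y^{\frac p{m-1}}$ and using $y\in(0,1)$ to bound $y^{\frac p{m-1}}\le y^{\frac1{m-1}}$, the whole inequality reduces to the two pointwise-in-$t$ conditions
\[
A\ge0\,,\qquad B\ge C^p\zeta^p\,,
\]
where $A$ pits the $(\overline u^{\,m})_{rr}$ term against the $\eta'$-part of $\overline u_t$, and $B$ pits the drift term $(N-1)h\,(\overline u^{\,m})_r$ against $\zeta'$, the $\zeta$-part of $\overline u_t$, and the reaction. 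The normalisation $C^{m-1}=c\,a^2$ is precisely what makes the two pieces of $A$ homogeneous in $(\tau+t)$, reducing $A\ge0$ to $|\eta'|\gtrsim\eta^3\zeta^{m-1}$, which holds once $\tau$ is large.

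The crux is the condition $B\ge C^p\zeta^p$. Dividing by $C\zeta$ and inserting the explicit $\zeta,\eta$, the good (positive) drift term is $\asymp (N-1)h\,a\,(\tau+t)^{-1}$ in case~(ii) (with the analogous logarithmic form in case~(i)), the negative $\zeta'$ and $\eta'$ terms decay like $(\tau+t)^{-\alpha-1}$, hence faster, while the reaction contributes $\asymp C^{p-1}(\tau+t)^{-\alpha(p-1)}$. Everything therefore hinges on the reaction decaying at least like $(\tau+t)^{-1}$, i.e.\ on $\alpha(p-1)\ge1$; this is exactly the hypothesis $\alpha>\tfrac1{p-1}$, and its compatibility with the upper constraint $\alpha<\tfrac1{m-1}$ forces the interval to be nonempty precisely when $p>m$ — which is where that assumption is used (in case~(i) the slack comes instead from the logarithmic factors and again rests on $(p-1)/(m-1)>1$, i.e.\ $p>m$). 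One then fixes the parameters in the order dictated by the statement: first $a$ large, depending on $h,N,m,R_0$, to render the drift term dominant and to guarantee $a/\eta(0)>R_0$; then $\tau$ large, depending on $m,\alpha,p,a$, to absorb the faster-decaying terms uniformly for all $t\ge0$; finally $C$ is determined by $C^{m-1}=c\,a^2$. With all parameters frozen, $\overline u(\cdot,0)=C\zeta(0)\,[1-\tfrac ra\eta(0)]_+^{1/(m-1)}$ is a fixed positive function that is bounded below on $B_{R_0}$, so imposing $\|u_0\|_\infty$ below the resulting constant $C_1$ (resp.\ $C_2$) yields $\overline u(\cdot,0)\ge u_0$, and Proposition~\ref{thm:cp1} closes the argument.

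The main obstacle I anticipate is the bookkeeping behind $B\ge C^p\zeta^p$: one must dominate the reaction simultaneously near the free boundary (small $y$, where the diffusive term $A$ does the work) and in the bulk (where the drift term controls $B$), and check that the prescribed rates $\alpha,\beta$ make these two mechanisms compatible for \emph{every} $t\ge0$, not just asymptotically. Case~(i), where the powers match and only the logarithms separate the competing terms, is the borderline and most delicate instance, and is the place where the choices of $\tau$ and of the structural constant $c=c(\beta,m)$ must be made most carefully.
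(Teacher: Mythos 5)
Your strategy coincides with the paper's: the same barrier $C\zeta(t)\left[1-\tfrac{r}{a}\eta(t)\right]_+^{1/(m-1)}$, the same geometric reduction (radial monotonicity of the barrier plus the bound $\m(r,\theta)\ge (N-1)h$ from \eqref{eq13}, which is exactly how \eqref{H3} enters), reduction to conditions on $(\zeta,\eta)$ pointwise in $t$, domination of $u_0$ at $t=0$, and Proposition \ref{thm:cp1} to conclude; this is Proposition \ref{supersol} combined with Lemmas \ref{supersol1} and \ref{supersol2}. Your only structural deviation is how the three powers of $y$ are collapsed into two scalar conditions: you bound $y^{p/(m-1)}\le y^{1/(m-1)}$ and require $A\ge0$ and $B\ge C^p\zeta^p$, while the paper exploits concavity in $F$ of $\varphi(F,t)=\xi(t) F-\delta(t)-\gamma(t) F^{(p-2+m)/(m-1)}$ and only imposes nonnegativity at the endpoints $F=0,1$ (conditions \eqref{eq19}--\eqref{eq20}). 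Your conditions are marginally stronger (with the paper's notation, $B\ge C^p\zeta^p$ reads $\xi\ge\gamma$ rather than $\xi-\delta\ge\gamma$; since $A\ge 0$ forces $\delta\le0$, yours implies theirs), but they are verifiable under the same parameter regime, so this variant is harmless.

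There is, however, one concrete error in your bookkeeping of $B\ge C^p\zeta^p$. You claim (case (ii)) that, after dividing by $C\zeta$, the drift contributes $\asymp (N-1)h\,a\,(\tau+t)^{-1}$ while ``the negative $\zeta'$ and $\eta'$ terms decay like $(\tau+t)^{-\alpha-1}$, hence faster.'' This mixes two normalizations: divided by $C\zeta$, the $\zeta'$ term is $\zeta'/\zeta=-\alpha(\tau+t)^{-1}$ and the $\eta'$ term is $-\tfrac{\beta}{m-1}(\tau+t)^{-1}$, i.e.\ \emph{exactly} the same rate as the drift, not faster. Equivalently, undivided, the drift is $\asymp c\,a\,(\tau+t)^{-\alpha m-\beta}=c\,a\,(\tau+t)^{-\alpha-1}$ because $\beta=1-\alpha(m-1)$ gives $\alpha m+\beta=\alpha+1$; the same degeneracy occurs in case (i), where $\alpha=1/(m-1)$ gives $\alpha+1=\alpha m$ (with matching logarithmic factors). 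Consequently, taking $\tau$ large does \emph{nothing} to absorb the $\zeta'$ and $\eta'$ terms: the ratio of drift to these terms is independent of $t$ and of $\tau$. What absorbs them is the factor $a$ carried by the drift through $C^{m-1}/a=c\,a$; this is precisely the content of the paper's condition \eqref{eq56-bis}, a largeness requirement on $C^{m-1}/a$ and hence on $a$. Only the reaction term, decaying like $(\tau+t)^{-\alpha p}$ with $\alpha p>\alpha+1$ (equivalent to $\alpha>1/(p-1)$, and in case (i) to $p>m$), is absorbed by choosing $\tau$ large, as in \eqref{eqx10}. Your stated order of choices (``first $a$ large to render the drift dominant, then $\tau$ large'') does survive once the roles are reassigned this way, but as written the justification assigns the $\zeta'$, $\eta'$ terms to the $\tau$-step, where the argument would fail.
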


The above theorem provides upper bounds on solutions, and hence on the corresponding $L^\infty$ norm and free boundary radius $\mathsf{R}(t)$, that depend on the parameters involved and hence on the class of data considered. Of course, rougher bounds correspond to a wider set of initial data. It is important to stress that both bounds appearing in \eqref{fb}, in the case $\beta=1$, correspond exactly to those valid for the free porous medium equation on $\mathbb{H}^N$ proved in \cite{VazH} and developed upon in \cite{GMV}, which are known to be sharp. We do not know whether the rougher upper bounds stated above are optimal for some class of data, but we complement the results by showing that, under \it lower \rm curvature bounds, blow-up can occur for sufficiently big, although compactly supported, data.

In our next result we show in fact that if $p>m$ and lower curvature bounds (see \eqref{H2}) hold, all sufficiently large data give rise to solutions blowing up in finite time.

\begin{thm}\label{sub p>m}
Let assumptions \eqref{H1}, \eqref{H2} be satisfied. Suppose that $p > m$. For any $T>0$ there exist compactly supported initial data $u_0\in L^{\infty}(M), u_0\geq 0$ such that the corresponding solution $u(t)$ of problem \eqref{eq1} blows up at a time $S\le T$ in the sense that $\|u(t)\|_\infty\to+\infty$ as $t\to S^-$. More precisely, the bound
\[
u(x,t)\ge C \zeta(t) \left[ 1- \frac r a \eta(t) \right]_+^{\frac 1{m-1}}\,\qquad \forall t\in(0,T\wedge S),\ \forall x\in M
\]
holds for the following choices of functions $\eta, \zeta$, of the constants $C,a,T>0$, and of the class of initial data $u_0$:
\begin{enumerate}[(i)]
\item
\[
\zeta(t):=(T-t)^{-\alpha}[-\log(T -t)]^{\frac{\beta}{m-1}}\,, \qquad \eta(t):=[-\log(T -t)]^{-\beta} \,\,\; \text{for every}\;\; t\in [0,T) \, ,
\]
with $ T \in (0,1) $ and
\[
\alpha > \frac 1 {m-1} \, , \ \beta >0 \qquad \text{or } \qquad \alpha=\frac 1{m-1} \, , \ 0<\beta\leq 1 \, .
\]
The constant $C=C(a,\alpha, \beta,m,k, N,C_0,p)$ must be large enough ($ C_0 $ is as in \eqref{eqC0})
and that $T = T(a, C, p, m, \alpha, \beta, N, k, C_0) \in (0,1)$ is small enough. Finally one requires that $\operatorname{supp}\, u_0\supset B_{R_0}$ with $R_0=R_0(a,T,\beta)$ large enough and $\inf_{B_{R_0}}u_0\ge K(C,T,m,\alpha,\beta)$ large enough;
\vskip12pt
\item \[\zeta(t):=(T-t)^{-\alpha} \, , \quad \eta(t):=(T -t)^{\beta} \quad \text{for every } t\in [0, T) \, , \]
with
\[
\alpha > \frac 1{m-1} \, , \qquad 0<\beta\leq \frac{\alpha(m-1)-1}{2} \, .
\]
The constant $C=C(a,\alpha, \beta,m,k, N,C_0,p)$ must be large enough and that $T=T(a, C,$ $p, m, \alpha, \beta, N, k, C_0) \in (0,1)$ is small enough. Finally one requires that $\operatorname{supp}\, u_0\supset B_{R_0}$ with $R_0=R_0(a,T,\beta)$ large enough and $\inf_{B_{R_0}}u_0\ge K(C,T,\alpha)$ large enough.

\end{enumerate}
\end{thm}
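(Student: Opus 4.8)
The plan is to prove blow-up by constructing the explicit \emph{subsolution} $\underline u(x,t)=C\zeta(t)\,[1-\frac{r}{a}\eta(t)]_+^{1/(m-1)}$ and invoking the comparison principle of Proposition~\ref{thm:cp2}. Once $\underline u$ is known to be a genuine (very weak) subsolution of \eqref{eq1} with $\underline u(\cdot,0)\le u_0$, the proposition yields $u\ge\underline u$ on $M\times(0,T\wedge S)$; since $\zeta(t)\to+\infty$ as $t\to T^-$ while $\eta(t)\to0$, evaluating the bound at any fixed radius forces $\|u(t)\|_\infty\to+\infty$ at some $S\le T$. Thus the whole content of the statement is the verification that $\underline u$ is a subsolution, together with the bookkeeping of constants making the datum dominate $\underline u(\cdot,0)$.

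First I would reduce the partial differential inequality to an ordinary one in the radial variable. As $\underline u$ is radial and radially decreasing we have $(\underline u^m)_r\le0$; hence, using the lower Ricci bound \eqref{H2} through the Laplacian comparison \eqref{eq12} (so that $\m(r,\theta)\le C_0:=(N-1)\,k\coth k$ for $r\ge1$) and multiplying by the nonpositive factor $(\underline u^m)_r$, one obtains $\Delta(\underline u^m)=(\underline u^m)_{rr}+\m\,(\underline u^m)_r\ge(\underline u^m)_{rr}+C_0\,(\underline u^m)_r$ for $r\ge1$. It therefore suffices to check the constant-coefficient inequality $\underline u_t\le(\underline u^m)_{rr}+C_0\,(\underline u^m)_r+\underline u^p$. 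Writing $\xi:=1-\frac{r}{a}\eta(t)$ and substituting the profile, every term is a product of powers of $\zeta,\eta,a$ and of $\xi$; after dividing by the positive factor $\xi^{(2-m)/(m-1)}$ the inequality becomes a polynomial-type expression in $\xi$ whose coefficients are explicit functions of $t$.

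The heart of the matter is the balance of these coefficients in two regimes. \textbf{At the free boundary} ($\xi\to0^+$) only the two $\xi^0$-terms survive and the inequality collapses to $-\eta'/\eta\le\frac{m}{m-1}\frac{\eta^2}{a^2}C^{m-1}\zeta^{m-1}$. For the explicit $\zeta,\eta$ in (i) and (ii) this is a purely algebraic condition on the exponents: in case (ii) it reads $2\beta-\alpha(m-1)\le-1$ as $t\to T^-$, i.e.\ $\beta\le\frac{\alpha(m-1)-1}{2}$, and in case (i) it reproduces exactly the dichotomy $\alpha>\frac{1}{m-1},\beta>0$ or $\alpha=\frac{1}{m-1},0<\beta\le1$. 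This is precisely where the stated restrictions on $\alpha,\beta$ originate, and it is the computation I would carry out most carefully. \textbf{In the interior} ($\xi$ bounded away from $0$) the reaction contribution, carrying the large factor $C^p\zeta^p$, must dominate the positive terms coming from $\zeta'$ and from the drift $C_0(\underline u^m)_r$; since $p>m$ (and one checks $\alpha>1/(p-1)$), the factor $\zeta^p$ beats $\zeta'$ and $\zeta^m\eta$ as $\zeta\to\infty$, so the inequality holds provided $C$ is large and $T$ small enough to keep $\zeta$ large on all of $[0,T)$.

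The step I expect to be the real obstacle is the behaviour near the pole. For $r<1$ the mean curvature $\m=\Delta r$ is no longer bounded by $C_0$ but blows up like $(N-1)/r$, so the reduction above is unavailable and the crude linear profile is not a pointwise subsolution in a neighbourhood of $o$; reconciling the core with the clean statement ``$\forall x\in M$'' is the subtle point, which I would treat by a dedicated analysis near the pole (of the kind provided by the auxiliary lemmas announced for Section~4). Granting this, it remains to fix the constants: the support of $\underline u(\cdot,0)$ is $B_{R_0}$ with $R_0=a/\eta(0)$, so choosing $\operatorname{supp}u_0\supset B_{R_0}$ and $\inf_{B_{R_0}}u_0\ge K:=C\zeta(0)$ gives $u_0\ge\underline u(\cdot,0)$; since $\underline u(\cdot,t)$ is compactly supported for each $t<T$, Proposition~\ref{thm:cp2} applies and yields $u\ge\underline u$, whence $\|u(t)\|_\infty\to+\infty$ at some $S\le T$.
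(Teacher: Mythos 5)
Your strategy coincides with the paper's: the same explicit profile used as a subsolution, comparison via Proposition \ref{thm:cp2}, and blow-up read off from $\zeta(t)\to+\infty$, $\eta(t)\to 0$; your free-boundary computation is also the paper's and correctly recovers the stated restrictions on $\alpha,\beta$ in both cases (i) and (ii). However, there is a genuine gap at the pole, which you flag and then defer: the linear-in-$r$ profile is \emph{not} a subsolution near $o$, since its radial derivative does not vanish at $r=0$ (indeed $(\underline u^m)_r(0^+,t)=-\tfrac{C^m m}{a(m-1)}\zeta^m\eta\neq 0$), so $\m(r,\theta)\,(\underline u^m)_r\sim -(N-1)\,c/r\to-\infty$ and the differential inequality fails in a whole neighbourhood of the pole. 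No "dedicated analysis" can rescue the profile as written: the barrier itself must be modified. The paper's fix (Proposition \ref{subsol}) is to replace $\underline u$ inside $B_1$ by the quadratic cap $v=C\zeta(t)\bigl[1-\tfrac{\eta(t)}{2a}(r^2+1)\bigr]_+^{1/(m-1)}$, glued so that $w^m\in C^1$ across $\partial B_1$ (values and radial derivatives of $u^m,v^m$ match at $r=1$); since $(v^m)_r=O(r)$, the singular drift is controlled via \eqref{eqC0}, at the price of the extra condition \eqref{eq41}. This is the only place where the constant $C_0$ of \eqref{eqC0} --- which appears in the very statement of the theorem --- enters; without this step the claimed bound ``$\forall x\in M$'' is not proved.

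A secondary, fixable imprecision lies in your interior regime: you argue that the reaction $\gamma=C^{p-1}\zeta^p$ merely has to dominate $\zeta'$ and the drift term $\sigma$. That is essentially condition \eqref{eq30} (which guarantees that the maximum point $F_0$ of the concave function $F\mapsto\sigma F-\delta-\gamma F^{\frac{p-2+m}{m-1}}$ lies in $[0,1]$), but it is not sufficient: since $\gamma F^{\frac{p-2+m}{m-1}}$ vanishes faster than $\sigma F$ as $F\to0^+$, one also needs the three-way balance \eqref{eq29}, namely $K_1\,\sigma^{\frac{p-2+m}{p-1}}\le \delta\,\gamma^{\frac{m-1}{p-1}}$, obtained by evaluating that concave function at its maximum. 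Equivalently, your two-regime split closes only if there exists a level $F_1$ with $(\sigma/\gamma)^{\frac{m-1}{p-2+m}}\le F_1\le \delta/\sigma$, which requires a \emph{quantitative} lower bound on $\delta$ (the paper's \eqref{eq105} and \eqref{eq111}), not just the sign condition $\delta\ge0$ that your free-boundary analysis yields. It is in this balance --- not in ``$\zeta^p$ beats $\zeta'$'' --- that $p>m$ is truly essential: at $p=m$ the powers of $C$ and $a$ cancel in \eqref{eq29}, and no choice of large $C$ can make it hold, as the paper explicitly observes.
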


We comment that the above result will be shown by constructing appropriate subsolutions that blow up \it everywhere \rm in $M$ at time $T$, with support becoming the whole $M$ exactly at time $T$. This does not rule out the possibility that $u$ blows up locally in the $L^\infty$ norm at some \it earlier \rm time $S$.

We now discuss the case $p<m$. As concerns global existence results, we are forced to restrict ourselves to the range $1<p\le(1+m)/2$. In that range, and under an upper bound on curvature (see \eqref{H3}), all compactly supported data give rise to a global in time solution.

\begin{thm}\label{super p<m}
Let assumptions \eqref{H1}, \eqref{H3} be satisfied. Let $u_0\in L^{\infty}(M), u_0\geq 0$ with $\operatorname{supp}\, u_0\subset B_{R_0}$ for some $R_0>0$.  Suppose that
\[
1<p\le\frac{m+1}{2}.
\]
Then problem \eqref{eq1} (with $T=\infty$) has a global in time solution $u(t)$. More precisely the bound
\[
u(x,t)\le C \zeta(t) \left[ 1- \frac r a \eta(t) \right]_+^{\frac 1{m-1}}\,\qquad \forall t\ge0,\ \forall x\in M
\]
holds for the following choices of functions $\eta, \zeta$, of the constants $C,a>0$:

\begin{enumerate}[(i)]

\item If $1<p<\frac{m+1}{2}$ one chooses
\[\zeta(t):=(\tau+t)^{\alpha}\,, \quad \eta(t):= (\tau +t)^{-\beta}\,, \]
with
\[
\alpha\geq \frac 1{m-2p +1},  \quad \beta =\frac{1+\alpha(m-1)}2\,,\quad \tau\geq 1\,,
\]
supposing in addition that $a\ge2R_0\vee H$ with $H=H(m,N,h,\beta)$ sufficiently large, and that $C=C(m,N,h,a,p)$ satisfies the (compatible) bounds $c_1\,a^{1/(m-p)}\le C\le c_2\, a^{2/(m-1)}$, where $c_1,c_2$ depend on $m,N,h,p$ and $\tau=\tau(C,\alpha,m,u_0)$ is sufficiently large.

\item If $p=(m+1)/2$ one chooses
\[\zeta(t):=\exp\{\alpha(\tau+t)\} \,, \quad \eta(t):=\exp\{-\beta(\tau +t)\}  \, , \]
with
\[
\alpha \ge \alpha_0(N,m,p,h) > 0 \, ,  \qquad \beta =\frac{\alpha(m-1)}2 \,, \quad \tau \ge 0,
\]
supposing in addition that the conditions on $a,C,\tau$ given in item \it i) above hold. 

\end{enumerate}

\end{thm}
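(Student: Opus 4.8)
The plan is to prove global existence by exhibiting, in each of the two cases, an explicit global-in-time \emph{supersolution} of Barenblatt (ZKB) type and then invoking the comparison Proposition~\ref{thm:cp1}: once we produce a supersolution $\overline u$ defined for all $t\ge0$ with $\overline u(\cdot,0)\ge u_0$, minimality of $u$ forces $u\le\overline u$ on $M\times(0,\infty)$, and global existence follows. The natural candidate is precisely the right-hand side of the claimed bound,
\[
\overline u(x,t)=C\,\zeta(t)\,\Big[\,1-\tfrac r a\,\eta(t)\,\Big]_+^{\frac1{m-1}},
\]
so the whole argument reduces to checking that $\overline u$ is a supersolution for the stated $\zeta,\eta$ and a suitable choice of $a,C,\tau$. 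Since $\overline u$ is radial, I would work with the polar expression \eqref{eq3} of the Laplacian, note that the spherical part $\Delta_{S_r}$ annihilates $\overline u^m$, and use the lower bound $\m(r,\theta)\ge(N-1)h$ furnished by \eqref{H3} via \eqref{eq13}; this constant confining drift is exactly the manifold feature that makes the construction work.

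Concretely, writing $F:=[1-\tfrac r a\eta]_+$ and $b:=1/(m-1)$, I would compute $\overline u_t$, $(\overline u^m)_r$ and $(\overline u^m)_{rr}$ in the support $\{F>0\}$ and collect $\overline u_t-\Delta(\overline u^m)-\overline u^p$ according to the powers $F^{b-1}$, $F^{b}$ and $F^{bp}$ that appear (note $b-1=(2-m)/(m-1)$ is the lowest power, governing the behaviour near the free boundary). Two balances then drive the choice of exponents. Matching the transport contribution $\zeta b\,|\eta'|/\eta\,F^{b-1}$ against the curvature/diffusion contribution $C^{m-1}\zeta^{m-1}\tfrac{m}{(m-1)^2}\tfrac{\eta^2}{a^2}F^{b-1}$ forces the time-powers to agree, which yields precisely $\beta=\tfrac{1+\alpha(m-1)}2$ in case (i) and $\beta=\tfrac{\alpha(m-1)}2$ in case (ii); the residual constant inequality is $C^{m-1}\le\mathrm{const}\cdot a^2$, i.e.\ the upper bound $C\le c_2 a^{2/(m-1)}$. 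Matching instead the reaction $C^p\zeta^p F^{bp}$ (estimated via $F^{bp}\le F^{b}$) against the positive drift $(N-1)h\,C^m\zeta^m\tfrac{m}{m-1}\tfrac\eta a F^b$ requires, at the level of time-powers, $\alpha(m-p)\ge\beta$; substituting $\beta$ this becomes $\alpha(m-2p+1)\ge1$, that is the hypothesis $\alpha\ge 1/(m-2p+1)$ of case (i), and in the borderline situation $p=(m+1)/2$ it degenerates to $p\le(m+1)/2$ with $\alpha\ge\alpha_0$ large enough to absorb the leftover constants — this is why the two cases demand polynomial versus exponential $\zeta,\eta$. The remaining constant inequality here gives the lower bound $C\ge c_1 a^{1/(m-p)}$, and taking $a$ large makes the two bounds on $C$ compatible while simultaneously shrinking the unfavourable $1/a^2$ second-derivative term.

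It then remains to secure the initial inequality and the behaviour across the free boundary. For the former, since $\operatorname{supp} u_0\subset B_{R_0}$ and $a\ge2R_0$ with $\eta(0)\le1$, one has $F(\cdot,0)\ge 1/2$ on $\operatorname{supp} u_0$, hence $\overline u(\cdot,0)\ge C\zeta(0)\,2^{-b}$; because here $\zeta$ is \emph{increasing}, choosing $\tau$ large makes $\zeta(0)$ as large as needed to dominate $\|u_0\|_\infty$ — this is exactly why \emph{all} data, not merely small ones, are admissible in this regime, in contrast with the $p>m$ case. For the free boundary, I would observe that $\overline u^m\sim F^{m/(m-1)}$ is $C^1$ across $\{F=0\}$ with $(\overline u^m)_r$ continuous and vanishing there, so $\Delta(\overline u^m)$ carries no singular measure and the distributional supersolution inequality reduces to the interior pointwise one already verified.

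The main obstacle will be the bookkeeping of the subleading powers of $F$: the balances above are only to leading order (near the free boundary, and for large $t$), and one must check that the chosen large $a,C,\tau$ render the full expression nonnegative \emph{uniformly} in $r\in[0,a/\eta)$ and $t\ge0$, the delicate region being the transition between the free-boundary regime, where $F^{b-1}$ dominates, and the interior, where the reaction term $F^{bp}$ competes with the drift. Packaging this uniform estimate is precisely what I expect the two auxiliary lemmas of Section~4 to provide, and I would apply them separately to the polynomial choice (i) and the exponential choice (ii).
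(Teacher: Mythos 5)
Your proposal is correct and follows essentially the same route as the paper: the same barrier $C\,\zeta(t)\left[1-\frac{r}{a}\,\eta(t)\right]_+^{1/(m-1)}$ compared via Proposition \ref{thm:cp1}, with exactly the balances that the paper encodes in the supersolution conditions \eqref{eq19}--\eqref{eq20} of Proposition \ref{supersol} and then verifies in Lemma \ref{supersol3} (polynomial case, giving $\beta=\frac{1+\alpha(m-1)}{2}$ and $\alpha(m-2p+1)\ge 1$) and Lemma \ref{supersol4} (exponential case, with $\alpha\ge\alpha_0$ absorbing the constants at $p=\frac{m+1}{2}$), including the same bounds $c_1 a^{1/(m-p)}\le C\le c_2 a^{2/(m-1)}$ and the choice of $\tau$ large to dominate $\|u_0\|_\infty$. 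The only cosmetic differences are that you handle intermediate values of $F$ by the monotonicity bound $F^{(p-2+m)/(m-1)}\le F$ where the paper checks the endpoints $F=0,1$ and invokes concavity in $F$ (both yield essentially the same sufficient conditions), and that besides the free boundary you should also account for the corner of the radial profile at the origin, where $(\overline{u}^m)_r(0,t)<0$, which the paper settles with a standard Kato-type inequality.
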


In the whole range $1<p<m$ we can prove qualitatively similar lower bounds. In fact, if $p\in(1,m)$ and lower curvature bounds (see \eqref{H2}) hold, all sufficiently large data give rise to solutions blowing up at worst in infinite time.

\begin{thm}\label{sub p<m}
Let assumptions \eqref{H1}, \eqref{H2} be satisfied. Let $u_0\in L^{\infty}(M), u_0\geq 0$ with $\operatorname{supp}\, u_0\supset B_{R_0}$ for some $R_0>0$. Suppose that
\[
1<p<m\,
\]
and that
\[
0<\alpha<\frac 1{m-1} \, , \qquad \beta=\frac{\alpha(m-1)+1}2 \, .
\]
Then the bound
\[
u(x,t)\ge C \zeta(t) \left[ 1- \frac r a \eta(t) \right]_+^{\frac 1{m-1}}\,\qquad \forall t\in(0,S),\ \forall x\in M
\]
holds, $S\le+\infty$ being the maximal existence time, for the following choices of functions $\eta, \zeta$, of the constants $C,a>0$, and of the class of initial data $u_0$:
\[\zeta(t):=(\tau+t)^{\alpha}\,, \quad \eta(t):=(\tau +t)^{-\beta} \quad \text{for every } t\in [0, \infty) \, ,\]

where $C=C(a,m,k,N,\alpha, \beta,C_0)$ ($ C_0 $ is as in \eqref{eqC0}) and must be sufficiently large,
$\tau=\tau(a, C, p, m, \alpha, \beta, N, k, C_0) \ge 1 $ is sufficiently large, and finally one requires that $R_0=R_0(a,\tau,\beta)$ is large enough and that $\inf_{B_{R_0}}u_0\ge K(C,\tau,\alpha)$ is large enough.

\end{thm}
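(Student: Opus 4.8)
The plan is to produce the function on the right-hand side (or a minor modification of it near the pole) as a \emph{genuine subsolution} of \eqref{eq1} with compact support, and then conclude by the comparison principle of Proposition \ref{thm:cp2}. Writing $r=r(x)=d(x,o)$, set
\[
\underline u(x,t):=C\,(\tau+t)^{\alpha}\left[1-\frac r a\,(\tau+t)^{-\beta}\right]_+^{\frac1{m-1}}=:C\zeta\,\phi^{\frac1{m-1}},\qquad \phi:=\left[1-\tfrac r a\,\eta\right]_+ .
\]
Its support at time $t$ is the ball $\{r\le a(\tau+t)^{\beta}\}$, compact for every $t$, which is exactly the structural hypothesis required in Proposition \ref{thm:cp2}. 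A first bookkeeping step matches the data: since $\underline u(\cdot,0)$ is supported in $B_{a\tau^\beta}$ and does not exceed $C\tau^\alpha$, the announced conditions $R_0\ge a\tau^\beta$ (i.e.\ $R_0=R_0(a,\tau,\beta)$) and $\inf_{B_{R_0}}u_0\ge C\tau^\alpha$ (i.e.\ $K=K(C,\tau,\alpha)$) guarantee $\underline u(\cdot,0)\le u_0$.

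The heart of the matter is the differential inequality $\underline u_t\le\Delta(\underline u^m)+\underline u^p$ where $\phi>0$. As $\underline u$ is radial, $\Delta(\underline u^m)=(\underline u^m)_{rr}+\m\,(\underline u^m)_r$; since $(\underline u^m)_r<0$ while \eqref{H2} gives, through \eqref{eq12}, the upper bound $\m\le C_0$ for $r\ge1$, one has $\Delta(\underline u^m)\ge(\underline u^m)_{rr}+C_0(\underline u^m)_r$, so it suffices to verify the stronger inequality $\underline u_t\le(\underline u^m)_{rr}+C_0(\underline u^m)_r+\underline u^p$ there. Differentiating, using $\tfrac r a\eta=1-\phi$, dividing by $C\zeta\,\phi^{(2-m)/(m-1)}>0$ and multiplying by $(\tau+t)$, and invoking the identities forced by $\beta=\tfrac{\alpha(m-1)+1}2$, namely $\zeta^{m-1}\eta^2=(\tau+t)^{-1}$ and $\zeta^{m-1}\eta=(\tau+t)^{(\alpha(m-1)-1)/2}$, the inequality collapses to
\[
\alpha\phi+\frac{\beta}{m-1}(1-\phi)\ \le\ \underbrace{\frac{m\,C^{m-1}}{(m-1)^2a^2}}_{=:P}-\frac{m\,C_0\,C^{m-1}}{(m-1)a}(\tau+t)^{\beta}\phi+C^{p-1}(\tau+t)^{1+\alpha(p-1)}\phi^{\frac{p+m-2}{m-1}},
\]
to hold for all $\phi\in(0,1]$ and $t\ge0$. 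The left side is at most $\tfrac{\beta}{m-1}$; as $\phi\to0$ the two $(\tau+t)$-dependent terms vanish, so one first asks that $P\ge\tfrac{\beta}{m-1}$, i.e.\ $C^{m-1}\ge\beta(m-1)a^2/m$ — the ``free boundary'' condition forcing $C$ large. For $\phi$ away from $0$ the \emph{superlinear} source term, whose $\phi$-exponent $q:=\tfrac{p+m-2}{m-1}=1+\tfrac{p-1}{m-1}$ exceeds $1$, must dominate the \emph{negative, linear} drift term. Minimizing their sum over $\phi\ge0$ yields a minimum proportional to $-(\tau+t)^{\,\beta-\mu/(q-1)}$ with $\mu:=1+\alpha(p-1)-\beta$, and a short computation shows that the exponent $\beta-\mu/(q-1)$ is negative \emph{if and only if} $\alpha(m-1)<1$. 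Thus precisely under the hypothesis $\alpha<\tfrac1{m-1}$ this minimum, though negative, tends to $0$ as $\tau\to\infty$ and can be absorbed into the positive gap $P-\tfrac{\beta}{m-1}$ by taking $\tau$ large; this closes the estimate for $r\ge1$. In other words, $\alpha<\tfrac1{m-1}$ is exactly what lets the source overcome the enhanced (negatively curved) diffusion.

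The one genuinely delicate point — and the step I expect to be the main obstacle — is the behaviour at the pole $o$. There the polar frame is singular and $\m(r,\theta)\sim(N-1)/r\to+\infty$, while $\phi^{1/(m-1)}$ has a \emph{nonzero radial slope} at $r=0$; hence $\m\,(\underline u^m)_r\to-\infty$ and the pointwise (thus weak) subsolution inequality \emph{fails} in a neighbourhood of $o$. This is the mirror image of the supersolutions of Theorems \ref{super p>m}--\ref{super p<m}, for which the same conical peak is harmless. To repair it I would replace $\underline u$ inside a fixed ball $B_1$ by a smooth, flat-topped cap $\widehat u=C\zeta\,g(r,t)^{\frac1{m-1}}$ with $g(0,t)=1$, $g_r(0,t)=0$, matching $1-\tfrac r a\eta$ in a $C^1$ fashion at $r=1$: such a cap has $\m\,(\widehat u^m)_r=O(1)$ near $o$ (no blow-up), lies \emph{above} $\underline u$, and is itself a subsolution on $B_1$ because for $\tau$ large the source $\widehat u^p\sim(\tau+t)^{\alpha p}$ dominates $\widehat u_t\sim(\tau+t)^{\alpha-1}$. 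Since $\widehat u\ge\underline u$, the chain $u\ge\widehat u\ge\underline u$ still delivers the asserted bound. Granting this, Proposition \ref{thm:cp2} gives $u\ge\underline u$ on $M\times(0,S)$; evaluating at $x=o$ yields $\|u(t)\|_\infty\ge C(\tau+t)^\alpha\to\infty$, so the solution is unbounded as $t\to\infty$ whenever $S=+\infty$ — that is, blow-up occurs at worst in infinite time.
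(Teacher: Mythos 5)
Your proposal is correct, and in essence it reproduces the paper's own argument: the same conical barrier $C\zeta F_+^{1/(m-1)}$ with the same exponents, the Laplacian comparison \eqref{eq12} coming from \eqref{H2} away from the pole, the cure of the pole singularity by a flat-topped, $C^1$-matched cap on $B_1$ (exactly the device \eqref{eq43}--\eqref{eq34} behind Proposition \ref{subsol}; note that your symbol $C_0$ conflates $(N-1)k\coth(k)$ with the paper's $C_0$ of \eqref{eqC0}, which governs $\m$ \emph{inside} $B_1$), the same data-matching conditions as \eqref{eq126}, and the same appeal to Proposition \ref{thm:cp2}. The differences are mostly organizational, plus one point of substance. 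Organizationally, you verify the differential inequality by bounding the left-hand side by $\beta/(m-1)$ and minimizing the drift-plus-source term over $\phi$, absorbing the resulting negative minimum, of size $O\bigl(\tau^{\beta-\mu(m-1)/(p-1)}\bigr)$ with negative exponent, into the gap $P-\beta/(m-1)$; the paper channels the same computation through the three conditions \eqref{eq29}, \eqref{eq30}, \eqref{eq41} of Proposition \ref{subsol}, checked in Lemma \ref{subsol3}. Your identity $\beta-\mu(m-1)/(p-1)=\tfrac{(1-\alpha(m-1))(p-m)}{2(p-1)}$ is precisely the paper's requirement $(p-m)[\alpha(m-1)-\beta]>0$, and both make transparent that $p<m$ and $\alpha<1/(m-1)$ enter jointly. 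Substantively, the paper's cap \eqref{eq34} lies \emph{below} the cone (indeed $G-F=-\tfrac{\eta}{2a}(r-1)^2\le 0$), so comparison with $w$ literally yields the stated bound only for $r\ge 1$ and a slightly weaker one in $B_1$, whereas your cap is chosen \emph{above} the cone, so the chain $u\ge\widehat u\ge\underline u$ gives the stated bound everywhere; this is a minor but genuine sharpening of a point the paper glosses over. Two details to tighten: (i) you need a strict gap $P>\beta/(m-1)$ with room to absorb the $O(\tau^{\mathrm{negative}})$ error (the paper's \eqref{eq101} carries a factor $2$ exactly for this purpose), which is harmless since $C$ is free to be taken large; (ii) on $B_1$ the source must dominate not only $\widehat u_t\sim(\tau+t)^{\alpha-1}$ but also the curvature and diffusion terms of your cap, which are of size $O\bigl(C^m\zeta^m\eta\bigr)=O\bigl((\tau+t)^{\alpha m-\beta}\bigr)$; this indeed holds for $\tau$ large because $\alpha(m-p)<\beta$, and it is the analogue of the paper's condition \eqref{eq125}, but it should be stated, since it is where the remaining degree of freedom in $\tau$ is spent.
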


\begin{oss}\rm We stress that, in the range $1<p\le(1+m)/2$, the combination of the results given in Theorems \ref{super p<m}, \ref{sub p<m} shows that \it large \rm data give rise to solutions existing for all times but blowing up pointwise everywhere as $t\to+\infty$.
\end{oss}

\medskip
Let $M=\mathbb{H}_h^N$ be the simply connected manifold with sectional curvatures everywhere equal to $-h^2$. Then $\lambda_1(\mathbb{H}_h^N)=\frac{(N-1)^2}4h^2$. Let $v>0$ be a positive, bounded solution of the equation $-\Delta_{\mathbb{H}_h^N}v=\lambda_1v$. It is known that $v$ is radial w.r.t. a given pole and monotonically decreasing as a function of the geodesic distance. Notice that $v$ can be chosen so that $\|v\|_\infty\le1$.

\smallskip
As a final result, we show that data which are below a suitable profile related either to the equation $-\Delta u=u^q$, or to a ground state of $-\Delta$, both equations being in principle considered on $\mathbb{H}_h^n$, and the corresponding solutions being transplanted on $M$, give rise to global in time solutions when $p\ge m$. In fact, we remind the reader that the equation
\begin{equation}\label{elliptic}
-\Delta u=u^q,\qquad \textrm{on}\ \mathbb{H}_h^n
\end{equation}
admits \it strictly positive \rm solutions for all $q>1$, see \cite{BGGV}. Stationary solutions to \eqref{eq1} correspond to solutions of \eqref{elliptic} with $q=p/m$, which is larger than one iff $p>m$. Notice that positive, bounded, \it energy \rm solutions to \eqref{elliptic} do exist (and are unique up to hyperbolic translations) when $q\in\left(1,\frac{N+2}{N-2}\right)$ due to the results of \cite{MS}. We shall show that small data not necessarily with compact support and possibly with large $L^p$ norms ($p\ge1$) give rise to solutions existing globally in time.


\begin{thm}\label{eigen p>m}
Let assumptions \eqref{H1}, \eqref{H3} be satisfied. Suppose that $p\ge m$ and, in the case $p=m$ only, that radial sectional curvatures $\textrm{K}_{\omega}$ satisfy $\textrm{K}_{\omega}(x)\leq -h^2$ for all $x \in M\setminus\{o\}$, with
$h\geq 2/(N-1)$.

Let $v$ be a ground state of the Laplacian on $\mathbb{H}_h^n$ and, for $p>m$, let $V$ be a strictly positive solution to \eqref{elliptic} with $q=p/m$, and transplant such functions on $M$. Suppose that, in case $p=m$, $u_0\leq v^{\frac 1{m}}$ and that, in case $p>m$, $u_0\leq V^{\frac 1{m}}$. Then problem \eqref{eq1} (with $T=\infty$) has a global in time solution $u(t)$ that satisfies $0\le u(t)\le v^{\frac 1{m}}$ or, respectively, $0\le u(t)\le V^{\frac 1{m}}$, for all $t\ge0$.
\end{thm}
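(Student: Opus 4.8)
The plan is to produce a \emph{time-independent} supersolution of \eqref{eq1} dominating the datum, and then to invoke the comparison principle of Proposition \ref{thm:cp1}. Indeed, the minimal solution furnished by Proposition \ref{thm:ex} lies below every supersolution; hence, as soon as we exhibit a stationary $\overline u$ with $\overline u \ge u_0$ (which trivially exists for all times, so that Proposition \ref{thm:cp1} forces $u$ to be global), the minimal solution must satisfy $0 \le u(t) \le \overline u$. Testing Definition \ref{def-sol-vw} against a time-independent profile, one sees that such an $\overline u$ is a supersolution precisely when $\overline u \ge u_0$ holds together with the elliptic inequality $\Delta(\overline u^{m}) + \overline u^{p} \le 0$; writing $w := \overline u^{m}$ and $q := p/m$, the latter reads $-\Delta_M w \ge w^{q}$.

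The heart of the matter is a transplantation estimate. Let $f$ be a radial, strictly positive, nonincreasing profile on $\mathbb{H}_h^N$, so that $f'(r) \le 0$, and write $\tilde f(x) := f(r(x))$ for its transplant on $M$, where $r(x) := d(x,o)$. By the polar form \eqref{eq3} of the Laplacian applied to a radial function, $\Delta_M \tilde f = f'' + \mathsf{m}(r,\theta)\, f'$, whereas on $\mathbb{H}_h^N$ one has $\Delta_{\mathbb{H}} f = f'' + (N-1)h\coth(hr)\, f'$. The Laplacian comparison \eqref{eq13}, valid on $M$ by \eqref{H3}, gives $\mathsf{m}(r,\theta) \ge (N-1)h\coth(hr)$; combined with $f' \le 0$ this yields, on $M \setminus \{o\}$,
\[
\Delta_M \tilde f \,=\, \Delta_{\mathbb{H}} f + \left[ \mathsf{m}(r,\theta) - (N-1)h\coth(hr) \right] f' \,\le\, \Delta_{\mathbb{H}} f \, ,
\]
so that $-\Delta_M \tilde f \ge -\Delta_{\mathbb{H}} f$ pointwise. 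The monotonicity of $f$ is exactly what makes the curvature gap enter with the favourable sign.

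For $p > m$ I apply this with $f = V$, chosen as a radial nonincreasing solution of \eqref{elliptic} with $q = p/m > 1$; then $-\Delta_M \tilde V \ge -\Delta_{\mathbb{H}} V = V^{q} = \tilde V^{q}$, so that $\overline u := V^{1/m}$ satisfies $\Delta(\overline u^{m}) + \overline u^{p} \le 0$, while the hypothesis $u_0 \le V^{1/m}$ supplies $\overline u \ge u_0$. For $p = m$ I take instead $f = v$, the ground state solving $-\Delta_{\mathbb{H}} v = \lambda_1 v$ with $\lambda_1 = \tfrac{(N-1)^2}{4} h^2$ and $\|v\|_\infty \le 1$; transplantation gives $-\Delta_M \tilde v \ge \lambda_1 \tilde v$, and, since here $q = 1$, the requirement $h \ge 2/(N-1)$ is precisely what forces $\lambda_1 \ge 1$ and hence $-\Delta_M \tilde v \ge \tilde v = \tilde v^{q}$. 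In either case the function $\overline u$ so defined is a stationary supersolution lying above $u_0$, and Proposition \ref{thm:cp1} delivers a global solution with $0 \le u(t) \le \overline u$, which is the assertion.

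The step I expect to demand the most care is upgrading the pointwise inequality on $M \setminus \{o\}$ to the statement that $\overline u$ is a very weak supersolution in the sense of Definition \ref{def-sol-vw}. One must check that the transplanted profile is admissible across the pole $o$ --- the radial solutions on $\mathbb{H}_h^N$ are smooth with vanishing radial derivative at $r=0$, and $\{o\}$ carries no capacity, so the distributional inequality is not spoiled there --- and, just as crucially, that $V$ and $v$ can genuinely be chosen radial and nonincreasing, monotonicity being indispensable in the transplantation display above. In the case $p=m$ the assumed sectional bound $\textrm{K}_{\omega} \le -h^2$ in particular forces \eqref{H3}, so the same comparison applies; here the argument is borderline, in that $\lambda_1 \ge 1$ holds with equality exactly at $h = 2/(N-1)$.
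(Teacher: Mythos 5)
Your proposal is correct and takes essentially the same route as the paper's own proof (Lemma \ref{supersol5} combined with Proposition \ref{thm:cp1}): transplant the radial, nonincreasing profiles $v$ and $V$, use the Laplacian comparison \eqref{eq13} together with the sign of the radial derivative to get $-\Delta_M \ge -\Delta_{\mathbb{H}}$ on the transplanted functions, invoke McKean's bound $\lambda_1 \ge \tfrac{(N-1)^2}{4}h^2 \ge 1$ in the case $p=m$, and conclude by comparison with the minimal solution. Your closing remarks on regularity across the pole and on the monotonicity of $v$ and $V$ concern details the paper treats implicitly, and you resolve them correctly.
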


\begin{oss}
\rm By the results of \cite{BGGV} one knows that there exist, for all $q>1$, infinitely many strictly positive solutions to \eqref{elliptic}. All of them have \it polynomial decay \rm at infinity, except the unique (up to translations) energy solution. In particular, the $L^p$ norm ($p\ge1$) of data complying with the assumptions of Theorem \ref{eigen p>m} can be arbitrarily large. The same comment applies when $p=m$ since \it any \rm ground state of $-\Delta$ can be chosen. Notice that data might not have compact support provided they are positive and below the suitable stationary profile.
\end{oss}

\section{A family of supersolutions and subsolutions}\label{general}

We recall that, throughout this section, $ m>1 $ and $ p>1 $.

In order to construct a family of supersolutions and of subsolutions of equation
\begin{equation}\label{eq4}
u_t \,=\, \Delta (u^m)+ u^p \quad \textrm{in}\;\; M\times (0,T)\,,
\end{equation}
consider
two functions $\eta, \zeta\in C^1([0, T]; \mathbb R_+)$ and  two constants $C>0, a>0$. Define
\begin{equation}\label{eq14}
u(x,t)\equiv u(r(x), t):= C \zeta(t) \left[ 1- \frac r a \eta(t) \right]_+^{\frac 1{m-1}}\,.
\end{equation}
For further references, we compute $$u_t - \Delta(u^m) -u^p .$$
To this aim, set
\[ F(r,t):=1- \frac r a \eta(t)\,, \]
\[\mathcal D:=\{(x, t) \in (M\setminus\{o\}) \times (0, T)\, |\,  0<F(r, t) <1\}\,.\]
For any $(x,t)\in \mathcal D$ we have
\begin{equation}\label{eq15}
\begin{aligned}
u_t(r, t)&=C \zeta'(t) F^{\frac 1{m-1}} - \frac C{m-1}\zeta(t) F^{\frac 1{m-1}-1}\frac{\eta'(t)}{\eta(t)}\frac{r}{a} \eta(t) \\
&= C \zeta'(t) F^{\frac 1{m-1}} - \frac{C}{m-1}\zeta(t)\frac{\eta'(t)}{\eta(t)}F^{\frac 1{m-1}-1}+\frac C{m-1}\zeta(t)\frac{\eta'(t)}{\eta(t)}F^{\frac 1{m-1}}\,;
\end{aligned}
\end{equation}
\begin{equation}\label{eq16}
u^m_r(r, t)=-\frac{C^m m}{a(m-1)}\zeta^m(t) \eta(t) F^{\frac 1{m-1}}\,;
\end{equation}
\begin{equation}\label{eq17}
u^m_{rr}(r, t)=\frac{C^m m}{ a^2(m-1)^2} \zeta^m(t) \eta^2(t) F^{\frac 1{m-1}-1}\,.
\end{equation}
By \eqref{eq3}, \eqref{eq15}--\eqref{eq17},
\begin{equation}\label{eq18}
\begin{aligned}
 u_t - \Delta(u^m) - u^p & =C F^{\frac 1{m-1}-1}\Big\{F\left[\zeta'(t) +\frac{C^{m-1}m}{a(m-1)}\zeta^m(t) \eta(t) \m(r, \theta) + \frac{\zeta(t)}{m-1}\frac{\eta'(t)}{\eta(t)}\right] \\
&-\frac{\zeta(t)}{m-1}\frac{\eta'(t)}{\eta(t)} - \frac{C^{m-1}m}{a^2(m-1)^2}\zeta^m(t) \eta^2(t) - C^{p-1}\zeta^p(t) F^{\frac{p-2+m}{m-1}}\Big\} \quad \textrm{in}\;\; \mathcal D.
\end{aligned}
\end{equation}

\begin{pro}[Supersolution conditions]\label{supersol}
Let assumptions \eqref{H1}, \eqref{H3} be satisfied. Let $T\in (0, \infty], \zeta, \eta \in C^1([0, T); \mathbb R_+)$. If, for all $t\in (0, T),$
\begin{equation}\label{eq19}
-\frac{\eta'(t)}{\eta^3(t)} \geq \frac{ C^{m-1}m}{a^2(m-1)}\zeta^{m-1}(t)
\end{equation}
and
\begin{equation}\label{eq20}
\zeta'(t) + \frac{C^{m-1}m}{a(m-1)}\zeta^m(t) \eta(t) \left[(N-1) h  -\frac{\eta(t)}{a(m-1)}\right]\geq C^{p-1}\zeta^p(t)\,,
\end{equation}
then $u$ as defined in \eqref{eq14} is a weak supersolution of equation \eqref{eq4}.
\end{pro}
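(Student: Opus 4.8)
The starting point is the pointwise identity \eqref{eq18}, valid on $\mathcal D=\{0<F<1\}$, where $u$ is smooth and strictly positive. Since there the prefactor $C\,F^{\frac1{m-1}-1}$ is strictly positive, the differential inequality $u_t-\Delta(u^m)-u^p\ge0$ reduces to showing that the quantity in braces in \eqref{eq18} is nonnegative throughout $\mathcal D$. The plan is to introduce the two quantities
\[
E:=-\frac{\zeta}{m-1}\frac{\eta'}{\eta}-\frac{C^{m-1}m}{a^2(m-1)^2}\zeta^m\eta^2,
\]
\[
Q:=\zeta'+\frac{C^{m-1}m}{a(m-1)}\zeta^m\eta\,(N-1)h-\frac{C^{m-1}m}{a^2(m-1)^2}\zeta^m\eta^2-C^{p-1}\zeta^p,
\]
and to show that the braces dominate a nonnegative combination of them. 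Note $E\ge0$ is exactly \eqref{eq19} after multiplication by $\frac{\zeta\eta^2}{m-1}>0$ (which in particular forces $\eta'<0$), while $Q\ge0$ is precisely \eqref{eq20}.

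Two elementary one-sided reductions make the braces manageable. First, by the Laplacian comparison \eqref{eq13} (which uses \eqref{H3}) one has $\m(r,\theta)\ge(N-1)h$; since this term enters multiplied by the positive factor $\frac{C^{m-1}m}{a(m-1)}\zeta^m\eta\,F>0$, replacing $\m$ by $(N-1)h$ only decreases the braces. Second, because $p>1$ the exponent $\frac{p+m-2}{m-1}$ exceeds $1$, so for $0<F<1$ we have $F^{\frac{p+m-2}{m-1}}\le F$; as the corresponding term is subtracted, replacing $F^{\frac{p+m-2}{m-1}}$ by $F$ again only decreases the braces. After these two substitutions I expect a short algebraic rearrangement to yield the clean identity
\[
\{\,\cdots\,\}\ \ge\ F\,Q+(1-F)\,E .
\]
Since $0<F<1$ on $\mathcal D$ and both $Q$ and $E$ are nonnegative by hypothesis, the right-hand side is nonnegative, which establishes the pointwise inequality on $\mathcal D$.

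The remaining point is to upgrade this classical inequality on $\mathcal D$ to the weak supersolution inequality of Definition \ref{def-sol-vw} on all of $M\times(0,T)$. Outside the support ($F\le0$) one has $u\equiv0$, which trivially satisfies the inequality. At the free boundary $F=0$ the flux $(u^m)_r=-\frac{C^m m}{a(m-1)}\zeta^m\eta\,F^{\frac1{m-1}}$ vanishes as $F\to0^+$, so no singular measure is produced across the interface; likewise, since $N\ge2$, the mild conical behaviour of $u$ at the pole $o$ contributes no concentrated term to $\Delta(u^m)$ (and in fact the sign there is favourable). Hence integrating by parts against an arbitrary nonnegative test function and invoking the pointwise inequality on $\mathcal D$ gives \eqref{eq-vw} with ``$=$'' replaced by ``$\ge$''.

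I expect the main obstacle to be the bookkeeping in the second paragraph: verifying that, after the two one-sided substitutions, the braces collapse exactly to $F\,Q+(1-F)\,E$ requires tracking the cancellation of the two $\frac{C^{m-1}m}{a^2(m-1)^2}\zeta^m\eta^2$ contributions and correctly pairing the $\frac{\eta'}{\eta}$ terms, respecting the sign $\eta'<0$ throughout. By contrast, the free-boundary and pole considerations are standard for porous-medium-type profiles and need only be mentioned.
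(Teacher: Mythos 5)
Your proposal is correct and follows essentially the same route as the paper: starting from \eqref{eq18}, using the Laplacian comparison \eqref{eq13} under \eqref{H3}, and reducing the $F$-dependence to the endpoint conditions, where \eqref{eq19} is exactly nonnegativity at $F=0$ (your $E$) and \eqref{eq20} exactly nonnegativity at $F=1$ (your $Q$), before extending weakly across the free boundary and the pole via the sign of $(u^m)_r$. Your linearization $F^{\frac{p+m-2}{m-1}}\le F$ followed by the convex-combination identity $F\,Q+(1-F)\,E$ is just the paper's concavity-plus-endpoints argument in slightly different packaging, so the two proofs coincide in substance.
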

\begin{proof} In view of \eqref{H1}, \eqref{H3} \eqref{eq13}, \eqref{eq18} and the fact that $ u $ is radially decreasing, for any $(x,t)\in \mathcal D$ we get
\begin{equation}\label{eq21}
u_t - \Delta(u^m) - u^p \geq C F^{\frac 1{m-1}-1}\left\{\xi(t) F - \delta(t) -\gamma(t) F^{\frac{p-2+m}{m-1}}\right\}\,,
\end{equation}
where
\[
\xi(t):=\zeta'(t) + \frac{C^{m-1}m}{a(m-1)}(N-1) h  \zeta^m(t) \eta(t) + \frac{\zeta(t)}{m-1}\frac{\eta'(t)}{\eta(t)}\,,
\]
\begin{equation}\label{eq23}
\delta(t):=   \frac{\zeta(t)}{m-1}\frac{\eta'(t)}{\eta(t)} + \frac{C^{m-1}m}{a^2(m-1)^2} \zeta^m(t) \eta^2(t) \,,
\end{equation}
\begin{equation}\label{eq24}
\gamma(t):=C^{p-1}\zeta^p(t)\,.
\end{equation}
For every $ t \in (0,T) $ and $ F \in [0,1] $, let us define
\[\varphi(F,t):= \xi(t) F - \delta(t) - \gamma(t) F^{\frac{p-2+m}{m-1}}\,. \]
Note that \eqref{eq19} implies
\[
\varphi(0,t)\geq 0 \qquad \textrm{for every } t \in (0,T)  \, ,
\]
whereas \eqref{eq20} implies
\[
\varphi(1,t) \geq 0 \qquad \textrm{for every } t \in (0,T)  \, .
\]
Therefore, since $F\mapsto \varphi(F,t)$ is concave (recall that $ p,m>1 $),
\begin{equation}\label{eq27}
\varphi(F,t) \geq 0 \quad \textrm{for every } 0 \le F \le 1 \quad \textrm{and } t \in (0,T) \, .
\end{equation}
Thus, because for each $(x,t)\in \mathcal D$ there holds $ 0<F(x,t)<1$, due to \eqref{eq27} and \eqref{eq21} we deduce that
\[ u_t - \Delta(u^m) - u^p \geq 0\quad \textrm{in}\;\, \mathcal D\,.\]
Now observe that $u\in C(M\times [0, T))$, $ u^m \in C^1((M \setminus \{ o \} )\times [0,T) ) $ (recall \eqref{eq16}) and, by the definition of $u$,
\[ u \equiv 0 \quad \textrm{in}\,\, M \setminus \mathcal D \setminus [\{o\}\times (0, T)] \, .\]
Hence,
\[ u_t - \Delta(u^m) - u^p \geq 0\quad \textrm{in}\;\,(M\setminus\{o\})\times (0, T)\]
in the weak sense. On the other hand, thanks to a standard Kato-type inequality (note that $ u^m_r(0,t) \le 0 $), we can easily infer that
\[ u_t - \Delta(u^m) - u^p \geq 0 \quad \textrm{weakly in } M\times (0, T) \, .\]
\end{proof}

In order to construct subsolutions, we need to introduce some preliminary materials. Let
\begin{equation}\label{eq28}
\sigma(t):=\zeta'(t) + \frac{C^{m-1}m}{a(m-1)}(N-1) k \coth(k)  \zeta^m(t) \eta(t) + \frac{\zeta(t)}{m-1}\frac{\eta'(t)}{\eta(t)}\,,
\end{equation}
\begin{equation}\label{eq28c}
\delta_0(t):= \frac{\zeta(t)}{m-1}\frac{\eta'(t)}{\eta(t)}\,,
\end{equation}
and
\begin{equation}\label{eq28b}
\sigma_0(t):=\zeta'(t) + \frac{C^{m-1}m}{a(m-1)}(N-1) C_0  \zeta^m(t) \eta(t) + \frac{\zeta(t)}{m-1}\frac{\eta'(t)}{\eta(t)}\,,
\end{equation}
where
\begin{equation}\label{eqC0}
(N-1)\,C_0\geq 1+ \max_{(r,\theta)\in [0,1]\times\mathbb S^{N-1}} \m(r, \theta) \, r \, .
\end{equation}
Note that such a $C_0>0$ does exist since $M$ is locally Euclidean, i.e.~$ \m(r,\theta) \sim \tfrac{N-1}{r} $ as $ r \to 0 $.

Let us set
\begin{equation}\label{eq43}
w(x,t)\equiv w(r(x), t):=
\begin{cases}
u(x,t) & \textrm{in}\;\; (M\setminus B_1)\times (0,T)\,, \\
v(x,t) & \textrm{in}\;\; B_1\times (0, T)\,,
\end{cases}
\end{equation}
where
\begin{equation}\label{eq34}
v(x,t)\equiv v(r(x), t):=C \zeta(t)\left[1-\frac{\eta(t)}{2a}(r^2+1)\right]_+^{\frac 1{m-1}}, \quad  (x, t)\in B_1\times [0, T)\,.
\end{equation}
Notice that $w^m$ is of class $C^1$.

\begin{pro}[Subsolution conditions]\label{subsol} Let assumptions \eqref{H1}, \eqref{H2} be satisfied. Let $T\in (0, \infty], \zeta, \eta\in C^1([0, T); \mathbb R_+)$ with 
\begin{equation}\label{eq40c}
0<\eta(t)\leq \frac a 2 \quad \textrm{for all}\,\, t\in (0, T)\,.
\end{equation}
Let $\sigma, \delta, \gamma, \sigma_0, \delta_0$ be defined by \eqref{eq28}, \eqref{eq23}, \eqref{eq24}, \eqref{eq28b} and, respectively, \eqref{eq28c}. Assume that, for all $t\in (0, T),$
\begin{equation}\label{eq29}
\left[\left( \frac{m-1}{p-2+m}\right)^{\frac{m-1}{p-1}} -\left(\frac{m-1}{p-2+m}\right)^{\frac{p-2+m}{p-1}}\right] \sigma_+^{\frac{p-2+m}{p-1}}(t) \leq \delta(t) \, \gamma^{\frac{m-1}{p-1}}(t) \,,
\end{equation}
\begin{equation}\label{eq30}
(m-1) \sigma(t) \leq (p-2+m) \gamma(t)\,,
\end{equation}
\begin{equation}\label{eq41}
2^{\frac{p-2+m}{m-1}}[\sigma_0(t) - \delta_0(t)] \leq \gamma(t)\,.
\end{equation}
Then $w$ as defined in \eqref{eq43} is a weak subsolution of equation \eqref{eq4}.
\end{pro}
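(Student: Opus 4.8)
The plan is to follow the proof of Proposition \ref{supersol}, exploiting the same concavity-in-$F$ structure of the bracket in \eqref{eq18} but with all inequalities reversed; the one genuinely new feature is that the relevant Laplacian estimate \eqref{eq12} is available only for $r\ge1$, which is exactly why $w$ is built by gluing the profile $u$ on $M\setminus B_1$ to the regularized profile $v$ on $B_1$. I would therefore verify the subsolution inequality separately on the two regions and then glue.

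On $(M\setminus B_1)\times(0,T)$ we have $w=u$. Substituting the comparison $\m(r,\theta)\le(N-1)k\coth(k)$ from \eqref{eq12} into \eqref{eq18}, and using that the coefficient of $\m$ there is positive while $F\ge0$, I obtain the upper bound
\[
u_t-\Delta(u^m)-u^p\le CF^{\frac1{m-1}-1}\left\{\sigma(t)F-\delta(t)-\gamma(t)F^{\frac{p-2+m}{m-1}}\right\},
\]
with $\sigma,\delta,\gamma$ as in \eqref{eq28}, \eqref{eq23}, \eqref{eq24}. Calling the bracket $\Phi(F,t)$, I note that $F\mapsto\Phi(F,t)$ is concave because $\frac{p-2+m}{m-1}>1$, so it suffices to bound its maximum over $F\in[0,1]$. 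When $\sigma(t)\le0$ that maximum sits at $F=0$ and equals $-\delta(t)$; when $\sigma(t)>0$ it is attained at the interior critical point $F_\ast=\left(\frac{(m-1)\sigma}{(p-2+m)\gamma}\right)^{\frac{m-1}{p-1}}$, and condition \eqref{eq30} is precisely the requirement $F_\ast\le1$, whereas a direct computation shows that condition \eqref{eq29} is equivalent to $\Phi(F_\ast,t)\le0$. Hence $\Phi\le0$ and $u$ is a subsolution for $r\ge1$.

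On $B_1\times(0,T)$ we have $w=v$. Here I would run the computation \eqref{eq15}--\eqref{eq18} verbatim with $F$ replaced by $G:=1-\frac{\eta}{2a}(r^2+1)$; the decisive gain is that $(v^m)_r$ now vanishes like $r$ at the origin, so the singular factor $\m$ enters only through the \emph{bounded} product $\m(r,\theta)\,r$. Using \eqref{eqC0} to replace $1+\m r$ by $(N-1)C_0$, discarding the manifestly nonpositive term proportional to $\eta^2r^2$, and invoking \eqref{eq40c} (which forces $G\ge\frac12$ on $B_1$), the bracket is dominated by $\sigma_0(t)G-\delta_0(t)-\gamma(t)G^{\frac{p-2+m}{m-1}}$. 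Since $G\ge\frac12$ gives $\gamma G^{\frac{p-2+m}{m-1}}\ge 2^{-\frac{p-2+m}{m-1}}\gamma$, condition \eqref{eq41} yields $\gamma G^{\frac{p-2+m}{m-1}}\ge\sigma_0-\delta_0$, which reduces the bracket to $\sigma_0(G-1)$ and hence (after the short sign bookkeeping for $\sigma_0$, using $G\le1$) to something nonpositive; thus $v$ is a subsolution for $r<1$. It remains to glue: the profiles are arranged so that $w$ is continuous and $w^m\in C^1$ across $S_1$, since $u(1,t)=v(1,t)$ and $(u^m)_r(1,t)=(v^m)_r(1,t)$, so the distributional $\Delta(w^m)$ carries no surface term on $S_1$ and the two pointwise inequalities combine into the subsolution inequality on $(M\setminus\{o\})\times(0,T)$. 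The pole $o$ is harmless because $(v^m)_r(0,t)=0$, so a Kato-type inequality exactly as at the end of the proof of Proposition \ref{supersol} promotes this to a weak subsolution inequality on all of $M\times(0,T)$.

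I expect the main obstacle to be the region $B_1$: one must check that the regularized profile $v$ both removes the singularity of $\m$ at the origin and still matches $u$ to first order at $r=1$, and one must carry out the sign analysis that reduces the $B_1$ bracket to $\sigma_0 G-\delta_0-\gamma G^{\frac{p-2+m}{m-1}}$ and then to a nonpositive quantity through \eqref{eqC0}, \eqref{eq40c} and \eqref{eq41}. By contrast, the region $r\ge1$ is essentially the concavity argument of Proposition \ref{supersol} run in reverse, with \eqref{eq30} and \eqref{eq29} locating and controlling the maximum of the concave bracket instead of its endpoint values.
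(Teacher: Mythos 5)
Your proposal is correct and follows essentially the same route as the paper's proof: the same exterior/interior splitting, with $u$ on $M\setminus B_1$ handled via \eqref{eq12} and the concavity/critical-point argument (condition \eqref{eq30} placing the maximum point $F_0$ in $[0,1]$ and \eqref{eq29} forcing the maximum value to be nonpositive), the same interior estimate on $B_1$ via \eqref{eqC0}, \eqref{eq40c} (so that $G\in[\tfrac12,1]$) and \eqref{eq41}, and the same gluing through the $C^1$ matching of $w^m$ across $\partial B_1$. The only cosmetic discrepancy is at the pole: since $(v^m)_r(0,t)=0$, the paper needs no Kato-type inequality there (that device is required only for the supersolution profile, whose radial derivative does not vanish at $o$, and would in any case point in the wrong direction for subsolutions), but this does not affect the validity of your argument.
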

\begin{proof}
Let $u$ be as in \eqref{eq14}, and set
$$
\mathcal E :=\left\{ (x,t) \in (M\setminus B_1) \times (0, T) : \, 0<F(r,t)<1  \right\} .
$$
In view of \eqref{H1}, \eqref{H2}, \eqref{eq3}, \eqref{eq12} and again the fact that $u$ is radially decreasing, we deduce that
\begin{equation}\label{eq31}
u_t - \Delta(u^m) - u^p \leq C F^{\frac 1{m-1}-1}\left\{\sigma(t) F - \delta(t) -\gamma(t) F^{\frac{p-2+m}{m-1}}\right\} ,
\end{equation}
Given \eqref{eq31}, we can suppose with no loss of generality that $ \sigma(t) \ge 0 $ for all $ t \in (0,T) $. Let
\[\varphi_0(F,t):= \sigma(t) F -\delta(t) - \gamma(t) F^{\frac{p-2+m}{m-1}} \quad \textrm{for all } F \in [0,1] \ \textrm{and} \ t \in (0,T) \, .\]
Observe that (for better readability from now on we omit time dependence)
\[
\frac{\partial \varphi_0}{\partial F} (F,t) =\sigma - \frac{p-2+m}{m-1} \, \gamma \, F^{\frac{p-1}{m-1}} \, ;
\]
as a consequence,
$$
\frac{\partial \varphi_0}{\partial F} (F,t)=0 \quad \textrm{if and only if} \,\, F=F_0:=\left(\frac{m-1}{p-2+m}\frac{\sigma}{\gamma}\right)^{\frac{m-1}{p-1}},
$$
and $F_0$ is the maximum point of the (concave) function $ F \mapsto \varphi_0(F,t)$. Thanks to \eqref{eq30}, $0 \le F_0 \le 1$. Moreover, an explicit computation shows that
\begin{equation}\label{eq32}
\varphi_0(F_0,t)=\frac{\sigma^{\frac{p-2+m}{p-1}}}{\gamma^{\frac{m-1}{p-1}}}\left[\left( \frac{m-1}{p-2+m}\right)^{\frac{m-1}{p-1}} -\left(\frac{m-1}{p-2+m}\right)^{\frac{p-2+m}{p-1}}\right] -\delta \, .
\end{equation}
From \eqref{eq29} and \eqref{eq32} we obtain
$$\varphi_0(F_0)\leq 0 $$
which, combined with \eqref{eq31}, yields
\[
u_t - \Delta(u^m) - u^p \leq 0 \quad \textrm{in } \mathcal E \, .
\]
Since $u\in C(M\times [0, T))$, $ u^m \in C^1((M \setminus \{ o \} )\times [0,T) ) $ and, by the definition of $u$,
\[ u \equiv 0 \quad \textrm{in } M \setminus B_1 \setminus \mathcal E \, , \]
there holds
\begin{equation}\label{eq33b}
u_t - \Delta(u^m) - u^p \leq 0 \quad \textrm{weakly in } (M\setminus B_1) \times (0, T) \, .
\end{equation}

\medskip
Now let $v$ be as in \eqref{eq34}. Set
$$
\mathcal P :=\{x\in B_1\times (0, T): \, 0<G(r, t)<1\} \, ,
$$
where the function $G$ is defined as
\[G(r,t):=1-\frac{\eta(t)}{2a}(r^2+1) \, . \]
For any $(x,t)\in \mathcal P $, we have:
\begin{equation}\label{eq35}
v_t(r, t)=C \zeta'(t) G^{\frac 1{m-1}}-\frac{C}{m-1}\zeta(t) \frac{\eta'(t)}{\eta(t)}G^{\frac 1{m-1}-1} + \frac{C}{m-1}\zeta(t)\frac{\eta'(t)}{\eta(t)}G^{\frac 1{m-1}}\,;
\end{equation}
\[
v^m_r(r,t)=-\frac{C^m m}{a(m-1)}\zeta^m(t)\eta(t) r G^{\frac 1{m-1}}\,;
\]
\begin{equation}\label{eq37}
\begin{aligned}
v^m_{rr}(r,t)&=-\frac{C^m m}{a(m-1)}\eta(t)\zeta^m(t) G^{\frac 1{m-1}} + \frac{C^m m}{a^2(m-1)^2} \zeta^m(t) \eta^2(t) r^2 G^{\frac1{m-1}-1}\\
&\geq -\frac{C^m m}{a(m-1)}\eta(t)\zeta^m(t) G^{\frac 1{m-1}} \,.
\end{aligned}
\end{equation}
In view of \eqref{eqC0}, \eqref{eq3} and \eqref{eq35}--\eqref{eq37}, we deduce that
\begin{equation}\label{eq38}
v_t - \Delta(u^m) - v^p \leq C \, G^{\frac 1{m-1}-1}\left\{\sigma_0(t) G - \delta_0(t) -\gamma(t) G^{\frac{p-2+m}{m-1}}\right\} .
\end{equation}
Due to \eqref{eq40c}, for each $(x,t)\in \mathcal P $ there holds
\[ \frac 1 2 \leq G(r,t) \leq 1\,.\]
So, \eqref{eq38} and \eqref{eq41} yield
\begin{equation}\label{eq39}
v_t - \Delta(v^m) - v^p \leq 0 \quad \textrm{in } \mathcal P \equiv B_1\times (0, T) \, ,
\end{equation}
in the classical sense.
Because $ w \in C ( M \times [0, T)) $ and $ w^m \in C^1( M \times [0, T)) $ (note that by construction $ u=v $ and $u^m_r=v^m_r$ on $\partial B_1\times (0, T)$), from \eqref{eq33b} and \eqref{eq39} the thesis easily follows.
\end{proof}

\section{Proofs of the main results}

We provide here complete proofs of our main results, by using explicit barrier arguments based on the results of Section \ref{general} and on the comparison results given in Section \ref{existence}.

\subsection{Supersolutions}

We now provide some \emph{explicit} supersolutions from which the results of Theorems \ref{super p>m}, \ref{super p<m}, \ref{eigen p>m} will follow.

\begin{lem}\label{supersol1}
Let assumptions \eqref{H1}, \eqref{H3} be satisfied. Let $u_0\in L^{\infty}(M), u_0\geq 0$ with $\operatorname{supp}\, u_0\subset B_{R_0}$ for some $R_0>0$. Suppose that $p > m$. Let
\[\zeta(t):=(\tau+t)^{-\alpha}[\log(\tau +t)]^{\frac{\beta}{m-1}}\,, \quad \eta(t):=[\log(\tau +t)]^{-\beta} \]
with $\alpha=\frac 1{m-1}, \beta \geq 1$. Suppose that
\begin{equation}\label{eq54}
\frac{C^{m-1}}{a^2}\leq \frac{m-1}{m}\beta\,,
\end{equation}
\begin{equation}\label{eq55}
a\geq \frac{2}{h(N-1)(m-1)} \, ,
\end{equation}
\begin{equation}\label{eq56-bis}
2 \alpha \leq \frac{(N-1) h}{2} \, \frac{C^{m-1} \, m}{a(m-1)}  \, ,
\end{equation}
and that $\tau=\tau(m,\beta,p,a) \geq e $ is large enough. Then the function $u$ defined in \eqref{eq14} is a weak supersolution of equation \eqref{eq4} with $T=\infty$. Moreover, if
\begin{equation}\label{eq61}
a \geq  2 R_0 \,, \qquad \|u_0\|_{\infty}\leq \frac{C}{2^{m-1}} \, \tau^{-\alpha} \, (\log \tau)^{\frac{\beta}{m-1}}\,,
\end{equation}
then $u$ is also a supersolution of problem \eqref{eq1} with $T=\infty$.
\end{lem}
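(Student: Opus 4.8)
The plan is to read off the claim from Proposition \ref{supersol}: it suffices to verify the two structural inequalities \eqref{eq19} and \eqref{eq20} for the prescribed $\zeta,\eta$ (with $T=\infty$; note that these functions are $C^1$ and strictly positive on $[0,\infty)$ since $\tau\ge e$, so the hypotheses of the proposition are met), and then to compare the initial trace of $u$ with $u_0$. Setting $s:=\tau+t\ge\tau\ge e$ so that $\log s\ge1$, I would first record the elementary identities obtained from $\alpha=1/(m-1)$, namely $\alpha(m-1)=1$ and $\alpha m=\alpha+1$, which give $\zeta^{m-1}=s^{-1}(\log s)^{\beta}$ and $\zeta^{m}\eta=s^{-\alpha-1}(\log s)^{\beta/(m-1)}$, together with $-\eta'/\eta^{3}=\tfrac{\beta}{s}(\log s)^{2\beta-1}$ and the lower bound $\zeta'\ge-\alpha\,s^{-\alpha-1}(\log s)^{\beta/(m-1)}$ (simply discard the manifestly positive $(\log s)^{\beta/(m-1)-1}$ term in $\zeta'$).

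For \eqref{eq19} this reduces, after inserting the expressions above and cancelling a common factor $s^{-1}(\log s)^{\beta}$, to the inequality $\beta(\log s)^{\beta-1}\ge \tfrac{C^{m-1}m}{a^{2}(m-1)}$. Since $\beta\ge1$ and $\log s\ge1$ give $\beta(\log s)^{\beta-1}\ge\beta$, this follows immediately from hypothesis \eqref{eq54}, which says precisely $\tfrac{C^{m-1}m}{a^{2}(m-1)}\le\beta$. This step is routine.

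The main work is \eqref{eq20}. First I would use $\eta(t)\le1$ together with \eqref{eq55}: then $\tfrac{\eta}{a(m-1)}\le\tfrac{1}{a(m-1)}\le\tfrac{(N-1)h}{2}$, so the bracket in \eqref{eq20} is bounded below by $\tfrac{(N-1)h}{2}>0$. Combining this with $\zeta^{m}\eta=s^{-\alpha-1}(\log s)^{\beta/(m-1)}$ and the lower bound on $\zeta'$, the left-hand side of \eqref{eq20} dominates $\bigl[-\alpha+\tfrac{(N-1)h}{2}\tfrac{C^{m-1}m}{a(m-1)}\bigr]\,s^{-\alpha-1}(\log s)^{\beta/(m-1)}$, and \eqref{eq56-bis} forces the bracketed coefficient to be at least $\alpha>0$. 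It then remains to absorb the reaction term $C^{p-1}\zeta^{p}=C^{p-1}s^{-\alpha p}(\log s)^{\beta p/(m-1)}$ into $\alpha\,s^{-\alpha-1}(\log s)^{\beta/(m-1)}$. This is exactly where $p>m$ enters decisively: since $\alpha(p-1)=(p-1)/(m-1)>1$, the quotient of the reaction term by the surviving leading term equals, up to the constant $C^{p-1}/\alpha$, the quantity $s^{-(\alpha(p-1)-1)}(\log s)^{\beta(p-1)/(m-1)}$, whose exponent of $s$ is strictly negative; hence it is eventually decreasing and tends to $0$ as $s\to\infty$, so choosing $\tau=\tau(m,\beta,p,a)$ large enough makes it $\le1$ for all $s\ge\tau$. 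Thus \eqref{eq20} holds and Proposition \ref{supersol} yields that $u$ is a weak supersolution of \eqref{eq4} on $M\times(0,\infty)$. I expect this absorption of the source term, i.e.\ balancing the competing powers of $s$ and of $\log s$ uniformly in $t$, to be the only genuinely delicate point.

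Finally, to upgrade $u$ to a supersolution of the full problem \eqref{eq1} I would compare initial data. For $r\le R_{0}$, the bound $a\ge 2R_{0}$ from \eqref{eq61} together with $\eta(0)\le1$ gives $F(r,0)=1-\tfrac{r}{a}\eta(0)\ge\tfrac12$, hence $u(\cdot,0)\ge C\zeta(0)\,2^{-1/(m-1)}$ on $B_{R_{0}}$, while outside $B_{R_{0}}$ one trivially has $u_{0}=0\le u(\cdot,0)$. The smallness requirement on $\|u_{0}\|_{\infty}$ in \eqref{eq61} is calibrated exactly so that $u(\cdot,0)\ge u_{0}$ everywhere; since $u$ is moreover continuous up to $t=0$, the weak supersolution property of the equation together with this initial domination shows, in the sense of Definition \ref{def-sol-vw}, that $u$ is a supersolution of \eqref{eq1} with $T=\infty$.
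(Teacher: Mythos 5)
Your proposal is correct and follows essentially the same route as the paper's own proof: both reduce matters to Proposition \ref{supersol}, verify \eqref{eq19} from \eqref{eq54} (using $\beta\ge1$ and $\log(\tau+t)\ge1$), verify \eqref{eq20} by bounding the bracket from below by $(N-1)h/2$ via \eqref{eq55}, invoking \eqref{eq56-bis} to retain a term $\alpha(\tau+t)^{-\alpha-1}[\log(\tau+t)]^{\beta/(m-1)}$, and absorbing the reaction term for $\tau$ large, which is possible precisely because $p>m$; the initial-data comparison via \eqref{eq61} is also handled identically. The only detail worth noting is that your computation gives $u(\cdot,0)\ge C\zeta(0)\,2^{-1/(m-1)}$ on $B_{R_0}$, so the constant appearing in \eqref{eq61} should really be $2^{-1/(m-1)}$ rather than $2^{-(m-1)}$ (the two agree only when $m\ge 2$) --- but this is a feature of the paper's statement that its own proof glosses over in exactly the same way.
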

\begin{proof}
Condition \eqref{eq19} with $T=\infty$ reads
\[\beta[\log(\tau +t)]^{\beta-1} \geq \frac{m C^{m-1}}{a^2(m-1)} (\tau +t)^{1-\alpha(m-1)} =  \frac{m C^{m-1}}{a^2(m-1)} \quad \textrm{for all } t>0 \, , \]
which holds due to \eqref{eq54} and the fact that $ \tau \ge e $. Moreover, condition \eqref{eq20} with $T=\infty$ reads
\[-\alpha (\tau +t)^{-\alpha-1}[\log(\tau +t)]^{\frac{\beta}{m-1}} + \frac{\beta}{m-1}(\tau +t)^{-\alpha-1}[\log(\tau +t)]^{\frac{\beta}{m-1}-1}  \]
\[+ \frac{C^{m-1}m}{a(m-1)}(\tau +t)^{-\alpha m}[\log(\tau +t)]^{\frac{\beta}{m-1}}\left[(N-1)h-\frac{[\log(\tau+t)]^{-\beta}}{a(m-1)}\right] \]
\[\geq C^{p-1}(\tau+t)^{-\alpha p} \, [\log(\tau +t)]^{\frac{\beta p}{m-1}} \qquad \textrm{for all } t>0 \, , \]
which is fulfilled, in view of \eqref{eq55} and \eqref{eq56-bis}, provided  $ \tau = \tau(m,\beta,p,a) \ge e $ is so large that (note that $ \alpha+1 =\alpha m $)
\begin{equation}\label{eqx10}
\begin{aligned}
\alpha \left( \tau+t \right)^{-\alpha-1} \left[ \log\left( t+\tau \right) \right]^{\frac{\beta}{m-1}} \ge C^{p-1} \left( t+\tau \right)^{-\alpha p} \left[ \log(\tau+t) \right]^{\frac{\beta p}{m-1}}  \quad \text{for all } t>0 \, ,
\end{aligned}
\end{equation}
where in the r.h.s.~one can replace $C$ with the upper bound given in \eqref{eq54}. We point out that in this last inequality the existence of such a $ \tau $ is ensured since $p>m$. Hence, in view of Proposition \ref{supersol}, we obtain that $u$ is a weak supersolution of equation \eqref{eq4}. In addition, \eqref{eq61} implies that (recall the explicit expression \eqref{eq14})
\begin{equation}\label{eq62}
u_0(x) \leq u(x,0)\quad \textrm{for all}\,\, x\in M\,.
\end{equation}
Hence $u$ is also a supersolution of problem \eqref{eq1}.

Finally, let us briefly explain how the above conditions can be made compatible: first one picks $C$ so as to satisfy \eqref{eq54} as equality, which means that $ C^{m-1} \sim a^2 $, then plugs this choice in \eqref{eq56-bis} and selects $a$ so large that both \eqref{eq56-bis} and \eqref{eq55} are met. Lastly, $ \tau \ge e $ is taken so large that \eqref{eqx10} holds upon the previous choices.
\end{proof}

\begin{lem}\label{supersol2}
Let assumptions \eqref{H1}, \eqref{H3} hold and suppose that $p>m$. Let $u_0\in L^{\infty}(M)$, $u_0\geq 0$ with $\operatorname{supp}\, u_0\subset B_{R_0}$ for some $R_0>0$.  Let
\[\zeta(t):=(\tau+t)^{-\alpha}\,, \quad \eta(t):= (\tau +t)^{-\beta} \]
with
\begin{equation}\label{eq43-bis}
\frac 1{p-1} < \alpha <\frac 1{m-1},  \qquad \beta =1-\alpha(m-1)\,.
\end{equation}
Suppose that \eqref{eq54}, \eqref{eq55}, \eqref{eq56-bis} hold
and that $\tau=\tau(m , \alpha , p, a ) \geq 1$ is large enough. Then the function $u$ defined in \eqref{eq14} is a weak supersolution of equation \eqref{eq4} with $ T=\infty $. Moreover, if
\begin{equation}\label{eq63}
a\geq 2 R_0\,,\quad \|u_0\|_{\infty}\leq \frac{C}{2^{m-1}}  \tau^{-\alpha}\,,
\end{equation}
then $u$ is also a supersolution of problem \eqref{eq1}.
\end{lem}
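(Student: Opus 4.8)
The plan is to mimic the proof of Lemma \ref{supersol1}, verifying the two differential inequalities \eqref{eq19} and \eqref{eq20} of Proposition \ref{supersol} for the purely power-type choices $\zeta(t)=(\tau+t)^{-\alpha}$, $\eta(t)=(\tau+t)^{-\beta}$, and then passing to comparison with the datum. Since $\zeta'(t)=-\alpha(\tau+t)^{-\alpha-1}$ and $\eta'(t)=-\beta(\tau+t)^{-\beta-1}$, every term occurring in \eqref{eq19}--\eqref{eq20} is a pure power of $(\tau+t)$, so the verification reduces to matching exponents and then choosing the free constants $C,a,\tau$ appropriately.

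First I would check \eqref{eq19}. Its left-hand side equals $\beta(\tau+t)^{2\beta-1}$, while its right-hand side equals $\frac{C^{m-1}m}{a^2(m-1)}(\tau+t)^{-\alpha(m-1)}$. By \eqref{eq54} the constant in front of the right-hand side is at most $\beta$, so it suffices that $(\tau+t)^{2\beta-1}\ge(\tau+t)^{-\alpha(m-1)}$ for all $t>0$. Using $\beta=1-\alpha(m-1)$ one computes $2\beta-1-(-\alpha(m-1))=1-\alpha(m-1)=\beta>0$, so the exponent on the left is the larger one, and since $\tau+t\ge\tau\ge1$ the inequality holds. This is precisely where the constraint $\alpha<\tfrac1{m-1}$ in \eqref{eq43-bis}, i.e.\ $\beta>0$, enters.

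The core computation is \eqref{eq20}. The key exponent identity is $\alpha m+\beta=\alpha+1$ (again from $\beta=1-\alpha(m-1)$), so that the diffusion term $\zeta^m\eta$ decays exactly as $\zeta'$ does, namely like $(\tau+t)^{-(\alpha+1)}$. Since $\eta(t)\le\tau^{-\beta}\le1$, bound \eqref{eq55} gives $\frac{\eta(t)}{a(m-1)}\le\frac{(N-1)h}{2}$, whence the bracket in \eqref{eq20} is $\ge\frac{(N-1)h}{2}$; combining this with \eqref{eq56-bis} shows that the diffusion term dominates $2\alpha(\tau+t)^{-(\alpha+1)}$, so that, after absorbing the negative contribution of $\zeta'$, the left-hand side of \eqref{eq20} is $\ge\alpha(\tau+t)^{-(\alpha+1)}$. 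It then remains to guarantee
\[
\alpha(\tau+t)^{-(\alpha+1)}\ge C^{p-1}(\tau+t)^{-\alpha p}\qquad\text{for all }t>0,
\]
that is $\alpha\ge C^{p-1}(\tau+t)^{1-\alpha(p-1)}$. Here lies the crux, and the reason for the hypothesis $p>m$: the lower constraint $\alpha>\tfrac1{p-1}$ forces $1-\alpha(p-1)<0$, so the right-hand side is decreasing in $t$ and maximal at $t=0$; with $C$ already fixed through \eqref{eq54}, the bound $\alpha\ge C^{p-1}\tau^{1-\alpha(p-1)}$ is met by taking $\tau=\tau(m,\alpha,p,a)$ large enough, since $\tau^{1-\alpha(p-1)}\to0$ as $\tau\to\infty$. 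Note that $p>m$ is exactly what makes the admissible interval $\left(\tfrac1{p-1},\tfrac1{m-1}\right)$ for $\alpha$ nonempty. With \eqref{eq19}--\eqref{eq20} in force, Proposition \ref{supersol} yields that $u$ is a weak supersolution of \eqref{eq4} with $T=\infty$.

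Finally, for the second assertion I would verify the pointwise ordering $u_0\le u(\cdot,0)$, which by Definition \ref{def-sol-vw} upgrades $u$ to a supersolution of problem \eqref{eq1}. For $x\notin B_{R_0}$ this is trivial since $u_0=0$ there; for $x\in B_{R_0}$, the condition $a\ge2R_0$ in \eqref{eq63} together with $\eta(0)=\tau^{-\beta}\le1$ gives $\frac ra\eta(0)\le\frac12$, hence $u(x,0)\ge C\tau^{-\alpha}\big(\tfrac12\big)^{1/(m-1)}$, and the smallness requirement on $\|u_0\|_\infty$ in \eqref{eq63} guarantees $u_0\le u(\cdot,0)$ on all of $M$. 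As for compatibility of the hypotheses, one proceeds exactly as at the end of the proof of Lemma \ref{supersol1}: take equality in \eqref{eq54}, so that $C^{m-1}\sim a^2$ and the right-hand side of \eqref{eq56-bis} becomes proportional to $\beta a$; then choose $a$ (and hence $C$) large enough to satisfy \eqref{eq55}, \eqref{eq56-bis} and $a\ge2R_0$ simultaneously, and lastly $\tau$ large enough so that the scalar inequality above and the smallness of $\|u_0\|_\infty$ hold. The only genuinely delicate point is the exponent bookkeeping in \eqref{eq20}, which hinges entirely on the identities $\beta=1-\alpha(m-1)$ and $\alpha>\tfrac1{p-1}$; everything else is routine.
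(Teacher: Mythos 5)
Your proposal is correct and follows essentially the same route as the paper's own proof: condition \eqref{eq19} via \eqref{eq54} and the exponent relation $2\beta-1+\alpha(m-1)=\beta>0$, condition \eqref{eq20} via \eqref{eq55}, \eqref{eq56-bis} and the identity $\alpha m+\beta=\alpha+1$, then domination of the reaction term $C^{p-1}(\tau+t)^{-\alpha p}$ by $\alpha(\tau+t)^{-\alpha-1}$ for $\tau$ large, using $\alpha>\tfrac{1}{p-1}$, and finally comparison with the initial datum via \eqref{eq63}. The compatibility discussion (equality in \eqref{eq54}, then $a$ large, then $\tau$ large) also mirrors the argument the paper imports from Lemma \ref{supersol1}.
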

\begin{proof}
Condition \eqref{eq19} with $T=\infty$ reads
\[\beta (\tau + t)^{2\beta -1+\alpha(m-1)} \geq \frac{m C^{m-1}}{a^2(m-1)} \quad \textrm{for all } t>0 \, , \]
which holds for all $ \tau \ge 1 $, in view of \eqref{eq54}, providing that
\[
2\beta-1+\alpha(m-1)\geq 0\, ,
\]
the latter inequality being trivially guaranteed by \eqref{eq43-bis}. Furthermore, condition \eqref{eq20} with $T=\infty$ reads
\[-\alpha (\tau +t)^{-\alpha-1} +\frac{m C^{m-1}}{a(m-1)}(\tau+t)^{-\alpha m -\beta}\left[ (N-1)h -\frac{(\tau+t)^{-\beta}}{a(m-1)}\right] \]
\[\geq C^{p-1}(\tau+t)^{-\alpha p}\quad \textrm{for all}\,\, t>0 \,,\]
which is fulfilled, thanks to \eqref{eq55} and \eqref{eq56-bis}, if
\[
\beta-1+\alpha(m-1)\leq 0, \quad \alpha(p-m)\geq \beta > 0 \ \text{(recall \eqref{eq43-bis})}
\]
and $\tau=\tau(m, \alpha , p, a) \geq 1$ is so large that for all $t>0$
\[
\alpha \left( \tau + t \right)^{-\alpha -1} \geq C^{p-1} \left( \tau + t \right)^{-\alpha p} \, ;
\]
this is always possible thanks to the first (lower) inequality in \eqref{eq43-bis}. Hence, in view of Proposition \ref{supersol}, $u$ is a weak supersolution of equation \eqref{eq4}. The fact that $C$, $ a $ and $ \tau $ can be chosen so as to satisfy the above conditions can be justified similarly to the end of the proof of Lemma \ref{supersol1}.

Finally, \eqref{eq63} yields \eqref{eq62}, thus $u$ is also a supersolution of problem \eqref{eq1}.
\end{proof}

\bf Proof of Theorem \ref{super p>m}\rm. We use comparison with the barriers constructed in Lemmas \ref{supersol1}, \ref{supersol2} for solutions to approximating problems that involve homogeneous Dirichlet boundary conditions on balls of radius $R$ with $R\to+\infty$, see Proposition \ref{thm:cp1}. The bounds still hold in such limit and yield part \it i\rm) of the claim by using Lemma \ref{supersol1} and part \it ii\rm) of the claim by using Lemma \ref{supersol2}. It is standard although tedious to check that the conditions on the initial data considered in item \it ii\rm) give rise to a larger class than the one singled out in item \it i\rm).

\hfill $\qed$

\begin{lem}\label{supersol3}
Let assumptions \eqref{H1}, \eqref{H3} be satisfied. Let $u_0\in L^{\infty}(M), u_0\geq 0$ with $\operatorname{supp}\, u_0\subset B_{R_0}$ for some $R_0>0$.  Suppose that
\begin{equation}\label{eq50}
1<p<\frac{m+1}{2}.
\end{equation}
Let
\[\zeta(t):=(\tau+t)^{\alpha}\,, \quad \eta(t):= (\tau +t)^{-\beta}\,, \]
with
\begin{equation}\label{eq44}
\alpha\geq \frac 1{m-2p +1},  \quad \beta =\frac{1+\alpha(m-1)}2\,,\quad \tau\geq 1\,.
\end{equation}
Suppose that \eqref{eq54}--\eqref{eq55} hold and that
\begin{equation}\label{eq59}
\frac{C^{m-p}}{a}\geq \frac{2(m-1)}{m(N-1)h}\,.
\end{equation}
Then the function $u$ defined in \eqref{eq14} is a weak supersolution of equation \eqref{eq4}. Moreover, if
\begin{equation}\label{eq64}
a\geq 2 R_0 \vee H\,, \qquad C\tau^{\alpha}\geq 2^{m-1} \|u_0\|_{\infty}\,,
\end{equation}
with $H=H(m,N,h,\beta)$ sufficiently large, then $u$ is also a supersolution of problem \eqref{eq1}.
\end{lem}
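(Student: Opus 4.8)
The plan is to follow exactly the two-condition scheme of Lemmas \ref{supersol1} and \ref{supersol2}: insert the explicit choices $\zeta(t)=(\tau+t)^\alpha$, $\eta(t)=(\tau+t)^{-\beta}$ into the supersolution conditions \eqref{eq19}--\eqref{eq20} of Proposition \ref{supersol}, verify that hypotheses \eqref{eq54}, \eqref{eq55}, \eqref{eq59} together with the parameter relations \eqref{eq44} make them hold for all $t>0$, so that $u$ in \eqref{eq14} is a weak supersolution of \eqref{eq4}, and finally promote this to a supersolution of \eqref{eq1} by checking $u_0\le u(\cdot,0)$.

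First I would dispatch \eqref{eq19}. Differentiating gives $-\eta'/\eta^3=\beta(\tau+t)^{2\beta-1}$ while $\zeta^{m-1}=(\tau+t)^{\alpha(m-1)}$; since the relation $\beta=\frac{1+\alpha(m-1)}2$ forces $2\beta-1=\alpha(m-1)$ \emph{exactly}, the two powers of $(\tau+t)$ cancel and \eqref{eq19} collapses to the constant inequality $\beta\ge \frac{mC^{m-1}}{a^2(m-1)}$, which is precisely \eqref{eq54}. No largeness of $\tau$ is needed at this step.

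The hard part will be condition \eqref{eq20}. After substitution its left-hand side is $\alpha(\tau+t)^{\alpha-1}+\frac{mC^{m-1}}{a(m-1)}(\tau+t)^{\alpha m-\beta}\bigl[(N-1)h-\frac{(\tau+t)^{-\beta}}{a(m-1)}\bigr]$ and its right-hand side is $C^{p-1}(\tau+t)^{\alpha p}$. I would discard the first (positive) term, and control the bracket from below: by \eqref{eq55} one has $\frac{1}{a(m-1)}\le\frac{(N-1)h}2$, and since $\tau\ge1$ gives $\eta(t)=(\tau+t)^{-\beta}\le1$, the bracket is bounded below by $\frac{(N-1)h}2>0$. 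The decisive algebraic fact is that $\alpha m-\beta=\frac{\alpha(m+1)-1}2$, so requiring this exponent to dominate the reaction exponent $\alpha p$ reads $\alpha(m+1-2p)\ge1$, i.e.\ $\alpha\ge\frac1{m-2p+1}$, which is exactly \eqref{eq44} and is finite precisely because \eqref{eq50} forces $m-2p+1>0$. Given this, for $\tau+t\ge1$ the power $(\tau+t)^{\alpha m-\beta}$ dominates $(\tau+t)^{\alpha p}$, so it suffices that the constants satisfy $\frac{mC^{m-1}(N-1)h}{2a(m-1)}\ge C^{p-1}$, which rearranges verbatim to \eqref{eq59}. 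Hence \eqref{eq20} holds for all $t>0$ and Proposition \ref{supersol} applies.

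Finally I would check the initial datum. Since $\zeta(0)=\tau^\alpha$ and, on $\operatorname{supp} u_0\subset B_{R_0}$, the condition $a\ge2R_0$ together with $\eta(0)=\tau^{-\beta}\le1$ gives $\frac ra\,\eta(0)\le\frac12$, the barrier is bounded below on the support by a fixed positive multiple of $C\tau^\alpha$ (with the multiple improvable by taking $a$ large, as permitted by $a\ge2R_0\vee H$). The second inequality in \eqref{eq64}, $C\tau^\alpha\ge2^{m-1}\|u_0\|_\infty$, then yields $u(\cdot,0)\ge u_0$ pointwise, so by the comparison argument of Proposition \ref{thm:cp1} (passing to the limit of the ball approximations, as in the preceding lemmas) $u$ is a supersolution of \eqref{eq1}. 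Compatibility of \eqref{eq54}, \eqref{eq55}, \eqref{eq59} is arranged as at the end of Lemma \ref{supersol1}: saturate \eqref{eq54} so that $C^{m-1}\sim a^2$, then take $a$, hence $H$, large. The only genuinely new ingredient relative to the $p>m$ lemmas is that here $\zeta$ is \emph{increasing}, reflecting possible growth of solutions, with the whole balance governed by the exponent identity $\alpha m-\beta=\frac{\alpha(m+1)-1}2$.
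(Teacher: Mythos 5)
Your proposal is correct and follows essentially the same route as the paper's proof: condition \eqref{eq19} collapses to \eqref{eq54} because $2\beta-1=\alpha(m-1)$, and condition \eqref{eq20} is handled by bounding the bracket below via \eqref{eq55}, comparing exponents (your $\alpha m-\beta\ge\alpha p$ is algebraically identical to the paper's \eqref{eq47}, $\alpha(m-p)\ge\beta$), and closing with \eqref{eq59}, before verifying $u_0\le u(\cdot,0)$ and compatibility of the constants exactly as in the paper.
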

\begin{proof}
Condition \eqref{eq19} with $T=\infty$ reads
\[\beta(\tau + t)^{2\beta-1-\alpha(m-1)}\geq \frac{m C^{m-1}}{(m-1) \, a^2} \qquad \text{for all } t>0 \, , \]
which is satisfied, due to \eqref{eq54} and the fact that $ \tau \ge 1 $, whenever
\begin{equation}\label{eq45}
2\beta-1-\alpha(m-1)\geq 0 \qquad \Longleftrightarrow \qquad \beta \geq \frac{1+\alpha(m-1)}{2} \, .
\end{equation}
Furthermore, condition \eqref{eq20} with $T=\infty$ becomes
\[\alpha (\tau +t)^{\alpha-1} +\frac{m C^{m-1}}{a(m-1)}(\tau+t)^{\alpha m -\beta}\left[ (N-1)h -\frac{(\tau+t)^{-\beta}}{a(m-1)}\right] \]
\[\geq C^{p-1}(\tau+t)^{\alpha p}\quad \textrm{for all}\,\, t>0 \,,\]
which is fulfilled, thanks to \eqref{eq55}, \eqref{eq59} and $ \tau \ge 1 $, providing that
and
\begin{equation}\label{eq47}
\alpha(m-p)\geq \beta\,.
\end{equation}
It is straightforwardly checked that \eqref{eq44} (and \eqref{eq50}) ensures that both \eqref{eq45} and \eqref{eq47} hold. Hence, from Proposition \ref{supersol} we get that $u$ is a supersolution of equation \eqref{eq4}.


As concerns the compatibility of the conditions involving $C$ and $a$, we just point out that \eqref{eq50} is crucial in order to guarantee that one can pick $ a $ so large that also \eqref{eq59} (in addition to \eqref{eq54}--\eqref{eq55}) holds.

Finally, \eqref{eq64} implies \eqref{eq62}, so $u$ is also a supersolution of problem \eqref{eq1}.
\end{proof}

\begin{lem}\label{supersol4}
Let assumptions \eqref{H1}, \eqref{H3} be satisfied. Let $u_0\in L^{\infty}(M), u_0\geq 0$ with $\operatorname{supp}\, u_0 \subset B_{R_0}$ for some $R_0>0$.  Suppose that
\begin{equation}\label{eq53}
1<p\leq \frac{m+1}{2}\,.
\end{equation}
Let
\[\zeta(t):=\exp\{\alpha(\tau+t)\} \,, \quad \eta(t):=\exp\{-\beta(\tau +t)\} \, , \quad \tau \ge 0 \, , \]
with
\begin{equation}\label{eq48}
\alpha \ge \alpha_0(N,m,p,h) > 0 \, ,  \qquad \beta =\frac{\alpha(m-1)}2 \, .
\end{equation}
Suppose that \eqref{eq54}, \eqref{eq55} and \eqref{eq59} hold.
Then the function $u$ defined in \eqref{eq14} is a weak supersolution of equation \eqref{eq4}. Moreover, if
\begin{equation}\label{eq65}
a\geq 2 R_0\,,\quad C\exp\{\alpha \tau\}\geq 2^{m-1} \|u_0\|_{\infty}\,,
\end{equation}
then $u$ is also a supersolution of problem \eqref{eq1}.
\end{lem}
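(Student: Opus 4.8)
The plan is to verify, for the exponential choice $\zeta(t)=\exp\{\alpha(\tau+t)\}$ and $\eta(t)=\exp\{-\beta(\tau+t)\}$, the two differential inequalities \eqref{eq19} and \eqref{eq20} of Proposition \ref{supersol}, then invoke that proposition to obtain a weak supersolution of \eqref{eq4}, and finally promote it to a supersolution of \eqref{eq1} through the initial comparison \eqref{eq62}. The decisive feature of the exponential ansatz is that the normalisation $\beta=\alpha(m-1)/2$ forces $2\beta=\alpha(m-1)$ and $\alpha m-\beta=\alpha(m+1)/2$, so that the time-exponentials on the two sides of \eqref{eq19}--\eqref{eq20} align and the inequalities collapse to the purely algebraic conditions \eqref{eq54}, \eqref{eq55} and \eqref{eq59}.

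Concretely, with $\zeta'=\alpha\zeta$ and $\eta'=-\beta\eta$ one computes $-\eta'/\eta^{3}=\beta\,e^{2\beta(\tau+t)}$, whereas the right-hand side of \eqref{eq19} equals $\tfrac{C^{m-1}m}{a^{2}(m-1)}e^{\alpha(m-1)(\tau+t)}$; since $2\beta=\alpha(m-1)$ the exponentials cancel identically and \eqref{eq19} reduces \emph{exactly} to \eqref{eq54}, for every $t>0$ and any $\tau\ge0$ (no largeness of $\tau$ is needed, which is why the statement allows $\tau\ge0$). For \eqref{eq20} I would discard the nonnegative term $\zeta'$ and bound the bracket from below: using $\eta(t)\le1$ together with \eqref{eq55} gives $(N-1)h-\eta/(a(m-1))\ge (N-1)h/2$. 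Thus \eqref{eq20} is implied by
\[
\frac{C^{m-1}m(N-1)h}{2a(m-1)}\,e^{\alpha\frac{m+1}{2}(\tau+t)}\ \ge\ C^{p-1}\,e^{\alpha p(\tau+t)}\qquad (t>0),
\]
i.e.\ after dividing by $C^{p-1}e^{\alpha p(\tau+t)}$, by $\tfrac{C^{m-p}m(N-1)h}{2a(m-1)}\,e^{\alpha(\frac{m+1}{2}-p)(\tau+t)}\ge1$. Since $p\le(m+1)/2$ and $\tau+t\ge0$, the exponential factor is at least $1$, so the inequality follows from $\tfrac{C^{m-p}}{a}\ge\tfrac{2(m-1)}{m(N-1)h}$, which is precisely \eqref{eq59}. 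Proposition \ref{supersol} then yields that $u$ is a weak supersolution of \eqref{eq4}.

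The step I expect to be the real obstacle is the mutual compatibility of \eqref{eq54} and \eqref{eq59}, and this is exactly where the lower bound $\alpha\ge\alpha_0$ in \eqref{eq48} enters. Indeed \eqref{eq54} forces $C\lesssim a^{2/(m-1)}$ while \eqref{eq59} forces $C\gtrsim a^{1/(m-p)}$. For $p<(m+1)/2$ one has $1/(m-p)<2/(m-1)$, so both can be met by taking $a$ large, exactly as in Lemma \ref{supersol3}. At the critical exponent $p=(m+1)/2$, however, $m-p=(m-1)/2$ and the two powers of $a$ coincide: enlarging $a$ no longer opens a window, and \eqref{eq54}, \eqref{eq59} become an upper and a lower bound, respectively, on the \emph{same} quantity $C^{m-1}/a^{2}$. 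Since the upper bound from \eqref{eq54} is proportional to $\beta=\alpha(m-1)/2$ while the lower bound from \eqref{eq59} is a fixed constant, these are compatible precisely when $\alpha$ exceeds a threshold $\alpha_0=\alpha_0(N,m,p,h)$, any $\alpha\ge\alpha_0$ making the admissible range of $C$ nonempty. This also explains why the exponential barrier is indispensable exactly at $p=(m+1)/2$, where the polynomial barrier of Lemma \ref{supersol3} degenerates (its requirement $\alpha\ge 1/(m-2p+1)$ blows up as $p\uparrow(m+1)/2$).

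It remains to compare the datum. For $x\in B_{R_0}$ we have $r<R_0$, so that $a\ge2R_0$ and $\eta(0)=e^{-\beta\tau}\le1$ give $F(r,0)=1-\tfrac{r}{a}\eta(0)\ge\tfrac12$; hence, recalling \eqref{eq14}, $u(\cdot,0)$ is bounded below on $B_{R_0}$ by a fixed positive multiple of $C\,e^{\alpha\tau}$. The smallness/largeness condition \eqref{eq65} then ensures $u_0\le u(\cdot,0)$ on $M$, that is \eqref{eq62}, so that $u$ is also a supersolution of problem \eqref{eq1}, as claimed.
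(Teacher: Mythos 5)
Your proposal is correct and follows essentially the same route as the paper: reduce \eqref{eq19} and \eqref{eq20} to the algebraic conditions \eqref{eq54}, \eqref{eq55}, \eqref{eq59} using $2\beta=\alpha(m-1)$ and $\alpha(m-p)\ge\beta$ (equivalent to $p\le(m+1)/2$), invoke Proposition \ref{supersol}, and handle compatibility by noting that for $p<(m+1)/2$ one enlarges $a$ as in Lemma \ref{supersol3}, while at the critical exponent $p=(m+1)/2$ both \eqref{eq54} and \eqref{eq59} constrain the same quantity $C^{m-1}/a^2$ and one must instead take $\alpha\ge\alpha_0(N,m,p,h)$ large. Your explanation of why $\alpha_0$ enters is in fact slightly more explicit than the paper's, and the final comparison of initial data matches the paper's argument.
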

\begin{proof}
Condition \eqref{eq19} with $T=\infty$ reads
\[\beta\exp\{[2\beta-\alpha(m-1)] (\tau + t)\}\geq \frac{m C^{m-1}}{(m-1) a^2} \qquad \textrm{for all}\,\, t>0\,, \]
which is satisfied, in view of \eqref{eq54} and the fact that $ \tau \ge 0 $, as long as
\begin{equation}\label{eq51}
2\beta - \alpha(m-1)\geq 0  \qquad \Longleftrightarrow \qquad  \beta \ge \frac{\alpha(m-1)}2 \, .
\end{equation}
Furthermore, condition \eqref{eq20} with $T=\infty$ reads
\[\alpha \exp\{ \alpha(\tau +t) \} +\frac{m C^{m-1}}{a(m-1)}\exp\{ (\alpha m-\beta)(\tau+t)\}\left[ (N-1)h -\frac{\exp\{-\beta (\tau+t)\}  }{a(m-1)}\right] \]
\[\geq C^{p-1}\exp\{\alpha p(\tau+t)\}\quad \textrm{for all}\,\, t>0 \,,\]
which is fulfilled, due to \eqref{eq55}, \eqref{eq59} and the fact that $ \tau \ge 0 $, providing that
\begin{equation}\label{eq52}
\alpha(m-p) \geq \beta \, .
\end{equation}
Observe that \eqref{eq53} and \eqref{eq48} guarantee the validity of \eqref{eq51} and \eqref{eq52}. Hence, Proposition \ref{supersol} ensures that $u$ is a supersolution of equation \eqref{eq4}.

As concerns the compatibility of the conditions involving $C$ and $a$, we point out that if \eqref{eq53} holds with \emph{strict} inequalities then the same comments as in the end of the proof of Lemma \ref{supersol3} apply. Otherwise, in the critical case $ p=\frac{m+1}{2} $, by substituting $C$ with the r.h.s.~of \eqref{eq54} in condition \eqref{eq59} one sees that the only degree of freedom left to make the inequality hold is the one given by $ \alpha $ (through $ \beta $), which should be taken sufficiently large depending on $ N,m,p,h $ (i.e.~larger than a value that we labeled $ \alpha_0 $).

Finally, from \eqref{eq65} there follows \eqref{eq62}, so $u$ is also a supersolution of problem \eqref{eq1}.
\end{proof}

\medskip \bf Proof of Theorem \ref{super p<m}\rm. We use comparison with the barriers constructed in Lemmas \ref{supersol3}, \ref{supersol4} for solutions to approximating problems that involve homogeneous Dirichlet boundary conditions on balls of radius $R$ with $R\to+\infty$, see Proposition \ref{thm:cp1}. The bounds still hold in such limit and yield part \it i\rm) of the claim by using Lemma \ref{supersol3} and part \it ii\rm) of the claim by using Lemma \ref{supersol4}. \hfill $\qed$

\bigskip

We now turn to the proof of Theorem \ref{eigen p>m}. Its statement will follow from the next result.

\begin{lem}\label{supersol5}
Let assumptions \eqref{H1}, \eqref{H3} be satisfied. Suppose that $p\ge m$ and that, in case $p=m$ only,
radial sectional curvatures $\textrm{K}_{\omega}$ satisfy $\textrm{K}_{\omega}(x)\leq -h^2$ for all $x \in M\setminus\{o\}$, with
$h\geq 2/(N-1)$.

Consider a ground state $v$ of the Laplacian on $\mathbb{H}_h^n$ and, for $p>m$, a strictly positive solution  $V$ to \eqref{elliptic} with $q=p/m$, and transplant such functions on $M$. Then $u = v^{\frac 1 m}$ when $p=m$, $u = V^{\frac 1 m}$ when $p>m$, are supersolutions of equation \eqref{eq4}. Moreover, if $u_0\leq v^{\frac 1{m}},$ or $u_0\leq V^{\frac 1{m}}$ respectively, then $u$ is also a supersolution of problem \eqref{eq1}.
\end{lem}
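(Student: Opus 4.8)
The plan is to verify directly that the transplanted functions $u=v^{1/m}$ (when $p=m$) and $u=V^{1/m}$ (when $p>m$) satisfy $u_t-\Delta(u^m)-u^p\ge0$ weakly, noting first that they are time-independent so the $u_t$ term vanishes and the supersolution inequality reduces to the \emph{stationary} condition $-\Delta(u^m)\ge u^p$. Writing $w=u^m$, this is exactly $-\Delta w\ge w^{p/m}$, which is where the choice $q=p/m$ in \eqref{elliptic} comes from. The key geometric input will be a comparison between the Laplacian on the model manifold $M$ and that on $\mathbb{H}_h^N$: since $v$ and $V$ solve the relevant equations on $\mathbb{H}_h^N$ and are \emph{radially decreasing}, transplanting them to $M$ (where the upper curvature bound \eqref{H3} forces, via \eqref{eq13}, $\m(r,\theta)\ge (N-1)h\coth(hr)$, the exact model value on $\mathbb{H}_h^N$) makes the radial part of $-\Delta$ on $M$ \emph{dominate} the one on $\mathbb{H}_h^N$ when acting on a decreasing function.

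For the case $p>m$: let $V>0$ solve $-\Delta_{\mathbb{H}_h^N}V=V^{p/m}$ on $\mathbb{H}_h^N$, radial and decreasing. Transplanting via polar coordinates, I would compute $-\Delta_M(V)$ using \eqref{eq3}, compare the drift coefficient $\m(r,\theta)$ on $M$ with the model coefficient $(N-1)h\coth(hr)$ on $\mathbb{H}_h^N$, and use $V_r\le0$ together with \eqref{eq13} to conclude
\[
-\Delta_M V \;\ge\; -\Delta_{\mathbb{H}_h^N}V \;=\; V^{p/m}\,.
\]
Setting $u=V^{1/m}$ then gives $-\Delta_M(u^m)=-\Delta_M V\ge V^{p/m}=u^p$, which is precisely the stationary supersolution condition. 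The comparison with subsolutions/the initial datum is then immediate: if $u_0\le V^{1/m}$, then $u$ is a supersolution of \eqref{eq1}, and Proposition \ref{thm:cp1} yields global existence with $0\le u(t)\le V^{1/m}$.

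For the case $p=m$: here the stationary condition becomes $-\Delta_M(u^m)\ge u^m$, i.e.\ $-\Delta_M v\ge v$ for $v=u^m$. Since $v$ is a ground state, $-\Delta_{\mathbb{H}_h^N}v=\lambda_1(\mathbb{H}_h^N)\,v=\tfrac{(N-1)^2}4h^2\,v$. The same radial comparison gives $-\Delta_M v\ge -\Delta_{\mathbb{H}_h^N}v=\tfrac{(N-1)^2}4h^2\,v$, so it suffices that $\tfrac{(N-1)^2}4h^2\ge1$, i.e.\ $h\ge 2/(N-1)$ — exactly the hypothesis imposed in this case. (This is why the extra curvature restriction appears only for $p=m$: there the reaction balances against the \emph{bottom of the spectrum}, whereas for $p>m$ the superlinearity of $V^{p/m}$ does the work.) I would note that in the $p=m$ case the pointwise sectional bound $\mathrm{K}_\omega\le -h^2$ is invoked rather than just the Ricci bound, since the Laplacian comparison for a general decreasing function is cleanest under the sharper sectional hypothesis.

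\textbf{The main obstacle} is making the transplantation and the Laplacian comparison rigorous: one must check that the radial drift inequality \eqref{eq13} combines correctly with the sign of the radial derivative to produce a genuine \emph{differential inequality} on all of $M$, and that the transplanted $v$ or $V$ retains enough regularity (they are smooth away from the pole, and one handles the pole $o$ via a Kato-type argument exactly as in the proof of Proposition \ref{supersol}, using that the radial derivative is nonpositive at $r=0$). A secondary subtlety is that the angular part $\Delta_{S_r}$ annihilates radial functions, so only the radial ODE comparison matters — but one should state explicitly that transplanting a radial function on $\mathbb{H}_h^N$ to $M$ yields a radial function on $M$, so that \eqref{eq3} reduces to the radial operator and the comparison is legitimate.
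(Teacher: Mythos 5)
Your proposal is correct and follows essentially the same route as the paper: reduce to the stationary inequality $-\Delta(u^m)\ge u^p$, use the radial monotonicity of $v$ (resp.\ $V$) together with the Laplacian comparison \eqref{eq13} to get $-\Delta_M v\ge -\Delta_{\mathbb{H}_h^N}v$, and then invoke $\lambda_1\ge \tfrac{(N-1)^2}{4}h^2\ge 1$ when $p=m$ (resp.\ the equation $-\Delta V=V^{p/m}$ when $p>m$), concluding via comparison with the initial datum. The only minor deviation is your closing remark on why the sectional (rather than Ricci) bound is assumed for $p=m$: under \eqref{H1} the Ricci bound \eqref{H3} already yields \eqref{eq13}, so the distinctive content of that hypothesis is really the quantitative requirement $h\ge 2/(N-1)$, exactly as your own computation shows.
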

\begin{proof}
By the properties of $v$ recalled above and Laplacian comparison \eqref{eq13} we compute:
\[\begin{aligned}-\Delta v&=-v''-\m(r,\theta)v'\ge-v''-(N-1)h\coth(hr)v'\\ &=-\Delta_{\mathbb{H}_h^n}v=\lambda_1(\mathbb{H}_h^n)\,v\ge v
\qquad \textrm{in } M\,,  \end{aligned}\]
where we have used the known bound (see \cite{M})
\[
\lambda_1 \geq \frac{(N-1)^2}{4}h^2\,
\]
and the running curvature assumption.

Since $p= m$, the function $u:= v^{\frac 1 m}$ is a positive stationary supersolution of equation \eqref{eq4}. In fact
\[
-\Delta u^m=-\Delta v\ge v= u^p.
\]

Clearly, $u$ is also a supersolution of
problem \eqref{eq1}, provided that $u_0\leq u = v^{\frac 1 m}$ in $M$. An essentially identical proof works for the case $p>m$ by replacing $v$ with $V$.
\end{proof}

\medskip \bf Proof of Theorem \ref{eigen p>m}\rm. We use comparison with the barrier constructed in Lemma \ref{supersol5} for solutions to approximating problems that involve homogeneous Dirichlet boundary conditions on balls of radius $R$ with $R\to+\infty$, see Proposition \ref{thm:cp1}. The bounds still hold in such limit. \hfill $\qed$


\subsection{Subsolutions}

We now provide some \emph{explicit} supersolutions from which the results of Theorems \ref{sub p>m}, \ref{sub p<m} will follow.

\begin{lem}\label{subsol1}
Let assumptions \eqref{H1}, \eqref{H2} hold and assume that $p>m$.
hold. Let $u_0\in L^{\infty}(M), u_0\geq 0$ with $ \operatorname{supp}\, u_0 \supset B_{R_0} $ for some $R_0>0$. Let
\[
\zeta(t):=(T-t)^{-\alpha}[-\log(T -t)]^{\frac{\beta}{m-1}}\,, \qquad \eta(t):=[-\log(T -t)]^{-\beta} \,\,\; \text{for every}\;\; t\in [0,T) \, ,
\]
with $ T \in (0,1) $ and
\begin{equation}\label{eq128}
\alpha > \frac 1 {m-1} \, , \ \beta >0 \qquad \text{or } \qquad \alpha=\frac 1{m-1} \, , \ 0<\beta\leq 1 \, .
\end{equation}
 Suppose that
\begin{equation}\label{eq100}
\frac{C^{m-1}}{a}\geq \max\left\{ \frac{\alpha  (m-1)}{m k \coth(k) (N-1)} \, , \, \frac{\alpha  (m-1) + \beta}{m C_0 (N-1)} \, , \, \tilde C \right\} ,
\end{equation}
\begin{equation}\label{eq101}
\frac{C^{m-1}}{a^2}\geq \frac{2 \beta (m-1)}{m}\,,
\end{equation}
for a suitable constant $\tilde C=\tilde C(p, m,N, k)>0$, and that $T = T(a, C, p, m, \alpha, \beta, N, k, C_0) \in (0,1)$ is small enough ($ C_0 $ is as in \eqref{eqC0}). Then the function $u$ defined in \eqref{eq14} is a weak subsolution of equation \eqref{eq4}. Moreover, if
\begin{equation}\label{eq103}
R_0 \geq a (-\log T)^\beta\,,\quad  u_0 \geq C T^{-\alpha}(-\log T)^{\frac{\beta}{m-1}} \quad \textrm{in}\;\; B_{R_0},
\end{equation}
then $u$ is also a subsolution of problem \eqref{eq1}.
\end{lem}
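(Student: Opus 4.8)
The plan is to mirror the structure of the supersolution lemmas (Lemmas \ref{supersol1}--\ref{supersol4}): I would verify that the prescribed $\zeta,\eta$ satisfy the three subsolution conditions \eqref{eq29}, \eqref{eq30}, \eqref{eq41} of Proposition \ref{subsol}, and then deal with the initial datum. It is convenient to set $s:=T-t\in(0,T]$ and $L:=-\log s>0$ (positive since $T<1$), so that $\zeta=s^{-\alpha}L^{\beta/(m-1)}$ and $\eta=L^{-\beta}$. A direct differentiation gives
\[
\zeta'=\alpha\,s^{-\alpha-1}L^{\frac{\beta}{m-1}}+\tfrac{\beta}{m-1}\,s^{-\alpha-1}L^{\frac{\beta}{m-1}-1},\qquad \frac{\eta'}{\eta}=-\frac{\beta}{s\,L},
\]
and the crucial observation is the cancellation
\[
\zeta'+\frac{\zeta}{m-1}\frac{\eta'}{\eta}=\alpha\,s^{-\alpha-1}L^{\frac{\beta}{m-1}},
\]
which clears the awkward logarithmic-derivative term from $\sigma$, from $\sigma_0$ and from $\zeta'$. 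Note also that $\eta$ is decreasing on $[0,T)$ with maximum $\eta(0)=(-\log T)^{-\beta}$, so the standing requirement \eqref{eq40c} holds as soon as $T$ is small. Throughout I recall that $\alpha m=\alpha+1$ precisely when $\alpha=1/(m-1)$ and $\alpha m>\alpha+1$ when $\alpha>1/(m-1)$; in either case $-\alpha m$ is the dominant $s$-power as $s\to0^+$.

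Using the above one computes the leading behaviour $\sigma\sim s^{-\alpha m}L^{\beta/(m-1)}$, $\delta\sim s^{-\alpha m}L^{\beta(2-m)/(m-1)}$ and $\gamma=C^{p-1}s^{-\alpha p}L^{\beta p/(m-1)}$. Conditions \eqref{eq100}--\eqref{eq101} are exactly what turn these into clean two-sided bounds: the first entry of \eqref{eq100} forces the diffusion coefficient in $\sigma$ to dominate the drift coefficient $\alpha$, giving $\sigma\le 2D\,s^{-\alpha m}L^{\beta/(m-1)}$ with $D:=\frac{C^{m-1}m}{a(m-1)}(N-1)k\coth(k)$; the second entry does the same for $\sigma_0-\delta_0=\zeta'+\frac{C^{m-1}m}{a(m-1)}(N-1)C_0\zeta^m\eta$; and \eqref{eq101} guarantees that the positive second term of $\delta$ beats its negative first term (the relevant ratio equals $\tfrac{C^{m-1}m}{a^2(m-1)\beta}\,s^{1-\alpha(m-1)}L^{1-\beta}$, which is $\ge2$ when $\alpha=1/(m-1)$, $0<\beta\le1$ and $T$ is small, and tends to $+\infty$ when $\alpha>1/(m-1)$), whence $\delta\ge\tfrac12\,\frac{C^{m-1}m}{a^2(m-1)^2}\,s^{-\alpha m}L^{\beta(2-m)/(m-1)}>0$.

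With these bounds in hand, each condition reduces to an inequality in which the reaction term $\gamma$, carrying the strictly larger $s$-power $s^{-\alpha p}$, overwhelms the diffusion/drift terms as $s\to0^+$. Indeed \eqref{eq30} becomes $\frac{2(m-1)D}{(p+m-2)C^{p-1}}\le s^{\alpha(m-p)}L^{\beta(p-1)/(m-1)}$ and \eqref{eq41} is analogous, both holding for $T$ small since $p>m$; the third entry $\tilde C$ of \eqref{eq100} fixes the residual structural constant so that these comparisons close uniformly in $s\in(0,T]$. The genuinely delicate point is the concave-envelope condition \eqref{eq29}: substituting the bounds and raising to the relevant powers, it reduces to comparing the $s$-exponents of $\sigma_+^{(p+m-2)/(p-1)}$ and of $\delta\gamma^{(m-1)/(p-1)}$, whose difference equals $\alpha(p-m)(m-1)/(p-1)>0$. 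This is the heart of the matter: it is exactly the strict inequality $p>m$ that makes the right-hand side dominate, so that \eqref{eq29} holds once $T$ is chosen small enough (depending on all the data through $a,C,\alpha,\beta,N,k,C_0,p$), the logarithmic factors being harmless against the polynomial gain in $s$. Invoking Proposition \ref{subsol} then yields that the function built from $u$ (regularised near the pole by $v$ of \eqref{eq34}, so that $w^m\in C^1$) is a weak subsolution of \eqref{eq4}.

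Finally, for the subsolution property of problem \eqref{eq1} I would only need the initial comparison $w(\cdot,0)\le u_0$. Since $[\,\cdot\,]_+^{1/(m-1)}\le1$, one has $w(\cdot,0)\le C\zeta(0)=C\,T^{-\alpha}(-\log T)^{\beta/(m-1)}$ everywhere, while the support of $w(\cdot,0)$ is contained in the ball of radius $a\,\eta(0)^{-1}=a(-\log T)^{\beta}$ (which exceeds $1$ for $T$ small, hence also absorbs the $B_1$ part). Thus the two requirements in \eqref{eq103}, namely $R_0\ge a(-\log T)^{\beta}$ and $u_0\ge C\,T^{-\alpha}(-\log T)^{\beta/(m-1)}$ on $B_{R_0}$, give exactly $w(\cdot,0)\le u_0$ on $M$, and the conclusion follows. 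The main obstacle, as noted, is organising the estimates for \eqref{eq29} so that the clean bounds from \eqref{eq100}--\eqref{eq101} combine with the strict $s$-power gain $\alpha(p-m)(m-1)/(p-1)$; everything else is a matter of checking that $T$ small makes the remaining polynomial-in-$s$, logarithmic-in-$L$ inequalities hold on the whole interval $(0,T]$.
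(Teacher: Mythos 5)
Your proposal is correct and follows essentially the paper's own proof: the same cancellation in $\zeta'+\frac{\zeta}{m-1}\frac{\eta'}{\eta}$, the same bounds $\sigma\le 2D\,s^{-\alpha m}L^{\beta/(m-1)}$, $\delta\ge\frac{C^{m-1}m}{2a^2(m-1)^2}s^{-\alpha m}L^{\beta(2-m)/(m-1)}$ and $\sigma_0-\delta_0\le\frac{2C^{m-1}m(N-1)C_0}{a(m-1)}s^{-\alpha m}L^{\beta/(m-1)}$ drawn from \eqref{eq100}--\eqref{eq101} and the case split \eqref{eq128}, then verification of \eqref{eq29}, \eqref{eq30}, \eqref{eq41}, Proposition \ref{subsol}, and the initial comparison giving \eqref{eq103}. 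The only (harmless) deviation is that you close \eqref{eq29} by shrinking $T$, exploiting the $s$-power gain $\alpha(m-1)(p-m)/(p-1)>0$, whereas the paper closes it uniformly in $s\le 1$ via the structural constant $\tilde C$ in \eqref{eq100}; both mechanisms rest on the strict inequality $p>m$, exactly as you identified.
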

\begin{proof}
We take $T=T(a,\beta)>0$ so small that \eqref{eq40c} is fulfilled. With the above choices of $ \zeta $ and $ \eta $, in view of \eqref{eq128} (for the moment we only use $ \alpha \ge \tfrac{1}{m-1} $), the first inequality of \eqref{eq100} and the fact that $ T \le 1 $, we have that (recall the definition of $ \sigma $ given by \eqref{eq28})
\begin{equation}\label{eq104}
\begin{aligned}
\sigma(t) = & \, \alpha (T-t)^{-\alpha-1} \, [-\log(T-t)]^{\frac{\beta}{m-1}} \\
 & \, + \frac{C^{m-1}m (N-1) k \coth(k) }{a(m-1)}\,(T-t)^{-\alpha m} \, [-\log(T-t)]^{\frac{\beta}{m-1}}\\
\leq & \, \frac{ 2C^{m-1}m (N-1) k \coth(k) }{a(m-1)}\,(T-t)^{-\alpha m} \, [-\log(T-t)]^{\frac{\beta}{m-1}} \, .
\end{aligned}
\end{equation}
Furthermore, upon recalling the definition of $ \delta $ given by \eqref{eq23}, thanks to \eqref{eq101} we obtain the estimate
\begin{equation}\label{eq105}
\begin{aligned}
\delta(t) = & - \frac{\beta}{m-1} \, (T-t)^{-\alpha-1} \, [-\log(T-t)]^{\frac{\beta}{m-1}-1} \\
& \, + \frac{C^{m-1}m}{a^2(m-1)^2} \, (T-t)^{-\alpha m} \, [-\log(T-t)]^{\frac{\beta(2-m)}{m-1}} \\
\geq & \, \frac{C^{m-1}m}{2 a^2(m-1)^2} \, (T-t)^{-\alpha m} \, [-\log(T-t)]^{\frac{\beta(2-m)}{m-1}}
\end{aligned}
\end{equation}
as long as $ T \in (0,1) $ is so small that
\[ (T-t)^{-\alpha-1} \, [-\log(T-t)]^{\frac{\beta}{m-1}-1} \leq  (T-t)^{-\alpha m} \, [-\log(T-t)]^{\frac{\beta(2-m)}{m-1}} \qquad \forall t \in (0,T) \,.\]
Note that such a choice of $T$ is always feasible thanks to \eqref{eq128}. Now set
\begin{equation}\label{eq112}
K_1:=\left( \frac{m-1}{p-2+m}\right)^{\frac{m-1}{p-1}} -\left(\frac{m-1}{p-2+m}\right)^{\frac{p-2+m}{p-1}} > 0 \,.
\end{equation}
Due to \eqref{eq104} and \eqref{eq105}, condition \eqref{eq29} is implied by
\begin{equation}\label{eq106}
\begin{aligned}
& \, 2 K_1^{\frac{p-1}{p-2+m}}\frac{ C^{m-1}m (N-1) k \coth(k) }{a(m-1)}(T-t)^{-\alpha m}[-\log(T-t)]^{\frac{\beta}{m-1}} \\
\leq \, & C^{\frac{(p-1)(m-1)}{p-2+m}}\left(\frac{C^{m-1}m}{2 a^2(m-1)^2}\right)^{\frac{p-1}{p-2+m}}  (T-t)^{\frac{-\alpha p(m-1) -\alpha m(p-1)}{p-2+m}}[-\log(T-t)]^{\frac{\beta}{m-1}} \quad \forall t \in (0,T) \, .
\end{aligned}
\end{equation}
Note that
\[\alpha m \leq \alpha p \, \frac{m-1}{p-2+m} + \alpha m \, \frac{p-1}{p-2+m}\]
if and only if
\[(p-m)(m-1)\geq 0 \, , \]
which trivially holds since $p>m$. Hence, in view of the third inequality in \eqref{eq100}, we have that \eqref{eq106} (and so \eqref{eq29}) is fulfilled: we point out that, for this purpose, the hypothesis $ p>m $ is essential (at $ p=m $ the dependence on $C$ and $a$ vanishes and there is no more degree of freedom to make \eqref{eq106} hold). Moreover, from \eqref{eq104} we deduce that \eqref{eq30} is satisfied whenever
\begin{equation}\label{e107bis}
\frac{2 m k \coth(k)  (N-1)}{p-2+m} \leq a \, C^{p-m} \, (T-t)^{-\alpha(p-m)} \, [-\log(T-t)]^{\frac{\beta(p-1)}{m-1}} \qquad \forall t \in (0,T) \, ,
\end{equation}
and to this aim it is enough to choose $T=T(a, C, p, m, \alpha, \beta, N, k)>0$ small enough.

Finally, thanks to the middle inequality in \eqref{eq100}, we have that (recall that $ \sigma_0 $ and $ \delta_0 $ are defined by \eqref{eq28b} and \eqref{eq28c}, respectively)
\[
\sigma_0(t) - \delta_0(t) \leq \frac{ 2C^{m-1}m (N-1) C_0}{a(m-1)} \, (T-t)^{-\alpha m} \, [-\log(T-t)]^{\frac{\beta}{m-1}} \, .
\]
We therefore deduce that inequality \eqref{eq41} is satisfied provided
\begin{equation}\label{eq107}
\frac{2^{\frac{p-2+m}{m-1}+1}m (N-1) C_0}{(m-1)} \leq a \, C^{p-m} \, (T-t)^{-\alpha(p-m)} \, [-\log(T-t)]^{\frac{\beta (p-1)}{m-1}} \qquad \forall t \in (0,T) \,.
\end{equation}
Similarly to \eqref{e107bis}, it is plain that \eqref{eq107} holds if $T=T(a, C, p, m, \alpha, \beta, N, C_0)>0$ is small enough. Since we have established that \eqref{eq29}, \eqref{eq30} and \eqref{eq41} hold, from Proposition \ref{subsol} we get that $u$ is a subsolution of equation \eqref{eq4}. Furthemore, \eqref{eq103} implies that
\begin{equation}\label{eq108}
u(x,0) \leq u_0(x) \quad \text{for all } x\in M \, ,
\end{equation}
so that $u$ is also a subsolution of problem \eqref{eq1}.
\end{proof}

\begin{lem}\label{subsol2}
Let assumptions \eqref{H1}, \eqref{H2} hold and suppose that $p>m$. Let $u_0\in L^{\infty}(M)$, $u_0\geq 0$ with $\operatorname{supp}\, u_0\supset B_{R_0}$ for some $R_0>0$.
Let
\[\zeta(t):=(T-t)^{-\alpha} \, , \quad \eta(t):=(T -t)^{\beta} \quad \text{for every } t\in [0, T) \, , \]
with
\begin{equation}\label{eq127}
\alpha > \frac 1{m-1} \, , \qquad 0<\beta\leq \frac{\alpha(m-1)-1}{2} \, .
\end{equation}
Suppose that
\eqref{eq100}--\eqref{eq101} hold, and that $T=T(a, C, p, m, \alpha, \beta, N, k, C_0) \in (0,1)$ is small enough ($ C_0 $ is as in \eqref{eqC0}). Then the function $u$ defined in \eqref{eq14} is a weak subsolution of equation \eqref{eq4}. Moreover, if
\begin{equation}\label{eq115}
R_0 \geq a T^{-\beta}\,,\quad  u_0 \geq C T^{-\alpha} \quad \text{in } B_{R_0} \, ,
\end{equation}
then $u$ is also a subsolution of problem \eqref{eq1}.
\end{lem}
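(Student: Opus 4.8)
The plan is to run exactly the scheme of the proof of Lemma \ref{subsol1}, since the standing hypotheses \eqref{eq100}--\eqref{eq101} are identical and only the profiles change: here $\zeta,\eta$ are \emph{pure powers} of $T-t$, with no logarithmic corrections. First I would fix $T=T(a,\beta)\in(0,1)$ so small that $\eta(t)=(T-t)^\beta\le a/2$ on $(0,T)$, which secures \eqref{eq40c}, and then the whole task reduces to verifying the three structural inequalities \eqref{eq29}, \eqref{eq30}, \eqref{eq41} of Proposition \ref{subsol} for these profiles, after which Proposition \ref{subsol} delivers the subsolution property and \eqref{eq115} upgrades it to a subsolution of \eqref{eq1}.

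Next I would record the elementary identities $\zeta'=\alpha(T-t)^{-\alpha-1}$, $\eta'/\eta=-\beta(T-t)^{-1}$, $\zeta^m\eta=(T-t)^{-\alpha m+\beta}$, $\zeta^m\eta^2=(T-t)^{-\alpha m+2\beta}$ and $\gamma=C^{p-1}(T-t)^{-\alpha p}$, which give
\[\sigma(t)=\Big(\alpha-\tfrac{\beta}{m-1}\Big)(T-t)^{-\alpha-1}+\tfrac{C^{m-1}m(N-1)k\coth(k)}{a(m-1)}(T-t)^{-\alpha m+\beta}\]
and
\[\delta(t)=-\tfrac{\beta}{m-1}(T-t)^{-\alpha-1}+\tfrac{C^{m-1}m}{a^2(m-1)^2}(T-t)^{-\alpha m+2\beta}.\]
The two bounds in \eqref{eq127} are precisely what make the leading diffusion terms dominate: $\alpha>\tfrac1{m-1}$ yields $-\alpha m+\beta<-\alpha-1$, while $\beta\le\tfrac{\alpha(m-1)-1}2$ yields $-\alpha m+2\beta\le-\alpha-1$, so for $T\le1$ the powers $(T-t)^{-\alpha m+\beta}$ and $(T-t)^{-\alpha m+2\beta}$ dominate $(T-t)^{-\alpha-1}$. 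Invoking the first inequality in \eqref{eq100} (which forces the $\sigma$-coefficient $\ge\alpha$) I would obtain the upper bound $\sigma(t)\le\tfrac{2C^{m-1}m(N-1)k\coth(k)}{a(m-1)}(T-t)^{-\alpha m+\beta}$, mirroring \eqref{eq104}; using \eqref{eq101} together with $\beta\le\tfrac{\alpha(m-1)-1}2$ I would obtain $\delta(t)\ge\tfrac{C^{m-1}m}{2a^2(m-1)^2}(T-t)^{-\alpha m+2\beta}$, mirroring \eqref{eq105}; and from the middle inequality in \eqref{eq100} I would get $\sigma_0(t)-\delta_0(t)\le\tfrac{2C^{m-1}m(N-1)C_0}{a(m-1)}(T-t)^{-\alpha m+\beta}$.

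With these bounds, inequalities \eqref{eq30} and \eqref{eq41} reduce, after inserting the estimates on $\sigma$ (resp. $\sigma_0-\delta_0$) and the form of $\gamma$, to inequalities of the shape $\mathrm{const}\le aC^{p-m}(T-t)^{-\alpha(p-m)-\beta}$, whose right-hand side blows up as $t\to T^-$ because $p>m$; their worst case is $t=0$, i.e. $T-t=T$, so taking $T$ small makes them hold, exactly as in \eqref{e107bis} and \eqref{eq107}. The crux is \eqref{eq29}: inserting the bounds on $\sigma,\delta,\gamma$ it becomes a comparison of $\mathrm{const}_L\,(T-t)^{e_L}$ with $\mathrm{const}_R\,(T-t)^{e_R}$, where $e_L=(-\alpha m+\beta)\tfrac{p-2+m}{p-1}$ and $e_R=-\alpha m+2\beta-\tfrac{\alpha p(m-1)}{p-1}$. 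A direct calculation collapses to the identity $e_R-e_L=\tfrac{(m-p)[\alpha(m-1)-\beta]}{p-1}$, which is strictly negative since $p>m$ and $\beta<\alpha(m-1)$; hence $(T-t)^{e_R}$ dominates, and for $T$ small (with prefactors controlled by the third bound $\tilde C$ in \eqref{eq100}) \eqref{eq29} holds. Proposition \ref{subsol} then certifies that the function in \eqref{eq14}, extended by $v$ on $B_1$ as in \eqref{eq43}, is a weak subsolution of \eqref{eq4}.

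Finally I would pass to problem \eqref{eq1} by checking $u(x,0)\le u_0(x)$ on $M$: since $u(x,0)=CT^{-\alpha}\big[1-\tfrac ra T^\beta\big]_+^{1/(m-1)}$ is supported in $\overline B_{aT^{-\beta}}$ and bounded there by $CT^{-\alpha}$, the conditions $R_0\ge aT^{-\beta}$ and $u_0\ge CT^{-\alpha}$ on $B_{R_0}$ in \eqref{eq115} give the inequality, so $u$ is a subsolution of \eqref{eq1}; I would also note that $\eta(t)=(T-t)^\beta\to0$ as $t\to T^-$ keeps the support compact on each $[0,S]$ with $S<T$, as required to apply Proposition \ref{thm:cp2}. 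I expect the only genuine obstacle to be the exponent bookkeeping for \eqref{eq29}, namely confirming the sign of $e_R-e_L$ and that the window \eqref{eq127} (especially $\beta\le\tfrac{\alpha(m-1)-1}2$) is exactly the one compatible with \eqref{eq101}; everything else is a routine transcription of the argument for Lemma \ref{subsol1}.
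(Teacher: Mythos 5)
Your proposal is correct and follows essentially the same route as the paper's proof: the same upper bound on $\sigma$, lower bound on $\delta$, and bound on $\sigma_0-\delta_0$ (the paper's \eqref{eq110}, \eqref{eq111} and the estimate following them), the same reduction of \eqref{eq30} and \eqref{eq41} to inequalities of the form $\mathrm{const}\le a\,C^{p-m}(T-t)^{-\alpha(p-m)-\beta}$ settled by taking $T$ small, and the same exponent bookkeeping for \eqref{eq29}, your identity $e_R-e_L=\frac{(m-p)\left[\alpha(m-1)-\beta\right]}{p-1}<0$ being exactly the paper's equivalence with $(p-m)\left[\alpha(m-1)-\beta\right]\ge 0$. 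The only negligible difference is that the paper closes \eqref{eq29} using solely the third bound $\tilde C$ in \eqref{eq100} (the exponent comparison already handles the powers of $T-t$ for any $T<1$), whereas you additionally invoke smallness of $T$; both mechanisms are valid.
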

\begin{proof}
We take $T=T(a,\beta) \in (0,1)$ so small that \eqref{eq40c} is fulfilled. In view of the first inequality in \eqref{eq100} and \eqref{eq127} (for the moment we only use the fact that $ \beta \le \alpha(m-1)-1 $), we have (recall that $\sigma$ is defined in \eqref{eq28})
\begin{equation}\label{eq110}
\begin{aligned}
\sigma(t)= & \, \alpha(T-t)^{-\alpha-1} +  \frac{C^{m-1}m (N-1) k \coth(k) }{a(m-1)} \, (T-t)^{-\alpha m +\beta}-\frac{\beta}{m-1} \, (T-t)^{-\alpha-1}\\
\leq &  \, \frac{2 C^{m-1}m (N-1) k \coth(k) }{a(m-1)} \, (T-t)^{-\alpha m +\beta} \, .
\end{aligned}
\end{equation}
Moreover, thanks to \eqref{eq101} and \eqref{eq127} (recall that $ \delta $ is defined in \eqref{eq23}),
\begin{equation}\label{eq111}
\begin{aligned}
\delta(t) = -\frac{\beta}{m-1}\,(T-t)^{-\alpha-1} + \frac{C^{m-1} m }{a^2 (m-1)^2}\,(T-t)^{-\alpha m +2 \beta} \geq \frac{C^{m-1}m}{2 a^2 (m-1)^2}\,(T-t)^{-\alpha m + 2\beta}\,;
\end{aligned}
\end{equation}
note that here we do need that $ \beta \le \tfrac{\alpha(m-1)-1}{2} $. Now let $K_1$ be defined as in \eqref{eq112}. By virtue of \eqref{eq110} and \eqref{eq111}, condition \eqref{eq29} is implied by
\begin{equation}\label{eq113}
\begin{aligned}
&  \, 2 K_1^{\frac{p-1}{p-2+m}} \, \frac{ C^{m-1}m (N-1) k \coth(k) }{a(m-1)} \, (T-t)^{-\alpha m+\beta}\\
\leq & \,  C^{\frac{(p-1)(m-1)}{p-2+m}}\left(\frac{C^{m-1}m}{2 a^2(m-1)^2}\right)^{\frac{p-1}{p-2+m}} (T-t)^{\frac{-\alpha p(m-1)+ (2\beta-\alpha m)(p-1)}{p-2+m}} \qquad \forall t \in (0,T) \, .
\end{aligned}
\end{equation}
Observe that
\[\alpha m - \beta \leq \alpha p \frac{m-1}{p-2+m} + (\alpha m - 2 \beta)\frac{p-1}{p-2+m} \]
holds if and only if
\[(p-m)[\alpha(m-1)-\beta]\geq 0 \, , \]
which is guaranteed since $p>m$ and \eqref{eq127} holds. Therefore, from the third inequality in \eqref{eq100} we infer that \eqref{eq113} is fulfilled: we point out, once again, that here it is essential that $ p>m $, for the same reasons as in the proof of Lemma \ref{subsol1}. On the other hand, from \eqref{eq110} we deduce that \eqref{eq30} is satisfied provided
\[
\frac{2 m k \coth(k)  (N-1)}{p-2+m} \leq a \, C^{p-m} \, (T-t)^{-\alpha(p-m)-\beta} \qquad \forall t \in (0,T) \, ;
\]
to this end, it suffices to pick $T=T(a, C, p, m, \alpha, \beta, N, k)>0$ small enough.

Furthermore (recall that $ \sigma_0 $ and $ \delta_0 $ are defined by \eqref{eq28b} and \eqref{eq28c}, respectively), thanks to the central inequality in \eqref{eq100} (actually here one can replace $ \beta $ with $0$ in such inequality), we deduce that
\[
\sigma_0(t) - \delta_0(t) \leq \frac{ 2C^{m-1}m (N-1) C_0}{a(m-1)} \, (T-t)^{-\alpha m + \beta}\,.
\]
We therefore infer that inequality \eqref{eq41} is met provided
\begin{equation}\label{eq114}
\frac{2^{\frac{p-2+m}{m-1}+1}m (N-1) C_0}{(m-1)} \leq a \, C^{p-m} \, (T-t)^{-\alpha(p-m)-\beta} \qquad \forall t \in (0,T) \,.
\end{equation}
It is apparent that \eqref{eq114} is satisfied if $T=T(a, C, p, m, \alpha, \beta, N, C_0)>0$ is small enough. Since \eqref{eq29}, \eqref{eq30} and \eqref{eq41} hold, from Proposition \ref{subsol} we get that $u$ is a subsolution of equation \eqref{eq4}. Finally, \eqref{eq115} yields \eqref{eq108}, so $u$ is also a subsolution of problem \eqref{eq1}.
\end{proof}

\bf Proof of Theorem \ref{sub p>m}\rm. We use comparison with the barriers constructed in Lemmas \ref{subsol1}, \ref{subsol2}, see Proposition \ref{thm:cp2}. This yield part \it i\rm) of the claim by using Lemma \ref{subsol1} and part \it ii\rm) of the claim by using Lemma \ref{subsol2}. \hfill $\qed$

\begin{lem}\label{subsol3}
Let assumptions \eqref{H1}, \eqref{H2} be satisfied. Let $u_0\in L^{\infty}(M), u_0\geq 0$ with $\operatorname{supp}\, u_0\supset B_{R_0}$ for some $R_0>0$. Suppose that
\begin{equation}\label{eq117}
1<p<m\,.
\end{equation}
Let
\[\zeta(t):=(\tau+t)^{\alpha}\,, \quad \eta(t):=(\tau +t)^{-\beta} \quad \text{for every } t\in [0, \infty) \, .\]
Suppose that \eqref{eq101} holds,
\begin{equation}\label{eq122b}
0<\alpha<\frac 1{m-1} \, , \qquad \beta=\frac{\alpha(m-1)+1}2 \, ,
\end{equation}
\begin{equation}\label{eq100b}
\frac{C^{m-1}}{a}\geq \max\left\{ \frac{\alpha  (m-1)}{m k \coth(k)  (N-1)} \, , \, \frac{\alpha  (m-1)}{m C_0 (N-1)}  \right\}
\end{equation}
and that $\tau=\tau(a, C, p, m, \alpha, \beta, N, k, C_0) \ge 1 $ is sufficiently large. Then the function $u$ defined in \eqref{eq14} is a weak subsolution of equation \eqref{eq4}. Moreover, if
\begin{equation}\label{eq126}
R_0 \geq a \tau^{\beta}, \qquad u_0 \geq C \tau^\alpha\quad \text{in}\;\; B_{R_0}\,,
\end{equation}
then $u$ is also a subsolution of problem \eqref{eq1}.
\end{lem}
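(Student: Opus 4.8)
The plan is to apply Proposition \ref{subsol}, so the whole task reduces to verifying the three structural inequalities \eqref{eq29}, \eqref{eq30} and \eqref{eq41} for the present choice $\zeta(t)=(\tau+t)^\alpha$, $\eta(t)=(\tau+t)^{-\beta}$. First I would fix $a,C$ subject to \eqref{eq101} and \eqref{eq100b} and then take $\tau\ge1$ large: this immediately secures the admissibility condition \eqref{eq40c}, since $\eta$ is decreasing and $\eta(0)=\tau^{-\beta}\to0$. Next I would substitute into the definitions \eqref{eq28}, \eqref{eq23}, \eqref{eq24}, \eqref{eq28b}, \eqref{eq28c} and record the resulting monomials in $(\tau+t)$. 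The decisive algebraic facts, both consequences of \eqref{eq122b}, are that the ``non-diffusive'' coefficient $\alpha-\tfrac{\beta}{m-1}=\tfrac12\big(\alpha-\tfrac1{m-1}\big)$ is \emph{negative} (because $\alpha<\tfrac1{m-1}$), and that the two exponents appearing in $\delta$ coincide, namely $\alpha m-2\beta=\alpha-1$.

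From these I would derive the working bounds. The negativity just noted lets me drop the lower-order term in $\sigma$, giving the one-sided estimate $\sigma(t)\le \tfrac{C^{m-1}m(N-1)k\coth(k)}{a(m-1)}\,(\tau+t)^{\alpha m-\beta}$ (so that $\sigma_+$ obeys the same bound); the coincidence of exponents together with \eqref{eq101} yields $\delta(t)\ge \tfrac{C^{m-1}m}{2a^2(m-1)^2}\,(\tau+t)^{\alpha-1}$; and $\gamma(t)=C^{p-1}(\tau+t)^{\alpha p}$ is exact. For the inner term I would use the second entry of \eqref{eq100b} and the elementary inequality $\alpha m-\beta>\alpha-1$ (the gap equals $\beta>0$) to absorb the positive lower-order part of $\sigma_0$, obtaining $\sigma_0(t)-\delta_0(t)\le \tfrac{2C^{m-1}m(N-1)C_0}{a(m-1)}\,(\tau+t)^{\alpha m-\beta}$.

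With $K_1$ as in \eqref{eq112}, each of \eqref{eq29}, \eqref{eq30}, \eqref{eq41} now collapses to a comparison of the form $\mathrm{const}_L\,(\tau+t)^{e_L}\le \mathrm{const}_R\,(\tau+t)^{e_R}$, and since $\tau+t\ge\tau$ all of them hold simultaneously once $\tau$ is large enough, \emph{provided} each exponent gap $e_R-e_L$ is strictly positive. For \eqref{eq30} and \eqref{eq41} the gap is $\alpha p-(\alpha m-\beta)=\beta-\alpha(m-p)$, which is positive because $\beta\ge\alpha(m-p)$ is equivalent to $\alpha(m-2p+1)\le1$, and the latter follows from $\alpha<\tfrac1{m-1}$ together with $p>1$. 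For the genuinely nonlinear condition \eqref{eq29} the computation is less transparent: comparing $e_L=(\alpha m-\beta)\tfrac{p-2+m}{p-1}$ with $e_R=(\alpha-1)+\tfrac{\alpha p(m-1)}{p-1}$ one finds, after clearing denominators, the clean identity $2(p-1)(e_R-e_L)=(p-m)\big[\alpha(m-1)-1\big]$.

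The main obstacle, and the point the whole argument pivots on, is precisely this last identity: the right-hand side is a product of the two negative quantities $p-m$ and $\alpha(m-1)-1$, hence strictly positive, so $e_R>e_L$. In other words the hypotheses $p<m$ and $\alpha<\tfrac1{m-1}$ are \emph{both} indispensable and conspire, through a sign cancellation, to let the diffusive term beat the reaction term over the infinite time horizon; this is the analogue here of the step ``$p>m$ is essential'' flagged in Lemmas \ref{subsol1}--\ref{subsol2}, with the roles of the inequalities reversed. Once the three exponent gaps are seen to be positive, Proposition \ref{subsol} yields that the function in \eqref{eq14} (suitably modified near $o$ as in \eqref{eq43}) is a weak subsolution of \eqref{eq4}. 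Finally, to pass to problem \eqref{eq1} I would check $u(\cdot,0)\le u_0$: on $\{r<a\tau^\beta\}$ one has $u(x,0)\le C\tau^\alpha$, so the two requirements in \eqref{eq126}, namely $R_0\ge a\tau^\beta$ and $u_0\ge C\tau^\alpha$ on $B_{R_0}$, give $u(\cdot,0)\le u_0$ everywhere and hence the claim.
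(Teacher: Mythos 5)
Your proposal is correct and takes essentially the same route as the paper: it reduces everything to Proposition \ref{subsol}, establishes the same power-law bounds $\sigma_+(t)\lesssim (\tau+t)^{\alpha m-\beta}$, $\delta(t)\gtrsim (\tau+t)^{\alpha-1}$ (via \eqref{eq101} and the exponent coincidence $\alpha m-2\beta=\alpha-1$), $\sigma_0(t)-\delta_0(t)\lesssim(\tau+t)^{\alpha m-\beta}$, and then closes \eqref{eq29}, \eqref{eq30}, \eqref{eq41} by showing each exponent gap is strictly positive and taking $\tau$ large; your identity $2(p-1)(e_R-e_L)=(p-m)\left[\alpha(m-1)-1\right]$ is exactly the paper's condition $(p-m)\left[\alpha(m-1)-\beta\right]>0$ rewritten through $\beta=\tfrac{\alpha(m-1)+1}{2}$. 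The only cosmetic difference is that you bound $\sigma$ by dropping the negative combined lower-order coefficient $\alpha-\tfrac{\beta}{m-1}<0$, whereas the paper absorbs $\zeta'$ using the first entry of \eqref{eq100b}, which costs it a harmless factor $2$.
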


\begin{proof}
We take $\tau^\beta \geq \frac 2 a$, so \eqref{eq40c} is fulfilled. In view of \eqref{eq122b}, $ \tau \ge 1 $ and the first inequality in \eqref{eq100b}, we have that (recall that $ \sigma $ is defined in \eqref{eq28})
\begin{equation}\label{eq121}
\begin{aligned}
\sigma(t)= & \, \alpha(\tau + t)^{\alpha-1} +  \frac{C^{m-1}m (N-1) k\coth(k)  }{a(m-1)}(\tau +t)^{\alpha m -\beta}-\frac{\beta}{m-1}(\tau+t)^{\alpha-1}\\
\leq & \, \frac{2 C^{m-1}m (N-1) k \coth(k) }{a(m-1)}(\tau+t)^{\alpha m -\beta} \, .
\end{aligned}
\end{equation}
Here we are only using the fact that $ \beta \le \alpha(m-1)+1 $. Moreover, thanks to \eqref{eq101}, \eqref{eq122b}, \eqref{eq100b} and $ \tau \ge 1 $ (recall that $ \delta $ is defined in \eqref{eq23}),
\begin{equation}\label{eq123}
\begin{aligned}
\delta(t) = \, & -\frac{\beta}{m-1}(\tau+t)^{\alpha-1} + \frac{C^{m-1} m }{a^2 (m-1)^2}(\tau+t)^{\alpha m -2 \beta} \\
\geq \, & \frac{C^{m-1}m}{2 a^2 (m-1)^2}(\tau+t)^{\alpha m - 2\beta}\,.
\end{aligned}
\end{equation}

Let $K_1$ be defined as in \eqref{eq112}. Due to \eqref{eq121} and \eqref{eq123}, condition \eqref{eq29} is implied by
\begin{equation}\label{eq124}
\begin{aligned}
& \, 2 K_1^{\frac{p-1}{p-2+m}}\frac{ C^{m-1}m (N-1) k \coth(k) }{a(m-1)}(\tau+t)^{\alpha m-\beta} \\
\leq & \, C^{\frac{(p-1)(m-1)}{p-2+m}}\left(\frac{C^{m-1}m}{2 a^2(m-1)^2}\right)^{\frac{p-1}{p-2+m}} (\tau+t)^{\frac{\alpha p(m-1)+ (\alpha m - 2\beta )(p-1)}{p-2+m}}  \qquad \forall t > 0 \, .
\end{aligned}
\end{equation}
Observe that
\[\alpha m - \beta < \alpha p \frac{m-1}{p-2+m} + (\alpha m - 2 \beta)\frac{p-1}{p-2+m} \]
holds if and only if
\[(p-m)[\alpha(m-1)-\beta]> 0 \, , \]
which is valid thanks to \eqref{eq117} and \eqref{eq122b}


It is therefore apparent that one can choose $ \tau(a, C, p, m, \alpha, \beta, N, k) \ge 1 $ sufficiently large to make \eqref{eq124} (and so \eqref{eq29}) hold. Note that here the extrema of the inequality ($ p=m $ or $ \beta = \alpha(m-1) $) have to be excluded, otherwise one does not have enough degrees of freedom on $C,a$ to make \eqref{eq124} hold).

Moreover, from \eqref{eq121} we deduce that  \eqref{eq30} is fulfilled whenever
\begin{equation}\label{eq131}
\frac{2 m k \coth(k)  (N-1)}{p-2+m} \leq a \, C^{p-m} \, (\tau+t)^{\alpha(p-m)+\beta} \qquad \forall t \ge 0 \, .
\end{equation}
From \eqref{eq122b} it follows that $\alpha(p-m)+\beta>0$. So, \eqref{eq131} is satisfied, providing again that $\tau(a, C, p, m, \alpha, \beta, N, k) \ge 1 $ is sufficiently large. Furthermore, thanks to the last inequality in \eqref{eq100b} (recall that $ \sigma_0 $ and $ \delta_0 $ are defined by \eqref{eq28b} and \eqref{eq28c}, respectively), we have that
\[
\sigma_0(t) -\delta_0(t) \leq \frac{ 2C^{m-1}m (N-1) C_0}{a(m-1)} \, (\tau+t)^{\alpha m - \beta}\,.
\]
Hence, from \eqref{eq122b} we infer that inequality \eqref{eq41} is satisfied provided
\begin{equation}\label{eq125}
\frac{2^{\frac{p-2+m}{m-1}+1}m (N-1) C_0}{(m-1)} \leq a \, C^{p-m} \, (\tau+t)^{\alpha(p-m)+\beta} \qquad \forall t \ge 1 \, .
\end{equation}
Clearly, \eqref{eq125} holds as long as $\tau(a, C, p, m, \alpha, \beta, N, C_0) \ge 1 $ is sufficiently large. Since \eqref{eq29}, \eqref{eq30} and \eqref{eq41} hold, from Proposition \ref{subsol} we get that $u$ is a subsolution of equation \eqref{eq4}. Finally, \eqref{eq126} yields \eqref{eq108}, so that $u$ is also a subsolution of problem \eqref{eq1}.

\smallskip

\end{proof}

\bf Proof of Theorem \ref{sub p<m}\rm. We use comparison with the barriers constructed in Lemma \ref{subsol3}, see Proposition \ref{thm:cp2}. This yields immediately the claim. \hfill $\qed$

\medskip

\textbf{Acknowledgements.} G.G.~was partially supported by the PRIN Project {``Equazioni alle derivate parziali di tipo ellittico e parabolico: aspetti geometrici, disuguaglianze collegate, e applicazioni''} (Italy). M.M. and F.P.~were partially supported by the GNAMPA Project ``Equazioni diffusive non-lineari in contesti non-Euclidei e disuguaglianze funzionali associate'' (Italy). All authors~have also been supported by the Gruppo Nazionale per l'Analisi Matematica, la Probabilit\`a e le loro Applicazioni (GNAMPA) of the Istituto Nazionale di Alta Matematica (INdAM, Italy).


\begin{thebibliography}{999}

\bibitem{ACP} D. Aronson, M.G. Crandall, L.A. Peletier, \emph{Stabilization of solutions of a degenerate nonlinear diffusion problem}, Nonlinear Anal. \bf 6 \rm (1982), 1001--1022.

\bibitem{BPT} C. Bandle, M.A. Pozio, A. Tesei, \em The Fujita exponent for the Cauchy problem in the hyperbolic space\rm,
J. Differential Equations \bf 251 \rm (2011), 2143--2163.

\bibitem{BGGV} M. Bonforte, F. Gazzola, G. Grillo, J.L. V\'azquez \em Classification of radial solutions to the Emden-Fowler equation on the hyperbolic space\rm,  Calc. Var. Partial Differential Equations \bf 46 \rm (2013), 375--401.

\bibitem{CFG} X. Chen, M. Fila, J.S. Guo, \emph{Boundedness of global solutions of a supercritical parabolic equation}, Nonlinear Anal. {\bf 68} (2008), 621--628.

\bibitem{D} E.B. Davies, ``Heat Kernels and Spectral Theory'', Cambridge Tracts in Mathematics, 92. Cambridge University Press, Cambridge, 1989.

\bibitem{DL}  K. Deng, H.A. Levine, \em The role of critical exponents in blow-up theorems: the sequel\rm, J. Math. Anal. Appl. \bf 243 \rm (2000), 85--126.

\bibitem{FI}  Y. Fujishima, K. Ishige, \em Blow-up set for type I blowing up solutions for a semilinear heat equation\rm, Ann. Inst. H. Poincaré Anal. Non Lin\'eaire \bf 31 \rm (2014), 231--247.

\bibitem{F} H. Fujita, \em On the blowing up of solutions of the Cauchy problem for $u_t=\Delta u+u^{1+\alpha}$\rm, J. Fac. Sci. Univ. Tokyo Sect. I \textbf{13} (1966), 109--124.

\bibitem{GV} V.A. Galaktionov, J.L. V\'azquez, \em Continuation of blowup solutions of nonlinear heat equations in several dimensions\rm, Comm. Pure Appl. Math. \bf 50 \rm (1997), 1--67.

\bibitem{GW} R.E. Greene, H. Wu, ``Function Theory on Manifolds Which Possess a Pole'', Lecture Notes in Mathematics, 699. Springer, Berlin, 1979.

\bibitem{Grig} A. Grigor'yan, \emph{Analytic and geometric background of recurrence and non-explosion of the Brownian motion on Riemannian manifolds}, Bull.~Amer.~Math.~Soc. {\bf 36} (1999), 135--249.

\bibitem{Grig3} A. Grigor'yan, ``Heat Kernel and Analysis on Manifolds'', AMS/IP Studies in Advanced Mathematics, 47, American Mathematical Society, Providence, RI; International Press, Boston, MA, 2009.

\bibitem{GMhyp} G. Grillo, M. Muratori, \emph{Radial fast diffusion on the hyperbolic space}, Proc.~Lond.~Math.~Soc. {\bf 109} (2014), 283--317.

\bibitem{GM2} G. Grillo, M. Muratori, \emph{Smoothing effects for the porous medium equation on Cartan-Hadamard manifolds}, Nonlinear Anal. {\bf 131} (2016), 346--362.

\bibitem{GMPbd} G. Grillo, M. Muratori, F. Punzo, {\em The porous medium equation with large initial data on negatively curved Riemannian manifolds}, to appear on J. Math. Pures Appl., \url{https://doi.org/10.1016/j.matpur.2017.07.021}.

\bibitem{GMPrm} G. Grillo, M. Muratori, F. Punzo, \emph{The porous medium equation with measure data on negatively curved Riemannian manifolds}, accepted for publication on J. Eur. Math. Soc. (JEMS), preprint arXiv: \url{http://arxiv.org/pdf/1507.08883}.

\bibitem{GMV} G. Grillo, M. Muratori, J.L. V\'azquez, \emph{The porous medium equation on Riemannian manifolds with negative curvature. The large-time behaviour}, Adv. Math. 314 (2017), 328--377.

\bibitem{H} K. Hayakawa, \em On nonexistence of global solutions of some semilinear parabolic differential equations\rm, Proc. Japan Acad. \textbf{49} (1973), 503--505.

\bibitem{I} K. Ishige, \em An intrinsic metric approach to uniqueness of the positive Dirichlet problem for parabolic equations in cylinders\rm, J. Differential Equations \bf 158 \rm(1999), 251--290.

\bibitem{I2} K. Ishige, \em An intrinsic metric approach to uniqueness of the positive Cauchy-Neumann problem for parabolic equations\rm, J. Math. Anal. Appl. \bf 276 \rm (2002), 763--790.

\bibitem{IM} K. Ishige, M. Murata, \em Uniqueness of nonnegative solutions of the Cauchy problem for parabolic equations on manifolds or domains\rm, Ann. Scuola Norm. Sup. Pisa Cl. Sci. {\bf 30} (2001), 171--223.

\bibitem{L} H.A. Levine, \em The role of critical exponents in blow-up theorems\rm, SIAM Rev. \bf 32 \rm (1990), 262--288.


\bibitem{MS} G. Mancini, K. Sandeep, \em On a semilinear elliptic equation in $\mathbb{H}^n$\rm, Ann. Scuola Norm. Sup. Pisa Cl. Sci. \bf 7 \rm(2008), 635--671.

\bibitem{MMP} P. Mastrolia, D.D. Monticelli, F. Punzo, \emph{Nonexistence of solutions to parabolic differential inequalities with a potential on Riemannian manifolds}, Math. Ann. \textbf{367} (2017), 929--963.

\bibitem{M} H.P. McKean, \em An upper bound to the spectrum of $\Delta$ on a manifold of negative curvature\rm, J. Differential
Geometry, \bf 4 \rm (1970), 359--366.

\bibitem{MP} E.L. Mitidieri, S.I. Pohozaev, \emph{A priori estimates and the absence of solutions of nonlinear partial differential equations and inequalities}, Tr. Mat. Inst. Steklova \textbf{234} (2001), 1--384; translation in Proc. Steklov Inst. Math. \textbf{234} (2001), 1--362.

\bibitem{MP2} E.L. Mitidieri, S.I. Pohozaev, \emph{Towards a unified approach to nonexistence of solutions for a class of differential inequalities}, Milan J. Math. {\bf 72} (2004), 129--162.

\bibitem{MQV} N. Mizoguchi, F. Quir\'os, J.L. V\'azquez, \em Multiple blow-up for a porous medium equation with reaction\rm,
Math. Ann. \bf 350 \rm (2011), 801--827.



\bibitem{PZ1} L.A. Peletier, J. Zhao, \emph{Source-type solutions of the porous media equation with absorption: the fast diffusion case}, Nonlinear Anal. \textbf{14} (1990), 107--121.

\bibitem{PZ2} L.A. Peletier, J. Zhao, \emph{Large time behaviour of solutions of the porous media equation with absorption: the fast diffusion case}, Nonlinear Anal. \textbf{17} (1991), 991--1009.

\bibitem{PT} S.I. Pohozaev, A. Tesei, \emph{Blow-up of nonnegative solutions to quasilinear parabolic inequalities}, Atti Accad. Naz. Lincei Cl. Sci. Fis. Mat. Natur. Rend. Lincei Mat. Appl. \textbf{11} (2000), 99--109.



\bibitem{Pu3} F. Punzo, {\it Blow-up of solutions to semilinear parabolic equations on Riemannian manifolds with negative sectional curvature}, J. Math. Anal. Appl. {\bf 387} (2012), 815--827.

\bibitem{Pu1} F.  Punzo, {\it Support  properties  of  solutions  to nonlinear  parabolic  equations  with variable density in the hyperbolic space}, Discrete Contin. Dyn. Syst. Ser. S \textbf{5} (2012), 657--670.

\bibitem{PuAA} F. Punzo, {\it Uniqueness and non-uniqueness of solutions to quasilinear parabolic equations with a singular coefficient on weighted Riemannian manifolds}, Asymptot. Anal. {\bf 79} (2012), 273--301.

\bibitem{Pu2} F. Punzo, {\it Well-posedness of the Cauchy problem for nonlinear parabolic equations with  variable  density  in  the  hyperbolic  space}, NoDEA Nonlinear Differential Equations Appl. \textbf{19} (2012), 485--501.


\bibitem{Q} P. Quittner, {\it The decay of global solutions of a semilinear heat equation}, Discrete Contin. Dyn. Syst. {\bf 21} (2008), 307--318.

\bibitem{Sacks} P.A. Sacks, {\it Global beahvior for a class of nonlinear evolution equations}, SIAM J. Math. Anal. {\bf 16} (1985), 233--250.

\bibitem{SGKM}  A.A. Samarskii, V.A. Galaktionov, S.P. Kurdyumov, A.P. Mikhailov, ``Blow-up in Quasilinear Parabolic Equations'', De Gruyter Expositions in Mathematics, 19. Walter de Gruyter \& Co., Berlin, 1995.

\bibitem{S} P. Souplet, \emph{Morrey spaces and classification of global solutions for a supercritical semilinear heat equation in
$\mathbb R^n$}, J. Funct. Anal. {\bf 272} (2017), 2005--2037.

\bibitem{Vaz1} J.L. V\'azquez, {\it The problems of blow-up for nonlinear heat equations. Complete blow-up and avalanche formation},  Atti Accad. Naz. Lincei Cl. Sci. Fis. Mat. Natur. Rend. Lincei Mat. Appl. \textbf{15} (2004), 281--300.

\bibitem{Vaz07} J.L. V\'azquez, ``The Porous Medium Equation. Mathematical Theory'', Oxford Mathematical Monographs. The Clarendon Press, Oxford University Press, Oxford, 2007.

\bibitem{VazH} J.L. V\'azquez, {\it Fundamental solution and long time behavior of the porous medium equation in hyperbolic space}, J. Math. Pures Appl. {\bf 104} (2015), 454--484.

\bibitem{WY} Z. Wang, J. Yin, {\it A note on semilinear heat equation in hyperbolic space}, J. Differential Equations {\bf 256} (2014), 1151--1156.

\bibitem{WY2} Z. Wang, J. Yin, {\it Asymptotic behaviour of the lifespan of solutions for a semilinear heat equation in hyperbolic space}, Proc. Roy. Soc. Edinburgh Sect. A {\bf 146} (2016) 1091--1114.

\bibitem{W} F.B. Weissler, \em $L^p$-energy and blow-up for a semilinear heat equation\rm, Proc. Sympos. Pure Math. \bf 45 \rm (1986), 545--551.

\bibitem{Xin} Y.L. Xin, ``Geometry of Harmonic Maps'', Progress in Nonlinear Differential Equations and their Applications, 23. Birkh\"{a}user Boston, Inc., Boston, MA, 1996.

\bibitem{Y} E. Yanagida, \emph{Behavior of global solutions of the Fujita equation}, Sugaku Expositions {\bf 26} (2013), 129--147.

\bibitem{Z} Q.S. Zhang, \em Blow-up results for nonlinear parabolic equations on manifolds\rm, Duke Math. J. \bf 97 \rm (1999), 515--539.

\end{thebibliography}
\end{document}